\newtheorem{theorem}{Theorem}[section]
\newtheorem{lemma}[theorem]{Lemma}
\newtheorem{proposition}[theorem]{Proposition}
\newtheorem{corollary}[theorem]{Corollary}
\theoremstyle{remark}
\newtheorem{remark}[theorem]{Remark}
\theoremstyle{definition}
\newtheorem{definition}[theorem]{Definition}
\newcommand{\supp}{\operatorname{supp}}
\newcommand{\abs}[1]{\left\lvert #1 \right\rvert}
\newcommand{\relmiddle}[1]{\mathrel{}\middle#1\mathrel{}}
\numberwithin{equation}{section}
\title{Well-posedness of the Langmuir film problem}
\renewcommand\@date{{%
  \vspace{-\baselineskip}%
  \large\centering
  \begin{tabular}{@{}c@{}}
    Yoichiro Mori\textsuperscript{1} \\
    \normalsize y1mori@sas.upenn.edu
  \end{tabular}%
  \quad\quad
  \begin{tabular}{@{}c@{}}
    Shinya Okabe\textsuperscript{2} \\
    \normalsize shinya.okabe@tohoku.ac.jp
  \end{tabular}%
  \quad\quad
  \begin{tabular}{@{}c@{}}
    Koya Sakakibara\textsuperscript{3,4} \\
    \normalsize ksakaki@se.kanazawa-u.ac.jp
  \end{tabular}

  \bigskip

  \textsuperscript{1}Department of Mathematics, University of Pennsylvania\par
  \textsuperscript{2}Mathematical Institute, Tohoku University\par
  \textsuperscript{3}Faculty of Mathematics and Physics, Institute of Science and Engineering, Kanazawa University\par
  \textsuperscript{4}RIKEN iTHEMS
}}
\begin{document}

\maketitle

\begin{abstract}
    We analyze the inviscid Langmuir layer–Stokesian subfluid (ILLSS) model for two-phase Langmuir monolayers coupled to a Stokes flow in the underlying subfluid. Eliminating the bulk variables, we reformulate the coupled three-dimensional system as an evolution on the film involving the Dirichlet-to-Neumann (DtN) operator. We identify the Fourier symbol of the DtN operator and show it coincides with that of the fractional Laplacian, which yields an explicit Fourier-multiplier representation and allows construction of the corresponding fundamental solution. Using this representation we express the surface velocity as a convolution of the fundamental solution with the interfacial curvature forcing and analyze its normal limit to derive a boundary integral equation for the moving curve. Independently, exploiting the DtN representation we establish a curve-shortening identity: the interfacial perimeter decreases monotonically and its time derivative is controlled by $\dot{H}^{1/2}(\mathbb{R}^2)$-norm of the surface velocity. Building on the boundary integral equation, we prove local well-posedness via maximal $L^2$-regularity for quasilinear parabolic systems, employing a DeTurck-type reparametrization, and show equivalence with the original ILLSS system. Finally, we introduce a linearly implicit parametric finite-element scheme which captures experimentally observed relaxation dynamics.
\end{abstract}

\section{Introduction}
\label{sec:intro}

Langmuir monolayers are molecularly thin films formed at the air--water interface; they consist of amphiphilic molecules which self-assemble into ordered structures. Since the pioneering works of Langmuir~\cite{langmuir1917constitution} and Blodgett~\cite{blodgett1935films}, these systems have provided a canonical setting for the study of interfacial phenomena in soft condensed matter. Their relevance encompasses both biophysical problems—such as lipid phase behavior, membrane protein adsorption, and drug--membrane interactions~\cite{dynarowicz2024advantages,pereira2021recent,rojewska2021langmuir}—and technological applications including nanolithography, biosensing, and the fabrication of functional coatings via Langmuir--Blodgett transfer techniques~\cite{lin2024controllable}.

From the viewpoint of continuum mechanics, Langmuir monolayers exhibit nontrivial mesoscale dynamics. In two-phase regimes, domains of nearly uniform surface density are separated by sharp interfaces endowed with line tension, and the resulting interfacial motion is driven by an interplay of curvature forces, long-range dipolar interactions, and viscous dissipation in the subfluid. Experimental techniques such as Brewster-angle microscopy and fluorescence imaging permit the visualization of domain morphology and kinetics at micron length scales~\cite{henon1991microscope,olafsen2010experimental}, revealing morphologies that include circular droplets, stripes, labyrinthine patterns, and elongated (bola-shaped) domains.

To capture these phenomena, Alexander and collaborators proposed the inviscid Langmuir layer--Stokesian subfluid (ILLSS) model~\cite{alexander2007domain}. The model couples a sharp-interface evolution on the film to a three-dimensional Stokes flow in the subphase: the monolayer is modeled as an incompressible, inviscid two-dimensional medium while dissipation is produced exclusively by the Newtonian subfluid. The resulting mathematical formulation is a nonlocal free-boundary problem with hierarchical coupling across spatial dimensions (3D bulk, 2D surface, 1D interface).

Throughout this paper we denote the subfluid by
\[
    B=\mathbb{R}^2\times(-\infty,0),
\]
with planar boundary $\partial B \simeq \mathbb{R}^2$. The evolving interface is a smooth closed curve $\Gamma(t)\subset\partial B$. The governing equations read
\begin{subnumcases}{}
    -\triangle v + \nabla q = 0,\quad \nabla \cdot v = 0, & in $B$, \label{eq:Stokes_subfluid} \\
    v_3 = 0,\quad v_\parallel|_{\partial B} = u, & on $\partial B\setminus\Gamma$, \label{eq:Langmuir_incompressibility} \\
    \nabla p = -\partial_{x_3} v_\parallel|_{\partial B},\quad \nabla \cdot u = 0, & on  $\partial B \setminus \Gamma$, \label{eq:velocity_surface} \\
    \llbracket u\cdot\nu \rrbracket = 0,\quad \llbracket p \rrbracket = \kappa,\quad U = u \cdot \nu, & on $\Gamma$, \label{eq:jump_condition}
\end{subnumcases}
where $v=(v_1,v_2,v_3)^\top$ and $q$ denote the velocity and pressure in the subfluid, $u=(u_1,u_2)^\top$ and $p$ denote the surface velocity and surface pressure on $\partial B$, and $v_\parallel=(v_1,v_2)^\top$ is the tangential trace of $v$. The unit normal to $\Gamma$ pointing outward from the enclosed domain is denoted by $\nu$, the curvature of $\Gamma$ by $\kappa$, and the normal velocity of $\Gamma$ by $U$.
For a scalar field $f$ defined in a neighborhood of $\Gamma$ we adopt the convention
\[
    \llbracket f\rrbracket(x)\coloneqq \lim_{\varepsilon\searrow0}\bigl(f(x-\varepsilon\nu(x)) - f(x+\varepsilon\nu(x))\bigr),
    \qquad x\in\Gamma,
\]
so that $\llbracket f\rrbracket=f_--f_+$, where $f_\pm$ denote the traces from the interior/exterior sides of $\Gamma$. See Figure~\ref{fig:ILLSS} for the schematic setup of the ILLSS model.

\begin{figure}
    \centering
    \includegraphics[width=0.5\linewidth]{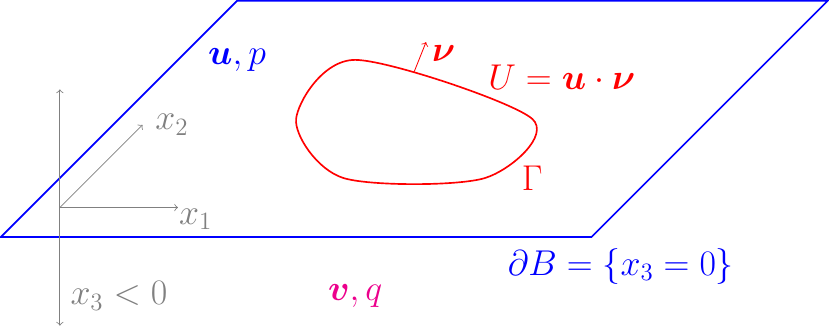}
    \caption{A schematic diagram of the ILLSS model.}
    \label{fig:ILLSS}
\end{figure}

We emphasize two significant differences between our formulation \eqref{eq:Stokes_subfluid}--\eqref{eq:jump_condition} and the original model proposed by Alexander et al.~\cite{alexander2007domain}.
First, regarding the boundary condition \eqref{eq:Langmuir_incompressibility}, the original work implicitly assumed the equality $v_\parallel = u$ on the entire boundary $\partial B$. In contrast, we explicitly restrict this condition to $\partial B \setminus \Gamma$, acknowledging the potential singularity of the velocity field at the interface $\Gamma$.
Second, and more importantly, regarding the jump condition \eqref{eq:jump_condition}, the original model imposed the continuity of the full velocity vector, i.e., $\llbracket u \rrbracket = 0$. However, since the Langmuir film is modeled as an inviscid fluid, the tangential velocity should satisfy a slip condition rather than continuity. Therefore, we adopt the condition $\llbracket u \cdot \nu \rrbracket = 0$, which allows for a discontinuity in the tangential component (i.e., a vortex sheet) across the interface. This modification is crucial for the physical consistency of the inviscid model.

Although the ILLSS model is concise in its formulation, it has proven effective in reproducing experimentally observed relaxation dynamics. Nevertheless, rigorous mathematical analysis of the ILLSS system---and, more generally, of problems featuring hierarchical coupling between bulk, surface, and interfacial dynamics---remains largely undeveloped. Establishing a firm analytical foundation for such models is therefore both mathematically interesting and relevant for a systematic understanding of interfacial hydrodynamics.

To clarify the mathematical positioning of the ILLSS model, it is instructive to compare it with two well-studied free boundary problems governing the motion of fluid interfaces: the classical Hele-Shaw problem and the quasistationary Stokes problem. In what follows, let $\Omega_-$ denote the bounded interior domain enclosed by the moving interface $\Gamma$, and let $\Omega_+$ denote the unbounded exterior domain. These models can be categorized based on the differential order of the equations governing the velocity field and the resulting geometric evolution of the interface.

First, the classical one-phase Hele-Shaw problem is governed by Darcy's law:
\begin{equation*}
    \begin{dcases*}
        u + \nabla p = 0,\quad \nabla\cdot u = 0,&in $\Omega_-$,\\
        p = \kappa&on $\Gamma$.
    \end{dcases*}
\end{equation*}
Here, the velocity equation $u = -\nabla p$ involves no derivatives of the velocity field (a 0-th order problem). The normal velocity of the interface is given by $U = u \cdot \nu = -\partial_\nu p$. Using the Dirichlet-to-Neumann operator $\Lambda_{\mathrm{DN}}$ associated with the harmonic pressure, this can be expressed as $U = -\Lambda_{\mathrm{DN}} \kappa$. Since $\Lambda_{\mathrm{DN}}$ is a first-order operator and the curvature $\kappa$ involves second-order spatial derivatives, the resulting geometric evolution is of the third order, known as the Mullins--Sekerka flow \cite{chen1993hele, constantin1993global, escher1997classical}.

On the other hand, the quasistationary Stokes problem (often referred to as viscous sintering) is governed by:
\begin{equation*}
    \begin{dcases*}
        -\triangle u + \nabla p = 0, \quad \nabla \cdot u = 0,&in $\Omega_-$,\\
        (\nabla u + (\nabla u)^\top - pI)\nu = \kappa\nu&on $\Gamma$.
    \end{dcases*}
\end{equation*}
Here, the momentum equation involves the Laplacian of the velocity (a 2nd-order problem). The interface velocity $U$ is determined by the balance of viscous stress and surface tension. In terms of mapping properties, the solution operator maps the curvature force to velocity roughly as the inverse of the Laplacian (modulo boundary traces), i.e., $U \approx \Lambda_{\mathrm{DN}}^{-1} \kappa$. Consequently, the interface evolution behaves like a first-order parabolic equation \cite{hopper1990plane, prokert1995existence}.

The ILLSS model discussed in this paper lies mathematically between these two regimes. As we will see in our reformulation (specifically \eqref{eq:Langmuir-DtN}), the system is governed essentially by:
\begin{equation*}
    \begin{dcases*}
        \Lambda_{\mathrm{DN}} u + \nabla p = 0, \quad \nabla \cdot u = 0,&in $\Omega_\pm$,\\
        \llbracket u \cdot \nu \rrbracket = 0,\quad\llbracket p \rrbracket = \kappa&on $\Gamma$.
    \end{dcases*}
\end{equation*}
The momentum equation involves the DtN operator (or fractional Laplacian $(-\triangle)^{1/2}$ \cite{caffarelli2007extension}), making it a 1st-order problem for the velocity. The resulting normal velocity takes the form $U = c\,\kappa + \text{nonlocal terms}$. Since the leading term is the curvature itself, the geometric evolution is of the second order. Structurally, this resembles the area-preserving curve shortening flow (or volume-preserving mean curvature flow) \cite{gage1986area, huisken1987volume, escher1998volume}, where the motion is driven by curvature but constrained by a nonlocal term arising from global conservation laws.

Thus, the ILLSS model fills the gap between the Hele-Shaw problem and the Stokes problem, offering a rich mathematical structure that exhibits properties of both local curvature flow and nonlocal hydrodynamic interactions.

The principal contributions of the present work are as follows.
\begin{enumerate}
    \item \textbf{Derivation of the boundary integral equation:}
    We rigorously reduce the coupled 3D-2D-1D system to a 1D geometric evolution equation on the interface $\Gamma$. This is achieved by identifying the Dirichlet-to-Neumann (DtN) operator for the subfluid Stokes flow with the fractional Laplacian $(-\Delta)^{1/2}$. We construct the explicit fundamental solution $E$ associated with this operator and represent the surface velocity as a convolution of $E$ with the interfacial curvature forcing. Analyzing the normal limits of this representation, we derive a closed boundary integral equation governing the motion of the curve.

    \item \textbf{Curve-shortening property:}
    Exploiting the Fourier multiplier representation of the DtN operator, we establish a curve-shortening identity. We prove that the rate of change of the interfacial perimeter is not positive and is precisely controlled by the $\dot H^{1/2}(\mathbb{R}^2)$-seminorm of the surface velocity. This provides a quantitative link between geometric relaxation and fractional hydrodynamic dissipation.

    \item \textbf{Local well-posedness via maximal regularity:}
    We prove the local well-posedness of the derived boundary integral equation in the class of smooth curves. To overcome the tangential degeneracy inherent in geometric evolution equations, we employ a DeTurck-type reparametrization, transforming the system into a quasilinear parabolic equation. We then establish maximal $L^2$-regularity for the linearized operator with variable coefficients and apply the contraction mapping principle in the intersection space $\mathbb{E}(0,T) = H^1(0,T;H^1(\mathbb{S}^1)) \cap L^2(0,T;H^3(\mathbb{S}^1))$.

    \item \textbf{Instantaneous smoothing and equivalence:}
    We prove that the solution $\gamma(t)$ becomes instantly $C^\infty$ in both space and time for $t > 0$. The proof relies on a ``parameter trick'' (time scaling) combined with the implicit function theorem and a bootstrap argument for elliptic regularity. Based on this regularity and the uniqueness of the classical solution to the elliptic problem, we rigorously reconstruct the full solution $(v, q, u, p)$ of the original ILLSS model from the solution of the boundary integral equation, thereby establishing the equivalence between the two formulations.

    \item \textbf{Parametric finite-element scheme:}
    We introduce a linearly implicit parametric finite-element scheme for the boundary integral equation. We demonstrate that the numerical results robustly capture the relaxation dynamics observed experimentally, including the evolution of complex bola-shaped domains, and are consistent with the geometric properties derived in our analysis.
\end{enumerate}

The logical structure of this paper establishes the equivalence between the ILLSS model and the derived boundary integral equation as follows.
In Section~\ref{sec:uniqueness}, we establish the uniqueness of the classical solution to the stationary elliptic problem governing the bulk and surface flow. This ensures that if a solution exists, it is unique.
In Section~\ref{sec:ILLSS-to-BIE}, we explicitly construct a candidate solution using the fundamental solution of the DtN operator and derive the boundary integral equation.
Section~\ref{sec:preliminaries} provides the necessary analytical preliminaries.
Section~\ref{sec:well-posedness} establishes the local well-posedness and, crucially, the $C^\infty$-regularity of the solution to the boundary integral equation.
Finally, in Subsection~\ref{subsec:equivalence}, we close the logical loop: the smoothness of the boundary integral equation solution guarantees that our constructed candidate satisfies the regularity requirements of the classical solution defined in Section~\ref{sec:uniqueness}, thereby establishing the strict equivalence between the boundary integral equation and the original ILLSS model via the uniqueness theorem.
Section~\ref{sec:numerics} presents numerical experiments.

\section{Uniqueness of the solution to elliptic problems}
\label{sec:uniqueness}

Given a Jordan curve $\Gamma \subset \partial B$ of class $C^2$, we consider the following elliptic problem:
\begin{subnumcases}{}
    -\triangle v + \nabla q = 0,\quad \nabla \cdot v = 0,& in $B$,\label{eq:Stokes}\\
    v_3 = 0,\quad v_\parallel = u,& in $\partial B \setminus \Gamma$,\label{eq:DBC}\\
    \nabla p = -\partial_{x_3}v_\parallel,\quad \nabla \cdot u = 0,& in $\partial B \setminus \Gamma$,\label{eq:Langmuir}\\
    \llbracket u \cdot \nu \rrbracket = 0,\quad \llbracket p \rrbracket = g,& on $\Gamma$,\label{eq:jump}
\end{subnumcases}
where $g$ is a given pressure jump along $\Gamma$. In this section, we establish the uniqueness of classical solutions to the system \eqref{eq:Stokes}--\eqref{eq:jump}.

\subsection{Definition of classical solution}

We first define the notion of a classical solution. We require sufficient regularity for the equations to be satisfied pointwise within each subdomain. To accommodate the jump conditions on the interface, in particular the slip condition, we allow the surface fields $u$ and $p$ to be discontinuous across $\Gamma$, requiring only that they be continuously differentiable up to the boundary $\Gamma$ from each side ($\Omega_-$ and $\Omega_+$). Furthermore, specific decay conditions at infinity are imposed to ensure the validity of integration by parts.

\begin{definition}
    \label{def:classical}
    Let $g \in C^0(\Gamma)$ be a given scalar function representing the magnitude of the normal stress jump. We assume that $g$ satisfies the zero net force condition:
    \begin{align*}
        \int_\Gamma g(y)\nu(y) \,\mathrm{d}s(y) = 0.
    \end{align*}
    A quadruplet $(v,q,u,p)$ is called a \textit{classical solution} to the problem~\eqref{eq:Stokes}--\eqref{eq:jump} if it satisfies the following conditions:
    \begin{enumerate}
        \item \textbf{Regularity.} The functions possess the following smoothness properties:
        \begin{alignat*}{2}
            v &\in C^2(B;\mathbb{R}^3) \cap C^1(\overline{B}\setminus\Gamma; \mathbb{R}^3), &\qquad
            q &\in C^1(B)\cap C^0(\overline{B}\setminus\Gamma), \\
            u &\in C^1(\overline{\Omega_-};\mathbb{R}^2) \cap C^1(\overline{\Omega_+};\mathbb{R}^2), &\qquad
            p &\in C^1(\overline{\Omega_-}) \cap C^1(\overline{\Omega_+}).
        \end{alignat*}
        
        \item \textbf{Decay.} The functions satisfy the following decay rates as $|x|\to\infty$:
        \begin{align*}
            &|v(x)|=\mathcal{O}(|x|^{-2}),\quad
            |\nabla v(x)|=\mathcal{O}(|x|^{-3}),\quad
            |q(x)|=\mathcal{O}(|x|^{-3}),\quad
            |\nabla q(x)|=\mathcal{O}(|x|^{-4}),\\
            &|u(x)|=\mathcal{O}(|x|^{-2}),\quad
            |p(x)|=\mathcal{O}(|x|^{-2}),
        \end{align*}
        uniformly for $x\in B$ (bulk fields) or $x\in \partial B$ (surface fields).
        
        \item \textbf{Finite Energy.} The bulk velocity and pressure possess finite Dirichlet integrals:
        \begin{align*}
            \int_B|\nabla v(x)|^2\,\mathrm{d}x<\infty,\qquad
            \int_B|\nabla q(x)|^2\,\mathrm{d}x<\infty.
        \end{align*}
        
        \item \textbf{Pointwise Satisfaction.} The equations~\eqref{eq:Stokes}--\eqref{eq:Langmuir} are satisfied pointwise in $B$ and $\partial B\setminus\Gamma$, respectively.
        The jump conditions~\eqref{eq:jump} on $\Gamma$ are satisfied in the sense of the one-sided limits guaranteed by (i).
    \end{enumerate}
\end{definition}

\subsection{Distributional formulation of surface equations}

In this subsection, we establish that the piecewise classical solution satisfies the governing equations in the sense of distributions on the entire boundary $\partial B$. Specifically, we show that the jump conditions across the interface $\Gamma$ are consistently captured by singular source terms involving the Dirac measure. This global distributional formulation provides the rigorous basis for the integration by parts arguments (generalized Green's formulas) employed in the subsequent uniqueness proof.

\begin{lemma}
    \label{lem:distributional}
    Let $(v,q,u,p)$ be a classical solution to \eqref{eq:Stokes}--\eqref{eq:jump}. Then, the surface equations hold in the sense of distributions on $\partial B$:
    \begin{align*}
        \nabla\cdot u = 0,\qquad
        \nabla p+\partial_{x_3}v_\parallel = -g\nu\delta_\Gamma,
    \end{align*}
    where $\delta_\Gamma$ is the Dirac measure supported on $\Gamma$.
\end{lemma}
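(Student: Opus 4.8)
The plan is to test the surface equations against an arbitrary $\varphi\in C_c^\infty(\partial B)$ (scalar for the divergence equation, $\mathbb{R}^2$-valued for the momentum equation) and split the integral over $\partial B$ into the contributions from $\Omega_-$ and $\Omega_+$, on each of which $(v,q,u,p)$ is classical, so that the pointwise equations \eqref{eq:Langmuir} hold. For the divergence constraint, I would write $\int_{\partial B}u\cdot\nabla\varphi\,\mathrm{d}x = \int_{\Omega_-}u\cdot\nabla\varphi + \int_{\Omega_+}u\cdot\nabla\varphi$, integrate by parts on each piece using the $C^1(\overline{\Omega_\pm})$ regularity from Definition~\ref{def:classical}(i), discard the bulk terms because $\nabla\cdot u=0$ pointwise on each side, and collect the two boundary integrals over $\Gamma$. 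Since the outward normal of $\Omega_-$ along $\Gamma$ is $\nu$ and that of $\Omega_+$ is $-\nu$, the boundary terms combine into $-\int_\Gamma \llbracket u\cdot\nu\rrbracket\,\varphi\,\mathrm{d}s$, which vanishes by the jump condition \eqref{eq:jump}; the decay assumption (ii) together with compact support of $\varphi$ handles the behavior at infinity. Hence $\nabla\cdot u=0$ in $\mathcal{D}'(\partial B)$.

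For the momentum equation I would proceed analogously: pair $\nabla p + \partial_{x_3}v_\parallel$ with a test vector field $\bm{\varphi}$, i.e. compute $-\int_{\partial B} p\,(\nabla\cdot\bm\varphi)\,\mathrm{d}x + \int_{\partial B}(\partial_{x_3}v_\parallel)\cdot\bm\varphi\,\mathrm{d}x$. The second term needs care: $\partial_{x_3}v_\parallel$ is evaluated as the trace on $\partial B$ of the bulk derivative, and by (i) this is continuous up to $\Gamma$ from each side and by (iii)/(ii) it is $L^1_{\mathrm{loc}}$; on each of $\Omega_\pm$ the pointwise identity $\nabla p = -\partial_{x_3}v_\parallel$ from \eqref{eq:Langmuir} lets me replace it by $-\nabla p$ and fold it back into an integration by parts. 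The upshot is that the pieces again reduce to a single interfacial integral $\int_\Gamma \llbracket p\rrbracket\,(\bm\varphi\cdot\nu)\,\mathrm{d}s$, and by \eqref{eq:jump} this equals $\int_\Gamma g\,(\bm\varphi\cdot\nu)\,\mathrm{d}s = \langle g\nu\,\delta_\Gamma,\bm\varphi\rangle$. Comparing with the left-hand side gives $\nabla p + \partial_{x_3}v_\parallel = -g\nu\,\delta_\Gamma$ as distributions; the overall sign is fixed by the jump convention $\llbracket f\rrbracket = f_- - f_+$ and the orientation of $\nu$, and I would double-check it against a simple radially symmetric model case.

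The main technical obstacle is the $\partial_{x_3}v_\parallel$ term, because it is the restriction to $\partial B$ of a quantity differentiated in the \emph{transverse} direction, so it is not an intrinsic surface object and one must argue that its trace exists and is integrable enough to justify the splitting and the pairing with $\bm\varphi$. Here I would lean on the regularity $v\in C^1(\overline{B}\setminus\Gamma;\mathbb{R}^3)$ — which gives a well-defined one-sided trace of $\nabla v$ hence of $\partial_{x_3}v_\parallel$ on each side of $\Gamma$ — and on the decay rate $|\nabla v(x)| = \mathcal{O}(|x|^{-3})$, which makes the trace integrable against compactly supported test functions; near $\Gamma$ itself the possible singularity is absorbed because the test function pairing only sees the $L^1$ behavior, and the jump relation $\nabla p=-\partial_{x_3}v_\parallel$ on $\partial B\setminus\Gamma$ means the two one-sided limits of $\partial_{x_3}v_\parallel$ at $\Gamma$ are controlled by those of $\nabla p$, which are finite by (i). A secondary, purely bookkeeping point is to keep the normal orientations straight when summing the $\Omega_-$ and $\Omega_+$ contributions; once that is done, the identification of the singular right-hand side is immediate.
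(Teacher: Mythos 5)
Your argument is correct and follows the same skeleton as the paper's proof: test against $C_c^\infty$ functions, split $\partial B$ into $\Omega_\pm$, integrate by parts on each piece, and let the jump conditions produce (or annihilate) the interfacial terms. The divergence part is essentially identical to the paper's. The one place where you genuinely diverge is the treatment of $\partial_{x_3}v_\parallel$ as a distribution on all of $\partial B$, which is the only subtle point of the lemma. You interpret it as the a.e.-defined pointwise trace, argue via $\nabla p=-\partial_{x_3}v_\parallel$ on $\partial B\setminus\Gamma$ and $p\in C^1(\overline{\Omega_\pm})$ that this trace is locally bounded up to $\Gamma$ from each side, hence a regular ($L^1_{\mathrm{loc}}$) distribution that cannot carry a singular measure on the null set $\Gamma$; the singular part of $\nabla p$ is then forced to be exactly $-g\nu\delta_\Gamma$. (Your first claim that (i) alone gives continuity of $\partial_{x_3}v_\parallel$ up to $\Gamma$ is an overstatement, since (i) only gives $v\in C^1(\overline B\setminus\Gamma)$, but you repair it correctly by routing the boundedness through $\nabla p$.) The paper instead interprets $\partial_{x_3}v_\parallel$ as the Lions--Magenes generalized normal derivative of $v\in D_\triangle(B)$, an element of $H^{-1/2}(\partial B)$, and excludes a hidden singular part by the functional-analytic fact that $\delta_\Gamma\notin H^{-1/2}(\partial B)$. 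Both routes are valid here; yours is more elementary and self-contained under the classical regularity of Definition~\ref{def:classical}, while the paper's sets up the $H^{-1/2}$ duality framework that is reused verbatim in the uniqueness proof (Theorem~\ref{thm:uniqueness}, Step~3), where only the weaker $D_\triangle$ regularity of the difference solution is available. Your sign bookkeeping, modulo the hedging, lands on the correct identity.
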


\begin{proof}
    Let $\phi \in C_c^\infty(\partial B)$ be a scalar test function. We compute the distributional divergence:
    \begin{align*}
        \langle\nabla\cdot u,\phi\rangle \coloneqq -\int_{\partial B}u \cdot \nabla\phi\,\mathrm{d}x'.
    \end{align*}
    Since $u$ is continuously differentiable on the closure of each subdomain $\overline{\Omega_\pm}$, we can split the integral and apply the classical Gauss--Green theorem on each subdomain.
    Recalling that the unit normal vector on $\Gamma$ points from $\Omega_-$ to $\Omega_+$, we have
    \begin{align*}
        -\int_{\Omega_-}u\cdot\nabla\phi\,\mathrm{d}x'
        &= \int_{\Omega_-}(\nabla\cdot u)\phi\,\mathrm{d}x' - \int_\Gamma(u_-\cdot\nu)\phi\,\mathrm{d}s, \\
        -\int_{\Omega_+}u\cdot\nabla\phi\,\mathrm{d}x' 
        &= \int_{\Omega_+}(\nabla\cdot u)\phi\,\mathrm{d}x' + \int_\Gamma(u_+\cdot\nu)\phi\,\mathrm{d}s.
    \end{align*}
    Using the pointwise condition $\nabla\cdot u = 0$ in $\Omega_\pm$ and summing these contributions, we obtain
    \begin{align*}
        \langle\nabla\cdot u,\phi\rangle
        = -\int_\Gamma\left(u_-\cdot\nu - u_+\cdot\nu\right)\phi\,\mathrm{d}s
        = -\int_\Gamma\llbracket u\cdot\nu \rrbracket\phi\,\mathrm{d}s.
    \end{align*}
    The jump condition $\llbracket u\cdot\nu \rrbracket = 0$ implies $\langle\nabla\cdot u,\phi\rangle = 0$.

    For the momentum equation, let $\psi \in C_c^\infty(\partial B;\mathbb{R}^2)$ be a vector test function.
    Applying integration by parts on each subdomain and using the relations $\nabla p = -\partial_{x_3}v_\parallel$ in $\Omega_\pm$ and $\llbracket p \rrbracket = g$ on $\Gamma$, we obtain
    \begin{align*}
        \langle\nabla p,\psi\rangle
        &\coloneqq -\int_{\partial B}p(\nabla\cdot\psi)\,\mathrm{d}x' \\
        &= \int_{\Omega_-\cup\Omega_+}\nabla p\cdot\psi\,\mathrm{d}x' - \int_\Gamma \llbracket p \rrbracket (\nu \cdot \psi)\,\mathrm{d}s \\
        &= \int_{\Omega_-\cup\Omega_+} (-\partial_{x_3}v_\parallel) \cdot \psi\,\mathrm{d}x' - \int_\Gamma g(\nu \cdot \psi)\,\mathrm{d}s.
    \end{align*}

    We now clarify the meaning of the first term on the right-hand side.
    First, we verify $v \in H^1(B)$. 
    The decay condition $|v(x)| = \mathcal{O}(|x|^{-2})$ ensures $v \in L^2(B \cap \{|x|>R\})$ for large $R$.
    In the bounded region, the condition $\nabla v \in L^2(B)$ combined with the Sobolev embedding implies $v \in L^2(B \cap \{|x|\le R\})$. Thus $v \in L^2(B)$, and consequently $v \in H^1(B)$.
    
    Since $-\triangle v = -\nabla q \in L^2(B)$, the velocity $v$ belongs to the space $D_\triangle(B) \coloneqq \{w \in H^1(B) \mid \triangle w \in L^2(B)\}$.
    By the trace theory of Lions and Magenes~\cite[Theorem~6.5 in Chapter~2, Section~6.5]{lions1972non}, the generalized normal derivative $\partial_{x_3}v$ is uniquely defined as an element of the dual space $H^{-1/2}(\partial B)$.
    The term involving $\partial_{x_3}v_\parallel$ in the distributional formula should be understood as the duality pairing:
    \begin{align*}
        \int_{\Omega_-\cup\Omega_+} (-\partial_{x_3}v_\parallel) \cdot \psi\,\mathrm{d}x' = \langle -\partial_{x_3}v_\parallel, \psi \rangle_{H^{-1/2}, H^{1/2}}.
    \end{align*}
    Crucially, the Dirac measure $\delta_\Gamma$ supported on the curve $\Gamma$ belongs to $H^{-1}(\partial B)$ but not to $H^{-1/2}(\partial B)$ (as the trace of an $H^1$-function onto a line requires regularity $s > 1/2$).
    Since $\partial_{x_3}v_\parallel \in H^{-1/2}(\partial B)$, it cannot contain a singular measure supported on $\Gamma$.
    Therefore, the singular term in the gradient of pressure arises solely from the pressure jump:
    \begin{align*}
        \langle \nabla p, \psi \rangle = \langle -\partial_{x_3}v_\parallel, \psi \rangle_{H^{-1/2}, H^{1/2}} - \langle g\nu\delta_\Gamma, \psi \rangle.
    \end{align*}
    Thus, $\nabla p = -\partial_{x_3}v_\parallel - g\nu\delta_\Gamma$ in $\mathcal{D}'(\partial B)$.
\end{proof}

\subsection{Uniqueness theorem}

With the distributional formulation established in Lemma~\ref{lem:distributional}, we are now ready to prove the uniqueness theorem.

\begin{theorem}
    \label{thm:uniqueness}
    The classical solution to the problem \eqref{eq:Stokes}--\eqref{eq:jump} is unique.
\end{theorem}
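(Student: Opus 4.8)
The plan is the standard energy argument for linear problems: take the difference of two classical solutions $(v,q,u,p)$, which solves \eqref{eq:Stokes}--\eqref{eq:jump} with $g=0$, and show it vanishes. First I would test the Stokes momentum equation $-\triangle v + \nabla q = 0$ in $B$ against $v$ itself and integrate by parts. Because $\nabla\cdot v = 0$, the pressure term $\int_B \nabla q\cdot v$ collapses to a boundary contribution $\int_{\partial B} q\, v_3$, which vanishes since $v_3 = 0$ on $\partial B\setminus\Gamma$ and $\Gamma$ has measure zero; the decay rates in Definition~\ref{def:classical}(ii) kill the contribution from infinity (here the finite-energy hypothesis (iii) guarantees the integrals make sense). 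This yields
\[
    \int_B |\nabla v|^2\,\mathrm{d}x = -\int_{\partial B} (\partial_{x_3} v_\parallel)\cdot v_\parallel\,\mathrm{d}x' + (\text{vanishing terms}),
\]
with $v_\parallel = u$ on $\partial B\setminus\Gamma$.

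Next I would handle the right-hand side using the surface (Langmuir) equations. By Lemma~\ref{lem:distributional}, on $\partial B$ we have $\nabla p + \partial_{x_3} v_\parallel = -g\nu\delta_\Gamma = 0$ in $\mathcal{D}'(\partial B)$ (since $g=0$), so $-\partial_{x_3}v_\parallel = \nabla p$ in the distributional sense. Pairing with $u$ and using $\nabla\cdot u = 0$ (again distributionally on all of $\partial B$, per Lemma~\ref{lem:distributional}) together with the decay $|u|,|p| = \mathcal{O}(|x|^{-2})$ to discard the boundary-at-infinity term, I get $\int_{\partial B}(\partial_{x_3}v_\parallel)\cdot u\,\mathrm{d}x' = -\langle \nabla p, u\rangle = \langle p, \nabla\cdot u\rangle = 0$. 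Care is needed here with the regularity pairing: $\partial_{x_3}v_\parallel \in H^{-1/2}(\partial B)$ and $u$ must be tested against it — so I would either argue that $u$ (piecewise $C^1$ with $\nabla\cdot u = 0$ and decay) lies in $H^{1/2}(\partial B)$ so the duality pairing is legitimate, or work at the level of the $\Omega_\pm$ subdomains with the one-sided traces and combine using $\llbracket u\cdot\nu\rrbracket = 0$ and $\llbracket p\rrbracket = g = 0$, as in the proof of Lemma~\ref{lem:distributional}. Either way the conclusion is $\int_B|\nabla v|^2\,\mathrm{d}x = 0$.

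From $\nabla v \equiv 0$ on the connected set $B$ we get $v \equiv \text{const}$, and the decay $|v(x)| = \mathcal{O}(|x|^{-2})$ forces $v \equiv 0$; hence $u = v_\parallel|_{\partial B\setminus\Gamma} = 0$. Then $\nabla q = \triangle v = 0$ in $B$ gives $q \equiv \text{const}$, and $|q| = \mathcal{O}(|x|^{-3})$ forces $q \equiv 0$. Finally $\nabla p = -\partial_{x_3}v_\parallel = 0$ on each of $\Omega_\pm$, so $p$ is constant on each component; the decay $|p| = \mathcal{O}(|x|^{-2})$ forces $p \equiv 0$ on both sides. Thus the difference of the two solutions is zero, proving uniqueness.

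I expect the main obstacle to be the rigorous justification of the integration by parts on the surface term $\int_{\partial B}(\partial_{x_3}v_\parallel)\cdot u$: the normal derivative $\partial_{x_3}v_\parallel$ is only an $H^{-1/2}(\partial B)$ distribution (not a function), the surface velocity $u$ is only piecewise smooth across $\Gamma$, and one must confirm that no singular contribution concentrates on $\Gamma$ — precisely the point Lemma~\ref{lem:distributional} was designed to settle (the pressure jump $g$ is the only source of a $\delta_\Gamma$, and it is zero here). A secondary technical point is checking that all boundary-at-infinity terms genuinely vanish, for which the stated decay rates are tailored, and that $v_\parallel$ has a well-defined trace on $\partial B$ compatible with the identification $v_\parallel = u$ off $\Gamma$.
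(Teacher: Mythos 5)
Your proposal is correct and follows essentially the same route as the paper: the energy identity $\int_B|\nabla v|^2\,\mathrm{d}x=0$ obtained by killing the bulk pressure term via $\nabla\cdot v=0$ and $v_3=0$, and the boundary term via the surface relations $\partial_{x_3}v_\parallel=-\nabla p$, $\nabla\cdot u=0$ interpreted distributionally through Lemma~\ref{lem:distributional} and the Lions--Magenes $H^{-1/2}$--$H^{1/2}$ pairing (the first of your two suggested ways to justify the pairing is exactly what the paper does). The only blemish is an inconsequential sign on the intermediate boundary term $\int_{\partial B}(\partial_{x_3}v_\parallel)\cdot v_\parallel$ (the outward normal of $B=\mathbb{R}^2\times(-\infty,0)$ is $+e_3$), which does not matter since that term is shown to vanish.
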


\begin{proof}
    Let $(V,Q,U,P)$ be the difference of two classical solutions. Since the problem is linear, this difference satisfies the homogeneous system (i.e., with $g = 0$). We proceed to show that the energy norm of $V$ vanishes.

    \textit{Step 1: Function spaces and generalized Green's formula.}
    First, we verify the functional setting. By the decay and finite energy conditions in Definition~\ref{def:classical}, the same arguments as in Lemma~\ref{lem:distributional} ensure that the bulk fields satisfy $V \in H^1(B; \mathbb{R}^3)$ and $Q \in H^1(B)$, while the surface fields satisfy $U \in L^2(\partial B; \mathbb{R}^2)$ and $P \in L^2(\partial B)$.

    Since $-\triangle V = -\nabla Q \in L^2(B; \mathbb{R}^3)$, $V$ belongs to the space $D_\triangle(B)$. According to Lions and Magenes~\cite{lions1972non}, the generalized normal derivative $\partial_{n}V = \partial_{x_3}V|_{\partial B}$ is uniquely defined in the dual space $H^{-1/2}(\partial B; \mathbb{R}^3)$, and the following generalized Green's formula holds:
    \begin{align}\label{eq:green_id}
        \int_B|\nabla V|^2\,\mathrm{d}x = -\int_B\triangle V\cdot V\,\mathrm{d}x + \langle\partial_{x_3}V, V\rangle_{H^{-1/2},H^{1/2}}.
    \end{align}
    Note that for the difference solution, the jump conditions imply that no singular measures (like $\delta_\Gamma$) appear in the derivative; thus $\partial_{x_3}V$ is a regular distribution in the sense discussed in Lemma~\ref{lem:distributional}.

    \textit{Step 2: Vanishing of the bulk term.}
    We analyze the term $-\int_B\triangle V\cdot V\,\mathrm{d}x$. Substituting $\triangle V = \nabla Q$, we consider $-\int_B\nabla Q \cdot V\,\mathrm{d}x$.
    Since $Q, V \in H^1(B)$, their traces belong to $H^{1/2}(\partial B) \subset L^2(\partial B)$. Thus, we can apply the standard integration by parts formula:
    \begin{align*}
        -\int_B\nabla Q\cdot V\,\mathrm{d}x
        = \int_BQ(\nabla\cdot V)\,\mathrm{d}x - \int_{\partial B}Q(V\cdot n)\,\mathrm{d}x'.
    \end{align*}
    The first term vanishes because $\nabla \cdot V = 0$. For the boundary integral, the condition $V_3 = 0$ on $\partial B\setminus\Gamma$ implies $V \cdot n = V_3 = 0$ almost everywhere on $\partial B$ (since the curve $\Gamma$ has 2D Lebesgue measure zero). Consequently, the boundary integral vanishes, and thus the entire bulk term is zero.

    \textit{Step 3: Vanishing of the boundary term.}
    We evaluate the boundary pairing in \eqref{eq:green_id}.
    From Lemma~\ref{lem:distributional} (with $g=0$), we have the distributional relations $\nabla P + \partial_{x_3}V_\parallel = 0$ and $\nabla \cdot U = 0$ on $\partial B$.
    Since $V|_{\partial B} = (U, 0)$ almost everywhere, the duality pairing reduces to:
    \begin{align*}
        \langle \partial_{x_3}V, V \rangle_{H^{-1/2}, H^{1/2}} 
        = \langle \partial_{x_3}V_\parallel, U \rangle_{H^{-1/2}, H^{1/2}}.
    \end{align*}
    Substituting $\partial_{x_3}V_\parallel = -\nabla P$, we obtain
    \begin{align*}
        \langle \partial_{x_3}V_\parallel, U \rangle_{H^{-1/2}, H^{1/2}} = \langle -\nabla P, U \rangle_{H^{-1/2}, H^{1/2}}.
    \end{align*}
    Note that although $P \in L^2$ generally implies only $\nabla P \in H^{-1}$, the relation $\nabla P = -\partial_{x_3}V_\parallel$ combined with $V \in D_\triangle(B)$ improves the regularity to $\nabla P \in H^{-1/2}$. This ensures that the duality pairing is well-defined.
    By the definition of the distributional derivative (or duality),
    \begin{align*}
        \langle -\nabla P, U \rangle = \langle P, \nabla \cdot U \rangle = 0,
    \end{align*}
    since $\nabla \cdot U = 0$ globally on $\partial B$.

    \textit{Conclusion.}
    The energy identity \eqref{eq:green_id} reduces to $\int_B|\nabla V|^2\,\mathrm{d}x = 0$.
    Since $V$ vanishes at infinity, Poincare-type inequalities imply $V \equiv 0$ in $B$.
    Consequently, $U = V|_{\partial B} \equiv 0$, $\nabla Q = \Delta V \equiv 0$, and $\nabla P = -\partial_{x_3}V_\parallel \equiv 0$. The decay conditions then imply $Q \equiv 0$ and $P \equiv 0$.
\end{proof}

\section{Derivation of the Boundary Integral Equation}
\label{sec:ILLSS-to-BIE}

Having established the uniqueness of the classical solution in the previous section, we now turn our attention to the derivation of an explicit evolution equation for the interface $\Gamma$. To this end, we employ the Dirichlet-to-Neumann (DtN) operator and its fundamental solution to reformulate the bulk problem into a boundary integral equation.

Throughout this section, we assume that the solution and the interface possess sufficient regularity to justify the requisite computations, including integral representations and trace operations. This allows us to formally derive the governing equation for the moving curve. The rigorous mathematical analysis of the derived model, including its local well-posedness and the optimal regularity of solutions, will be provided in Section~\ref{sec:well-posedness}.

\subsection{Dirichlet-to-Neumann operator}

Let us consider the Stokes equation~\eqref{eq:Stokes_subfluid} in the subfluid region with the Dirichlet boundary condition~\eqref{eq:Langmuir_incompressibility}; that is,
\begin{equation}
    \begin{dcases*}
        -\triangle v+\nabla q=0,\quad\nabla\cdot v=0, & in $B$,\\
        v_3=0,\quad v_\parallel=u, & on $\partial B\setminus\Gamma$,
    \end{dcases*}
    \label{eq:Stokes_BVP}
\end{equation}
where the velocity field $u$ on the Langmuir film is given.

\begin{proposition}
\label{prop:Laplace_solves_Stokes}
Let $\Gamma\subset\partial B$ be a $C^2$ Jordan curve. 
Suppose that $u \in C^1(\overline{\Omega_-};\mathbb{R}^2) \cap C^1(\overline{\Omega_+};\mathbb{R}^2)$ satisfies the incompressibility condition $\nabla\cdot u=0$ in $\partial B\setminus\Gamma$.
Assume that $u$ satisfies the jump condition
\[
    \llbracket u\cdot\nu \rrbracket = 0\quad\text{on }\Gamma.
\]
Assume further that $u$ decays at infinity as $|u(x')| = \mathcal{O}(|x'|^{-2})$ as $|x'|\to\infty$.
Let $v\colon\overline{B}\rightarrow\mathbb{R}^3$ be the solution to the Dirichlet boundary value problem for the Laplace equation
\begin{equation}
    \begin{dcases*}
        \triangle v=0 & in $B$,\\
        v_3=0,\quad v_\parallel=u,&on $\partial B$.
    \end{dcases*}
    \label{eq:DP}
\end{equation}
Then, the vector field $v$ is solenoidal in $B$, and thus $(v, q)$ with some constant pressure $q$ solves the Stokes equation~\eqref{eq:Stokes_BVP}.
\end{proposition}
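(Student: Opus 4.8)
\textit{Proof proposal.} The plan is to reduce everything to the single claim $\nabla\cdot v\equiv 0$ in $B$. Once that is known, harmonicity of the components of $v$ gives $\triangle v=0$ in $B$, so choosing $q$ to be any constant yields $-\triangle v+\nabla q=0$ and $\nabla\cdot v=0$ in $B$, while the boundary conditions $v_3=0$ and $v_\parallel=u$ on $\partial B$ are exactly those of \eqref{eq:DP}; hence $(v,q)$ solves \eqref{eq:Stokes_BVP}. So the entire content is to prove $\nabla\cdot v\equiv 0$.

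The first step is to observe that the two hypotheses on $u$ package into a single statement, namely $\nabla\cdot u=0$ in $\mathcal{D}'(\partial B)$ — the divergence being zero on all of $\mathbb{R}^2$, \emph{including across} $\Gamma$. This is precisely the computation carried out in the proof of Lemma~\ref{lem:distributional}: splitting the pairing $-\int_{\partial B}u\cdot\nabla\phi$ over $\Omega_-$ and $\Omega_+$ and integrating by parts on each side, the bulk contributions vanish because $\nabla\cdot u=0$ on $\partial B\setminus\Gamma$, and the two interface terms collapse to $-\int_\Gamma\llbracket u\cdot\nu\rrbracket\phi\,\mathrm{d}s=0$ by the jump condition. (The decay $|u(x')|=\mathcal{O}(|x'|^{-2})$ ensures $u\in L^2(\mathbb{R}^2;\mathbb{R}^2)$, which we use below.)

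The second step is to unravel \eqref{eq:DP}, taking $v$ to be its (unique) bounded, decaying solution. The third component $v_3$ is then the bounded harmonic function on the half-space with zero boundary data, hence $v_3\equiv 0$ in $B$, so $\partial_{x_3}v_3\equiv 0$. For the tangential part, the bounded decaying solution of $\triangle v_\parallel=0$ in $B$ with $v_\parallel|_{\partial B}=u$ is the Poisson integral $v_\parallel(x',x_3)=(P_t* u)(x')$, where $t=-x_3>0$ and $P_t$ is the half-space Poisson kernel on $\mathbb{R}^2$ (and $P_t* u$ indeed decays because $u$ does). Since $P_t$ and all its derivatives belong to $L^1\cap L^\infty(\mathbb{R}^2)$, convolution with $P_t$ commutes with the distributional divergence, so
\[
    \nabla_{x'}\!\cdot v_\parallel(\cdot,x_3)
    =\nabla_{x'}\!\cdot(P_t* u)
    =P_t*(\nabla_{x'}\!\cdot u)
    =P_t* 0
    =0
    \qquad\text{for every }t>0 .
\]
Adding the two contributions, $\nabla\cdot v=\nabla_{x'}\!\cdot v_\parallel+\partial_{x_3}v_3\equiv 0$ in $B$, as required.

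The step I expect to be the main — indeed the only — subtlety is why one cannot simply apply a ``harmonic function with vanishing boundary data'' argument directly to $w:=\nabla\cdot v$. Since $u$ is only $C^1$ up to $\Gamma$ from each side and may jump in the direction tangent to $\Gamma$, the harmonic extension has $\nabla v=\mathcal{O}(\mathrm{dist}(\cdot,\Gamma)^{-1})$ near $\Gamma$, which is exactly the borderline rate at which a removable-singularity statement along the codimension-two curve $\Gamma\subset\overline{B}$ fails in general; so the naive route would require a delicate uniqueness argument at $\Gamma$. The Poisson-kernel computation above avoids $\Gamma$ altogether: it uses only that the interface and incompressibility hypotheses \emph{together} say precisely that $u$ is divergence-free as a distribution on the whole plane, after which commuting the divergence through the convolution is immediate. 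The two small points to check are that $v_\parallel=P_t* u$ really is the solution intended in \eqref{eq:DP} (uniqueness of the decaying harmonic extension) and that $\nabla_{x'}\!\cdot(P_t* u)=P_t*(\nabla_{x'}\!\cdot u)$ holds for $u\in L^2$ with distributional divergence zero (pair $u$ against $\nabla_{x'}(\widetilde{P}_t*\phi)$, noting $\widetilde{P}_t*\phi\in H^1(\mathbb{R}^2)$); both are routine.
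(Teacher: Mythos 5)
Your argument is correct, and it rests on the same mechanism as the paper's proof (the Poisson representation of the harmonic extension plus an integration by parts over $\Omega_\pm$ that turns the jump condition into the vanishing of the interfacial term), but you factor it differently. The paper differentiates the explicit kernel $v(x)=-\frac{x_3}{2\pi}\int_{\partial B}\tilde u(y)\,|x-y|^{-3}\,\mathrm{d}y$ under the integral sign, transfers the derivatives onto $y$, and applies the divergence theorem on $\Omega_-$ and on a truncated exterior $\Omega_+^R$, using the decay $|u(x')|=\mathcal{O}(|x'|^{-2})$ to kill the outer boundary term as $R\to\infty$; what survives is $\frac{x_3}{2\pi}\int_\Gamma \llbracket u\cdot\nu\rrbracket\,|x-y|^{-3}\,\mathrm{d}s$, which vanishes by hypothesis. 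You instead first isolate the statement that piecewise incompressibility together with $\llbracket u\cdot\nu\rrbracket=0$ is exactly $\nabla\cdot u=0$ in $\mathcal{D}'(\mathbb{R}^2)$ — this is verbatim the first half of Lemma~\ref{lem:distributional} — and then push this through the extension by commuting the distributional divergence with convolution against $P_t$. The two computations are the same integration by parts: yours uses a compactly supported test function and then a density argument in $H^1$ (for which you correctly note that $u\in L^2(\mathbb{R}^2)$, supplied by the decay, is what is needed), while the paper uses the kernel itself as the ``test function'' and handles its non-compact support by the explicit truncation at radius $R$. Your version is more modular, reuses Lemma~\ref{lem:distributional}, and makes transparent that the only input is the global weak divergence-free property of $u$; the paper's version avoids the approximation argument you flag at the end by doing the truncation by hand. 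Both treatments tacitly identify $v$ with the Poisson integral of $(u_1,u_2,0)^\top$, so your remark that $v_3\equiv0$ and that this identification needs uniqueness of the decaying harmonic extension applies equally to the paper's proof.
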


\begin{proof}
    Set $\tilde{u}\coloneqq(u_1,u_2,0)^\top$.
    Since $u$ is piecewise continuous and bounded on $\partial B$ (boundedness follows from the continuity up to $\Gamma$ and the decay at infinity), the solution $v$ to the Laplace equation~\eqref{eq:DP} is given by the Poisson integral formula for the half-space:
    \begin{equation*}
        v(x)=-\frac{x_3}{2\pi}\int_{\partial B}\frac{\tilde{u}(y)}{|x-y|^3}\,\mathrm{d}y.
    \end{equation*}
    (See, for instance, \cite[Theorem 14 (p.~37)]{evans2010partial}.)
    Although $u$ may have discontinuities across $\Gamma$, it is well-known that the Poisson integral of a bounded function yields a harmonic function in $B$ that attains the boundary values at every point of continuity (i.e., on $\partial B \setminus \Gamma$).

    We proceed to verify that this $v$ satisfies the incompressibility condition $\nabla\cdot v=0$ in $B$.
    Since $x \in B$ is away from the boundary, the kernel is smooth, and we can differentiate under the integral sign.
    Using the identity $\partial_{x_i}|x-y|^{-3}=-\partial_{y_i}|x-y|^{-3}$ for $i=1,2$, and the surface divergence condition $\nabla\cdot u=0$ in $\Omega_\pm$, we compute
    \begin{align*}
        \nabla\cdot v(x)
        &= -\frac{x_3}{2\pi}\int_{\partial B}\left(u_1(y)\partial_{x_1}|x-y|^{-3}+u_2(y)\partial_{x_2}|x-y|^{-3}\right)\,\mathrm{d}y\\
        &= \frac{x_3}{2\pi}\int_{\partial B}u(y)\cdot\nabla_{y}'|x-y|^{-3}\,\mathrm{d}y\\
        &= \frac{x_3}{2\pi}\left(\int_{\Omega_-}\nabla_y'\cdot\left(\frac{u(y)}{|x-y|^3}\right)\,\mathrm{d}y+\int_{\Omega_+}\nabla_y'\cdot\left(\frac{u(y)}{|x-y|^3}\right)\,\mathrm{d}y\right),
    \end{align*}
    where $\nabla_y'$ denotes the surface gradient with respect to $y$.
    Applying the Gauss divergence theorem to the integral over the bounded domain $\Omega_-$, we obtain
    \begin{align*}
        \int_{\Omega_-}\nabla_y'\cdot\left(\frac{u(y)}{|x-y|^3}\right)\,\mathrm{d}y = \int_{\Gamma}\frac{(u\cdot\nu)_-}{|x-y|^3}\,\mathrm{d}s(y),
    \end{align*}
    where $\nu$ is the unit normal to $\Gamma$ pointing outward from $\Omega_-$.

    For the exterior domain $\Omega_+$, we consider the truncated domain $\Omega_+^R \coloneqq \Omega_+\cap \{y \in \partial B \mid |y|<R\}$ with sufficiently large $R$.
    Applying the divergence theorem yields
    \begin{align*}
        \int_{\Omega_+^R}\nabla_y'\cdot\left(\frac{u(y)}{|x-y|^3}\right)\,\mathrm{d}y &= \int_{\partial D_R}\frac{u(y)\cdot n_R(y)}{|x-y|^3}\,\mathrm{d}s(y) - \int_\Gamma\frac{(u\cdot\nu)_+}{|x-y|^3}\,\mathrm{d}s(y),
    \end{align*}
    where $n_R(y) = y/|y|$ and the negative sign on $\Gamma$ arises because $\nu$ points into $\Omega_+$.
    We estimate the contribution from the outer boundary $\partial D_R$. For fixed $x \in B$ and sufficiently large $R$ (such that $|y|=R > 2|x|$), we have $|x-y| \ge R/2$.
    Using the decay condition $|u(y)| \le C R^{-2}$, we find
    \begin{align*}
        \left|\int_{\partial D_R}\frac{u(y)\cdot n_R(y)}{|x-y|^3}\,\mathrm{d}s(y)\right|
        \le \int_{\partial D_R} \frac{C R^{-2}}{(R/2)^3} \,\mathrm{d}s(y)
        = \frac{8C}{R^5} \cdot 2\pi R = \mathcal{O}(R^{-4}).
    \end{align*}
    Thus, this term vanishes as $R\to\infty$. Consequently,
    \begin{align*}
        \int_{\Omega_+}\nabla_y'\cdot\left(\frac{u(y)}{|x-y|^3}\right)\,\mathrm{d}y = -\int_\Gamma\frac{(u\cdot\nu)_+}{|x-y|^3}\,\mathrm{d}s(y).
    \end{align*}
    Summing the contributions from $\Omega_-$ and $\Omega_+$, we arrive at
    \begin{align*}
        \nabla\cdot v(x) 
        = \frac{x_3}{2\pi}\int_\Gamma \frac{(u\cdot\nu)_- - (u\cdot\nu)_+}{|x-y|^3}\,\mathrm{d}s(y)
        = \frac{x_3}{2\pi}\int_\Gamma \frac{\llbracket u\cdot\nu \rrbracket}{|x-y|^3}\,\mathrm{d}s(y).
    \end{align*}
    Since the jump $\llbracket u\cdot\nu \rrbracket$ vanishes on $\Gamma$ by assumption, the integral is zero, implying $\nabla\cdot v(x) = 0$ for all $x \in B$.
    Finally, since $v$ is harmonic ($\triangle v = 0$) and divergence-free ($\nabla \cdot v = 0$), the Stokes equation $-\triangle v + \nabla q = 0$ is satisfied with a constant pressure $q$ (since $\nabla q = \triangle v = 0$).
\end{proof}

By Proposition~\ref{prop:Laplace_solves_Stokes}, the velocity field $v$ in the subfluid is uniquely determined by its boundary value $u$ as the solution to the Laplace equation. This allows us to reformulate the problem solely in terms of surface quantities.
We introduce the Dirichlet-to-Neumann (DtN) operator $\Lambda_{\mathrm{DN}}\colon H^{1}(\partial B) \to H^{-1}(\partial B)$ associated with the harmonic extension in the lower half-space $B$:
\begin{align*}
    \Lambda_{\mathrm{DN}} u \coloneqq \partial_{x_3} v_\parallel|_{\partial B},
\end{align*}
where $v$ is the solution to \eqref{eq:DP}.
Using this operator, the term $-\partial_{x_3}v_\parallel|_{\partial B}$ in the momentum equation \eqref{eq:velocity_surface} is replaced by $-\Lambda_{\mathrm{DN}}u$. Consequently, the ILLSS model is recast as the following system on the interface and the boundary plane:
\begin{subnumcases}{\label{eq:Langmuir-DtN}}
    \nabla p = -\Lambda_{\mathrm{DN}}u, \quad \nabla\cdot u = 0 & in $\partial B\setminus\Gamma$, \label{eq:Langmuir-DtN-1}\\
    \llbracket u\cdot\nu\rrbracket=0,\quad\llbracket p\rrbracket=\kappa,\quad U=u\cdot\nu & on $\Gamma$. \label{eq:Langmuir-DtN-2}
\end{subnumcases}

\begin{remark}
\label{rem:DtN}
The DtN operator $\Lambda_{\mathrm{DN}}$ is identified with the fractional Laplace operator of order one-half, denoted by $(-\triangle)^{1/2}$ (see, e.g., \cite[Section~1]{caffarelli2007extension}).
In terms of the Fourier multiplier, this relation implies
\begin{equation}
    \mathcal{F}\Lambda_{\mathrm{DN}}u = |k|\mathcal{F}u,
    \label{eq:DtN-Fourier}
\end{equation}
where $k$ denotes the frequency variable in $\mathbb{R}^2$ \cite[Eq.~(7)]{lischke2020what}. Here, the two-dimensional Fourier transform $\mathcal{F}$ and its inverse $\mathcal{F}^{-1}$ are defined as
\begin{equation}
    \hat{u}(k) \coloneqq \int_{\mathbb{R}^2}u(x)\mathrm{e}^{-\mathrm{i}k\cdot x}\,\mathrm{d}x,\quad
    u(x) \coloneqq \frac{1}{(2\pi)^2}\int_{\mathbb{R}^2}\hat{u}(k)\mathrm{e}^{\mathrm{i}k\cdot x}\,\mathrm{d}k,
    \label{eq:Fourier}
\end{equation}
respectively. This spectral characterization \eqref{eq:DtN-Fourier} plays an essential role in the subsequent analysis, particularly in deriving the fundamental solution and proving the curve-shortening property.
\end{remark}

\subsection{The fundamental solution tensor to the DtN operator with incompressibility constraint}

To derive the explicit form of the solution $u$ to the system \eqref{eq:Langmuir-DtN-1}--\eqref{eq:Langmuir-DtN-2}, we construct the fundamental solution tensor for the DtN operator $\Lambda_{\mathrm{DN}}$ subject to the divergence-free constraint.

\begin{lemma}
    \label{lem:fund_sol}
    The fundamental solution tensor $E$ for the surface evolution equation is given by
    \begin{equation}
        E(x)=\frac{x\otimes x}{2\pi|x|^3},
        \label{eq:fundamental_explicit}
    \end{equation}
    where $a\otimes b\coloneqq(a_ib_j)_{i,j=1,2}\in\mathbb{R}^{2\times2}$ denotes the tensor product of vectors $a, b \in \mathbb{R}^2$.
\end{lemma}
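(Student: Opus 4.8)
The plan is to derive $E$ from the Fourier-multiplier representation \eqref{eq:DtN-Fourier} of the DtN operator together with the divergence-free constraint. The governing equations \eqref{eq:Langmuir-DtN-1}--\eqref{eq:Langmuir-DtN-2} away from $\Gamma$ read $\nabla p = -\Lambda_{\mathrm{DN}}u$, $\nabla\cdot u = 0$; eliminating the pressure by applying the (surface) divergence and using $\nabla\cdot u = 0$ gives a Helmholtz-type decomposition in Fourier space. Concretely, for a forcing $f$ I would look for the solution of $\Lambda_{\mathrm{DN}}u + \nabla p = f$, $\nabla\cdot u = 0$ on $\mathbb{R}^2$; taking Fourier transforms and using $\widehat{\Lambda_{\mathrm{DN}}u} = |k|\hat u$, one gets $|k|\hat u + \mathrm{i}k\hat p = \hat f$ and $k\cdot\hat u = 0$. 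Dotting the first relation with $k$ solves for $\hat p = -\mathrm{i}(k\cdot\hat f)/|k|^2$, and substituting back yields
\begin{equation*}
    \hat u(k) = \frac{1}{|k|}\left(I - \frac{k\otimes k}{|k|^2}\right)\hat f(k),
\end{equation*}
so the Fourier symbol of the fundamental solution tensor is $\hat E(k) = |k|^{-1}\bigl(I - k\otimes k/|k|^2\bigr)$, the Leray projector divided by $|k|$.

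The second step is to invert this symbol, i.e.\ to compute $E = \mathcal{F}^{-1}\hat E$ in $\mathbb{R}^2$. I would split $\hat E(k) = |k|^{-1}I - |k|^{-3}(k\otimes k)$ and recall the classical Riesz-potential identities in dimension $n=2$: the inverse Fourier transform of $|k|^{-1}$ is a multiple of $|x|^{-1}$, and the tensor piece $k_ik_j|k|^{-3}$ can be obtained either from $k_ik_j|k|^{-3} = -\partial_{k_i}\partial_{k_j}|k|^{-1} + \delta_{ij}|k|^{-1}\cdot(\text{lower order})$ or, more cleanly, by writing $k_ik_j|k|^{-3} = -\partial_{k_i}\bigl(k_j|k|^{-3}\bigr) + \delta_{ij}|k|^{-3}$ and translating $k$-derivatives into multiplication by $x$ on the physical side. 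Carrying out this homogeneity-degree bookkeeping (the symbol is homogeneous of degree $-1$, so $E$ is homogeneous of degree $-(n-1) = -1$ in $\mathbb{R}^2$) and collecting the constants produces exactly $E(x) = (x\otimes x)/(2\pi|x|^3)$. Alternatively, and perhaps more in the spirit of the paper, one can verify the claimed $E$ directly: compute $\widehat{x\otimes x/|x|^3}$ using that $|x|^{-1}$ has Fourier transform $2\pi|k|^{-1}$ in $\mathbb{R}^2$ and that multiplication by $x_ix_j$ corresponds to $-\partial_{k_i}\partial_{k_j}$, then check the result equals $2\pi|k|^{-1}(I - k\otimes k/|k|^2)$.

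A subtlety worth flagging is that $|k|^{-1}$ and $k\otimes k/|k|^3$ are only locally integrable near $k=0$ up to dimension-dependent thresholds and are tempered distributions requiring the homogeneous-distribution Fourier-transform conventions; likewise $E(x)$ is homogeneous of degree $-1$ and hence not locally integrable against the curvature forcing without care, so the convolution $u = E * (\text{forcing})$ must eventually be read as a principal-value / weakly-singular integral — but at the level of this lemma, which only asserts the explicit formula for $E$, the clean path is the formal Fourier inversion, and I would relegate the distributional bookkeeping to a remark. The step I expect to be the genuine obstacle is matching the normalization constant: getting the $2\pi$ (as opposed to $4\pi$, $\pi$, or $1/(2\pi)$) correct hinges on the precise Fourier-transform convention fixed in \eqref{eq:Fourier} and on the $n=2$ value of the Riesz-kernel constant, and it is easy to slip by a factor of $(2\pi)^{\pm2}$. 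I would pin it down by the single sanity check that $\mathcal{F}[|x|^{-1}](k) = 2\pi|k|^{-1}$ in $\mathbb{R}^2$ under convention \eqref{eq:Fourier}, and propagate that constant through the tensor computation, after which the trace identity $\operatorname{tr}\hat E(k) = |k|^{-1}(2 - 1) = |k|^{-1}$ versus $\operatorname{tr} E(x) = |x|^{-1}/(2\pi)$ gives an independent consistency check.
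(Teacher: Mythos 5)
Your proposal follows essentially the same route as the paper: eliminate the pressure in Fourier space to obtain the symbol $\hat E(k)=|k|^{-1}\left(I-k\otimes k/|k|^2\right)$, then invert using $\mathcal{F}[|x|^{-1}]=2\pi|k|^{-1}$ in $\mathbb{R}^2$ and the correspondence between $k$-multiplication and $x$-differentiation (the paper handles the tensor piece via $\mathcal{F}[|x|]=-2\pi|k|^{-3}$, obtained from $\triangle|x|=|x|^{-1}$, together with the Hessian $\partial_i\partial_j|x|=\delta_{ij}|x|^{-1}-x_ix_j|x|^{-3}$). The only caveat is that your intermediate identity $k_ik_j|k|^{-3}=-\partial_{k_i}(k_j|k|^{-3})+\delta_{ij}|k|^{-3}$ is false as written (its right-hand side equals $3k_ik_j|k|^{-5}$), but your fallback of verifying the claimed $E$ directly is sound, and the normalization and trace checks you propose correctly pin down the factor $2\pi$.
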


\begin{proof}
    We seek the fundamental solution for the following system on the whole plane $\partial B \simeq \mathbb{R}^2$:
    \begin{equation*}
        \nabla p=-\Lambda_{\mathrm{DN}}u+f,\quad\nabla\cdot u=0.
    \end{equation*}
    As established in Lemma~\ref{lem:distributional}, the interface condition $\llbracket u \cdot \nu \rrbracket = 0$ ensures that the divergence-free constraint $\nabla \cdot u = 0$ holds globally on $\partial B$ in the sense of distributions. This justifies the application of the Fourier transform over the entire plane.

    Applying the Fourier transform and using the multiplier representation \eqref{eq:DtN-Fourier}, we obtain the algebraic system:
    \begin{equation}
        \mathrm{i}\hat{p}(k)k=-|k|\hat{u}(k)+\hat{f}(k),\quad
        \hat{u}(k)\cdot k=0,\qquad
        \text{for}\ k\in\mathbb{R}^2.
    \label{eq:Fourier-transform_Langmuir-DtN}
    \end{equation}
    Taking the inner product of the first equation with $k$ and using the orthogonality condition $\hat{u}\cdot k = 0$, we find
    \begin{equation*}
        \mathrm{i}\hat{p}(k)|k|^2=\hat{f}(k)\cdot k,\quad\text{which implies}\quad\hat{p}(k)=\frac{\hat{f}(k)\cdot k}{\mathrm{i}|k|^2}.
    \end{equation*}
    Substituting this expression for $\hat{p}$ back into the first equation of \eqref{eq:Fourier-transform_Langmuir-DtN}, we solve for $\hat{u}(k)$:
    \begin{equation*}
        |k|\hat{u}(k) = \hat{f}(k) - \mathrm{i}\hat{p}(k)k = \hat{f}(k) - \frac{\hat{f}(k)\cdot k}{|k|^2}k = \left(I-\frac{k\otimes k}{|k|^2}\right)\hat{f}(k).
    \end{equation*}
    Thus, the symbol of the fundamental solution tensor is
    \begin{equation*}
        \hat{E}(k)=\frac{1}{|k|}I-\frac{k\otimes k}{|k|^3}\qquad\text{for}\ k\in\mathbb{R}^2.
    \end{equation*}
    
    The fundamental solution $E$ is obtained by the inverse Fourier transform in the sense of tempered distributions.
    Recall that the Fourier transform of the Riesz potential kernel $|x|^{-1}$ in $\mathbb{R}^2$ is
    \begin{align*}
        \mathcal{F}\left[\frac{1}{|x|}\right](k) = \frac{2\pi}{|k|}.
    \end{align*}
    (See, e.g., Stein~\cite[Chapter~V, Section~1, Lemma~1]{stein1970singular}). By the inversion formula,
    \begin{align}
        \mathcal{F}^{-1}\left[\frac{1}{|k|}I\right](x) = \frac{1}{2\pi|x|}I.
    \label{eq:inv_Fourier_1}
    \end{align}

    Next, we compute the inverse Fourier transform of the term $k\otimes k/|k|^3$.
    Using the distributional identity $\triangle|x| = 1/|x|$ in $\mathbb{R}^2$, we have
    \begin{align*}
        \mathcal{F}[|x|](k) = -|k|^{-2} \mathcal{F}[\triangle|x|](k) = -|k|^{-2} \frac{2\pi}{|k|} = -\frac{2\pi}{|k|^3}.
    \end{align*}
    Utilizing the property $\mathcal{F}^{-1}[k_i k_j \hat{g}](x) = -\partial_{x_i}\partial_{x_j} g(x)$, we obtain
    \begin{align*}
        \mathcal{F}^{-1}\left[\frac{k_i k_j}{|k|^3}\right](x)
        &= \mathcal{F}^{-1}\left[ \frac{1}{-2\pi} k_i k_j \mathcal{F}[|x|](k) \right](x)
        = \frac{1}{2\pi} \partial_{x_i}\partial_{x_j}|x|.
    \end{align*}
    Direct calculation of the Hessian yields
    \begin{align*}
        \partial_{x_i}\partial_{x_j}|x| = \partial_{x_i}\left(\frac{x_j}{|x|}\right) = \frac{\delta_{ij}}{|x|} - \frac{x_i x_j}{|x|^3}.
    \end{align*}
    Thus, in tensor notation,
    \begin{align}
        \mathcal{F}^{-1}\left[\frac{k\otimes k}{|k|^3}\right](x) = \frac{1}{2\pi}\left(\frac{1}{|x|}I - \frac{x\otimes x}{|x|^3}\right).
    \label{eq:inv_Fourier_2}
    \end{align}

    Subtracting \eqref{eq:inv_Fourier_2} from \eqref{eq:inv_Fourier_1}, we arrive at the desired expression:
    \begin{align*}
        E(x) &= \frac{1}{2\pi|x|}I - \frac{1}{2\pi}\left(\frac{1}{|x|}I - \frac{x\otimes x}{|x|^3}\right)
        = \frac{x\otimes x}{2\pi|x|^3}.
    \end{align*}
\end{proof}

\subsection{Explicit representation of the normal velocity}

We now apply the general theory established in the previous sections to the specific setting of the ILLSS model.
Recall from Lemma~\ref{lem:distributional} that the momentum equation satisfies the following distributional relation on the entire boundary $\partial B$:
\begin{equation*}
    \nabla p = -\partial_{x_3}v_\parallel - g\nu\delta_\Gamma.
\end{equation*}
Substituting the definition of the DtN operator $\Lambda_{\mathrm{DN}}u = \partial_{x_3}v_\parallel$ and the jump condition $g=\kappa$, this equation becomes
\begin{equation*}
    \nabla p + \Lambda_{\mathrm{DN}} u = - \kappa \nu \delta_\Gamma.
\end{equation*}
This implies that the velocity field $u$ is given by the convolution of the fundamental solution tensor $E$ (derived in Lemma~\ref{lem:fund_sol}) and the singular force term $-\kappa \nu \delta_\Gamma$:
\begin{equation}
    u(x) = -\int_\Gamma E(x - y) \nu(y) \kappa(y) \,\mathrm{d}s(y), \quad x \in \partial B \setminus \Gamma.
    \label{eq:velocity_field_conv}
\end{equation}

We first examine the decay behavior of the velocity field $u$.  
Let $\Gamma$ be a $C^2$ Jordan curve.
Since the kernel $E(z)$ is homogeneous of degree $-1$, we have the expansion $E(x-y) = E(x) + \mathcal{O}(|x|^{-2})$ as $|x|\to\infty$. Substituting this into \eqref{eq:velocity_field_conv}, we obtain
\begin{align*}
    u(x)
    = -E(x) \int_{\Gamma} \kappa(y)\nu(y) \, \mathrm{d}s(y) + \mathcal{O}\left(|x|^{-2}\right) \qquad \text{as } |x| \to \infty.
\end{align*}
Since $\Gamma$ is a closed curve, the Frenet formulas imply the relation $\kappa\nu = -\partial_s \tau$, and thus
\begin{align*}
    \int_\Gamma \kappa(y)\nu(y) \, \mathrm{d}s(y)
    = -\int_\Gamma \partial_s \tau(y) \, \mathrm{d}s(y)
    = 0,
\end{align*}
where $\tau$ denotes the unit tangent vector to $\Gamma$.
This implies that the net force exerted by the surface tension on the fluid vanishes, consistent with the zero net force assumption in Definition~\ref{def:classical}.
Consequently, the monopole term of order $\mathcal{O}(|x|^{-1})$ vanishes, and we obtain the faster dipole decay:
\begin{align}
    |u(x)| = \mathcal{O}(|x|^{-2}) \qquad \text{as } |x| \to \infty.
    \label{eq:u_decay}
\end{align}
This decay rate verifies the condition $|u|=\mathcal{O}(|x|^{-2})$ required for the uniqueness theorem (Theorem~\ref{thm:uniqueness}).

To derive the boundary integral equation model, we analyze the behavior of the normal component of the velocity field $u$ defined by \eqref{eq:velocity_field_conv} as $x$ approaches the interface $\Gamma$.
Consistent with Definition~\ref{def:classical}, we evaluate the limit along the normal direction.

\begin{proposition}
\label{prop:velocity-limit}
Let $\Gamma$ be a $C^2$ Jordan curve. Then, for any $z \in \Gamma$, the limits of $u$ from the interior and exterior along the normal direction exist and coincide. Specifically,
\begin{equation}
    \lim_{\ell \to 0} u(z + \ell \nu(z)) = -\frac{1}{\pi} \kappa(z) \nu(z) - \int_\Gamma E(z - y) \nu(y) \kappa(y) \,\mathrm{d}s(y).
    \label{eq:velocity_limit}
\end{equation}
\end{proposition}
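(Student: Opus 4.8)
The plan is to read the representation \eqref{eq:velocity_field_conv} as a single-layer-type potential for the operator $\Lambda_{\mathrm{DN}}$ and to isolate the boundary jump by the classical near/far decomposition. Fix $z\in\Gamma$, write $x=z+\ell\nu(z)$ with $\ell\neq0$ (the sign of $\ell$ distinguishing the interior and exterior approaches), and for small $\delta>0$ split $\Gamma=\Gamma_\delta\cup(\Gamma\setminus\Gamma_\delta)$ with $\Gamma_\delta\coloneqq\Gamma\cap\{|y-z|<\delta\}$. On the far piece the integrand $E(x-y)\nu(y)\kappa(y)$ depends smoothly on $x$, uniformly for $y\in\Gamma\setminus\Gamma_\delta$, so $\int_{\Gamma\setminus\Gamma_\delta}E(x-y)\nu(y)\kappa(y)\,\mathrm{d}s(y)\to\int_{\Gamma\setminus\Gamma_\delta}E(z-y)\nu(y)\kappa(y)\,\mathrm{d}s(y)$ as $\ell\to0$. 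Moreover, because $\Gamma$ is $C^2$, representing $\Gamma$ near $z$ as a graph over its tangent line at $z$ yields the cancellation $(z-y)\cdot\nu(y)=O(|z-y|^2)$, so the kernel $E(z-y)\nu(y)=\frac{((z-y)\cdot\nu(y))(z-y)}{2\pi|z-y|^3}$ stays bounded along $\Gamma$ and the integral $\int_\Gamma E(z-y)\nu(y)\kappa(y)\,\mathrm{d}s(y)$ converges absolutely; hence the far contribution tends to this integral as $\delta\to0$.

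The substance of the proof is the near piece. In local graph coordinates $y=z+t\,\tau(z)+\phi(t)\,\nu(z)$ with $\phi(0)=\phi'(0)=0$ and $|\phi(t)|\le C t^2$, one has for $y$ near $z$ the expansions $(x-y)\cdot\nu(y)=\ell+O(|z-y|^2)$, $x-y=-t\,\tau(z)+\ell\,\nu(z)+O(|z-y|^2)$, and $|x-y|^2=t^2+\ell^2+O(\ell t^2+t^4)$, so that $E(x-y)\nu(y)\kappa(y)\,\mathrm{d}s(y)$ is well modeled by $\widetilde\Psi(t)\,\mathrm{d}t$ with $\widetilde\Psi(t)\coloneqq\frac{\kappa(z)\,\ell\,(-t\,\tau(z)+\ell\,\nu(z))}{2\pi(t^2+\ell^2)^{3/2}}$. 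A direct computation gives $\int_{-\delta}^{\delta}\widetilde\Psi(t)\,\mathrm{d}t=\frac{\kappa(z)}{\pi}\frac{\delta}{\sqrt{\delta^2+\ell^2}}\,\nu(z)$ — the tangential part vanishing by oddness in $t$ — which tends to $\frac1\pi\kappa(z)\nu(z)$ as $\ell\to0$, \emph{independently of both the sign of $\ell$ and the value of $\delta$}; in contrast to the classical double-layer potential, the extra unit-vector factor $(x-y)/|x-y|$ carried by $E$ renders this jump the same from either side, which is exactly why the interior and exterior limits coincide, and it produces the jump term in \eqref{eq:velocity_limit}. It then remains to show that the difference $E(x-y)\nu(y)\kappa(y)\,\mathrm{d}s(y)-\widetilde\Psi(t)\,\mathrm{d}t$ integrates over $\Gamma_\delta$ to a quantity that is $O(\delta)$ uniformly for small $|\ell|$; this is done by decomposing the difference into the contributions of $(x-y)\cdot\nu(y)-\ell$, of the correction to the vector factor, of $|x-y|^{-3}-(t^2+\ell^2)^{-3/2}$, and of $\kappa(y)-\kappa(z)$ and $\mathrm{d}s-\mathrm{d}t$, each bounded by $C\delta$ using the $C^2$-regularity of $\Gamma$ together with elementary estimates such as $\int_{-\delta}^{\delta}\frac{|\ell|\,t^2}{(t^2+\ell^2)^{3/2}}\,\mathrm{d}t\le C\delta$ and $\int_{-\delta}^{\delta}\frac{t^2}{t^2+\ell^2}\,\mathrm{d}t\le C\delta$.

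Assembling the far and near contributions and sending first $\ell\to0$ for fixed $\delta$ and then $\delta\to0$ in an $\varepsilon$--$\delta$ fashion gives the existence of $\lim_{\ell\to0}u(z+\ell\nu(z))$, its independence of the side of approach, and the value in \eqref{eq:velocity_limit}. The main obstacle is the uniform remainder bound on $\Gamma_\delta$: the error terms scaling pointwise like $|\ell|/|t|$ admit no single integrable majorant, so dominated convergence is unavailable and one must instead verify directly that each such term integrates to $O(\delta)$ uniformly in $\ell$ — which is precisely the point at which the quadratic cancellation $(z-y)\cdot\nu(y)=O(|z-y|^2)$ afforded by the $C^2$ regularity of the curve is indispensable.
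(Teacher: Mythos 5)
Your proof is correct and follows essentially the same route as the paper's: a near/far decomposition, a frozen-coefficient flat model for the near piece whose explicit integral $\frac{\kappa(z)}{2\pi}\int_{-\delta}^{\delta}\ell^2(t^2+\ell^2)^{-3/2}\,\mathrm{d}t \to \frac{\kappa(z)}{\pi}$ yields the side-independent contribution $-\frac{1}{\pi}\kappa(z)\nu(z)$ to $u$, and a remainder controlled by the quadratic cancellation $(z-y)\cdot\nu(y)=\mathcal{O}(|z-y|^2)$ afforded by $C^2$ regularity. The only (organizational) difference is that the paper fixes $\delta$ and shows the remainder $I_\delta - I_\delta'$ is continuous up to the boundary, citing Friedman, whereas you bound it by $\mathcal{O}(\delta)$ uniformly in $\ell$ and pass to an iterated limit; both rest on the same estimates, and your uniform bounds (e.g.\ $\int_{-\delta}^{\delta}|\ell|\,t^2(t^2+\ell^2)^{-3/2}\,\mathrm{d}t \le C\delta$) check out.
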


Note that the integral on the right-hand side of \eqref{eq:velocity_limit} is well-defined as a proper integral due to the geometric cancellation of the singularity.

The following geometric lemma plays a key role in the proof.

\begin{lemma}
\label{lem:ratio_bound}
Let $\delta > 0$ and let $g \in C^2([-\delta, \delta])$ satisfy $g(0) = g'(0) = 0$.
\begin{enumerate}
    \item[{\rm (i)}] For any $\xi \in [-\delta, \delta]$, the following estimates hold:
    \begin{equation*}
        |g(\xi)| \le \frac{1}{2} \| g'' \|_{L^\infty} \xi^2, \qquad
        |g'(\xi)| \le \| g'' \|_{L^\infty} |\xi|.
    \end{equation*}
    \item[{\rm (ii)}] Consider the graph $\Gamma_\delta = \{ G(\xi) \coloneqq (\xi, g(\xi))^\top \mid \xi \in [-\delta, \delta] \}$.
    Let $x(\ell) \coloneqq (0, -\ell)^\top$. Suppose $\ell$ is chosen such that the origin is the nearest point on $\Gamma_\delta$ to $x(\ell)$ and $|x(\ell)| \le 1$. Then, there exist positive constants $c_1, c_2$ depending only on $\delta$ and $\| g'' \|_{L^\infty}$ such that
    \begin{equation*}
        c_1 \le \frac{|x(\ell) - G(\xi)|}{|x(\ell) - \xi'|} \le c_2 \quad \text{for all } \xi \in [-\delta, \delta],
    \end{equation*}
    where $\xi' \coloneqq (\xi, 0)^\top$.
\end{enumerate}
\end{lemma}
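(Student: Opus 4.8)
The two parts are elementary once the hypotheses $g(0)=g'(0)=0$ are unpacked. For part (i) I would invoke Taylor's theorem with the integral (or Lagrange) remainder: since $g'(0)=0$, $g'(\xi)=\int_0^\xi g''(t)\,\mathrm{d}t$, giving $|g'(\xi)|\le\|g''\|_{L^\infty}|\xi|$ immediately; then integrating once more using $g(0)=0$ yields $|g(\xi)|=\bigl|\int_0^\xi g'(t)\,\mathrm{d}t\bigr|\le\|g''\|_{L^\infty}\int_0^{|\xi|}t\,\mathrm{d}t=\tfrac12\|g''\|_{L^\infty}\xi^2$. No obstacle here.

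For part (ii), write $M\coloneqq\|g''\|_{L^\infty}$ and let $x=x(\ell)=(0,-\ell)^\top$ with $\ell>0$ the near-point parameter. The key quantities are $|x-\xi'|^2=\xi^2+\ell^2$ and $|x-G(\xi)|^2=\xi^2+(\ell+g(\xi))^2$. I would compute the ratio of squares,
\[
    R(\xi)\coloneqq\frac{|x-G(\xi)|^2}{|x-\xi'|^2}=\frac{\xi^2+(\ell+g(\xi))^2}{\xi^2+\ell^2}=1+\frac{2\ell g(\xi)+g(\xi)^2}{\xi^2+\ell^2},
\]
and bound the fraction on the right both above and below. The upper bound uses $|g(\xi)|\le\tfrac12 M\xi^2$ from part (i): then $2\ell|g(\xi)|\le M\ell\xi^2\le M\ell(\xi^2+\ell^2)$ and $g(\xi)^2\le\tfrac14 M^2\xi^4$. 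Since $|x|\le1$ forces $\ell\le1$, and one checks $\xi^4\le(\xi^2+\ell^2)\cdot\xi^2\le(\delta^2)(\xi^2+\ell^2)$ on $[-\delta,\delta]$, we get $R(\xi)\le 1+M+\tfrac14 M^2\delta^2=:c_2^2$. For the lower bound, the only danger is that $\ell+g(\xi)$ might be small or negative; but $|g(\xi)|\le\tfrac12 M\xi^2$ gives $2\ell g(\xi)+g(\xi)^2\ge-M\ell\xi^2\ge -M\ell(\xi^2+\ell^2)$ when $|\xi|\ge\ell$...

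Actually the cleaner route for the lower bound, and the step I expect to be the main obstacle, is handling small $|\xi|$: there the correction $2\ell g(\xi)$ is $O(\xi^2\ell)$ while the denominator is $\approx\ell^2$, so the fraction could a priori be as negative as $-M\xi^2/\ell$, which is not uniformly bounded. The resolution is the near-point hypothesis: since the origin is the nearest point of $\Gamma_\delta$ to $x$, the segment from $x$ to the origin is normal to $\Gamma_\delta$ there, which is automatic because $g'(0)=0$ and $x$ lies on the vertical axis — but more importantly, optimality gives $|x-G(\xi)|\ge|x|=\ell$ for all $\xi$, i.e. $\xi^2+(\ell+g(\xi))^2\ge\ell^2$, hence $2\ell g(\xi)+g(\xi)^2\ge-\xi^2$. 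Therefore
\[
    R(\xi)=1+\frac{2\ell g(\xi)+g(\xi)^2}{\xi^2+\ell^2}\ge 1-\frac{\xi^2}{\xi^2+\ell^2}=\frac{\ell^2}{\xi^2+\ell^2}>0,
\]
which by itself degenerates as $\ell\to0$; so I would instead combine the two estimates $2\ell g+g^2\ge-\xi^2$ and $2\ell g+g^2\ge-M\ell\xi^2$. When $\ell\ge\delta$ the second suffices since the fraction is $\ge-M\ell\xi^2/\ell^2\ge -M\xi^2/\delta\cdot(\delta/\ell)\ge$ ... better: split on $|\xi|\le\ell$ versus $|\xi|\ge\ell$. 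For $|\xi|\ge\ell$, use $2\ell|g(\xi)|+g(\xi)^2\le M\ell\xi^2+\tfrac14M^2\xi^4\le(M+\tfrac14M^2\delta^2)\xi^2\le(M+\tfrac14M^2\delta^2)(\xi^2+\ell^2)$, giving a lower bound $c_1^2=1-(M+\tfrac14M^2\delta^2)$ provided $\delta$ is small enough that this is positive — and if it is not, shrink $\delta$ or note the constants are allowed to depend on $\delta,M$ so one may first assume $\delta$ small by monotonicity in $\delta$. For $|\xi|\le\ell$, use the near-point bound $2\ell g+g^2\ge-\xi^2\ge-\ell^2$, so $R(\xi)\ge 1-\ell^2/(\xi^2+\ell^2)\ge 1/2$ when... no, $\ge 0$ only. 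Here I'd use instead: $|x-G(\xi)|^2=\xi^2+(\ell+g(\xi))^2$ and for $|\xi|\le\ell$ we have $|g(\xi)|\le\tfrac12 M\ell^2\le\tfrac12 M\ell$ (as $\ell\le1$), so if additionally $\delta$ is small enough that $M\delta\le1$, then $\ell+g(\xi)\ge\ell-\tfrac12 M\ell^2\ge\tfrac12\ell$, whence $|x-G(\xi)|^2\ge\xi^2+\tfrac14\ell^2\ge\tfrac14(\xi^2+\ell^2)=\tfrac14|x-\xi'|^2$, giving $c_1\ge\tfrac12$ on this range. Taking $c_1$ the minimum of the two constants and $c_2$ as above, and taking square roots, completes the proof. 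The writeup should state at the outset "it suffices to prove the claim for $\delta\le\delta_0(M)$ small" so that the nuisance case-splitting with the positivity condition $1-(M+\tfrac14 M^2\delta^2)>0$ is clean.
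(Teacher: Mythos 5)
Part (i) is fine, and your upper bound $c_2^2=1+M+\tfrac14M^2\delta^2$ (with $M=\|g''\|_{L^\infty}$, using $|\ell|\le1$) is correct. The paper itself omits the proof, citing Friedman, so the comparison is only against correctness — and your lower bound as written has two genuine gaps. First, in the regime $|\xi|\ge\ell$ your constant $c_1^2=1-(M+\tfrac14M^2\delta^2)$ is positive only if $M<1$; shrinking $\delta$ does not rescue this, since $\delta\to0$ still leaves the requirement $M<1$, and $M$ is part of the data. (Your fallback "assume $\delta$ small by monotonicity" would also require a separate argument on the annulus $\delta_0\le|\xi|\le\delta$, which you do not supply.) Second, in the regime $|\xi|\le\ell$ your inequality $\ell+g(\xi)\ge\ell-\tfrac12M\ell^2\ge\tfrac12\ell$ needs $M\ell\le1$, but the hypothesis only gives $\ell\le1$ and there is no relation forcing $\ell\le\delta$, so "$M\delta\le1$" does not imply it.

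Both gaps disappear if you use the two estimates you already wrote down, just matched to the right regimes. For $|\xi|\ge|\ell|$, simply drop the second coordinate: $|x-G(\xi)|^2=\xi^2+(\ell+g(\xi))^2\ge\xi^2\ge\tfrac12(\xi^2+\ell^2)=\tfrac12|x-\xi'|^2$. For $|\xi|\le|\ell|$, use the near-point optimality you correctly identified, $|x-G(\xi)|\ge|x-G(0)|=|\ell|$, together with $|x-\xi'|^2=\xi^2+\ell^2\le2\ell^2$, to get $|x-G(\xi)|\ge\tfrac{1}{\sqrt2}|x-\xi'|$. Hence $c_1=1/\sqrt2$ works with no smallness assumption on $\delta$ or $M$ at all. (You in fact derived $R(\xi)\ge\ell^2/(\xi^2+\ell^2)$ and discarded it as degenerate; it is exactly the bound you need on $|\xi|\le|\ell|$, where it is $\ge1/2$.) One cosmetic point: you fix $\ell>0$, whereas the lemma is applied in Proposition~\ref{prop:velocity-limit} with $\ell$ of either sign; the argument is unchanged with $|\ell|$ in place of $\ell$, but you should say so.
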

The proof of the above lemma will be omitted as it is a standard geometric estimate; see e.g. \cite[p.~135]{friedman1964partial}.

\begin{proof}[Proof of Proposition~\ref{prop:velocity-limit}]
    By translation and rotation, we assume $z = 0$ and $\nu(z) = (0, -1)^\top$. Thus $x(\ell) = z + \ell \nu(z) = (0, -\ell)^\top$.

    \textit{Step 1: Well-definedness of the integral.}
    Since $\Gamma \in C^2$, there exists $c > 0$ such that $|(z- y) \cdot \nu(y)| \le c |z - y|^2$ for $y \in \Gamma$ \cite[Lemma~3.15, p.~124]{folland1995introduction}.
    Using the explicit form \eqref{eq:fundamental_explicit} of $E$, we estimate the kernel at $x=z$:
    \begin{equation*}
        |E(z - y) \nu(y)| = \frac{1}{2\pi} \frac{|(z - y) \left((z - y) \cdot \nu(y)\right)|}{|z - y|^3} 
        \le \frac{1}{2\pi} \frac{|z-y| \cdot c|z-y|^2}{|z-y|^3} = \frac{c}{2\pi}.
    \end{equation*}
    Thus, the integrand is bounded on $\Gamma$, and the integral in \eqref{eq:velocity_limit} is well-defined as a proper Lebesgue integral.

    \textit{Step 2: Local decomposition.}
    Locally near $z=0$, $\Gamma$ is represented as a graph $\Gamma_\delta$ of $g(\xi)$.
    We decompose the integral into a local part $I_\delta(x)$ on $\Gamma_\delta$ and a remote part $J_\delta(x)$ on $\Gamma \setminus \Gamma_\delta$.
    Since $x(\ell)$ stays away from $\Gamma \setminus \Gamma_\delta$ for small $\ell$, the limit $\lim_{\ell \to 0} J_\delta(x(\ell)) = J_\delta(z)$ is trivial.
    The main task is to analyze $I_\delta(x(\ell))$. We approximate $\Gamma_\delta$ by the tangent line (flat segment) and define the approximation term:
    \begin{equation*}
        I_\delta'(x) \coloneqq -\int_{-\delta}^\delta E(x - \xi') \nu(z) \kappa(z) \, \mathrm{d}\xi,
    \end{equation*}
    where $\xi'=(\xi, 0)^\top$ and we have replaced $\nu(\xi')$ and $\kappa(\xi')$ with their values at the origin, $\nu(z) = (0, -1)^\top$ and $\kappa(z)$.

    \textit{Step 3: Limit of the flat approximation $I_\delta'$.}
    Explicit calculation yields:
    \begin{equation*}
        I_\delta'(x(\ell))
        = -\frac{\kappa(z)}{2\pi} \int_{-\delta}^\delta \frac{(x(\ell) - \xi') (x(\ell) - \xi')^\top \nu(z)}{|x(\ell) - \xi'|^3} \, \mathrm{d}\xi.
    \end{equation*}
    Substituting $x(\ell)=(0,-\ell)^\top$ and $\nu(z)=(0,-1)^\top$, we have $(x(\ell) - \xi') \cdot \nu(z) = \ell$. Thus,
    \begin{equation*}
        I_\delta'(x(\ell)) = -\frac{\kappa(z)}{2\pi} \int_{-\delta}^\delta \frac{1}{(\xi^2 + \ell^2)^{3/2}} \begin{pmatrix} -\xi \ell \\ -\ell^2 \end{pmatrix} \, \mathrm{d}\xi.
    \end{equation*}
    The first component vanishes due to odd symmetry. The second component is:
    \begin{equation*}
        (I_\delta'(x(\ell)))_2 = \frac{\kappa(z)}{2\pi} \int_{-\delta}^\delta \frac{\ell^2}{(\xi^2 + \ell^2)^{3/2}} \, \mathrm{d}\xi.
    \end{equation*}
    By the substitution $\xi = |\ell| \zeta$, the integral becomes:
    \begin{equation*}
        \int_{-\delta/|\ell|}^{\delta/|\ell|} \frac{\ell^2}{|\ell|^3 (\zeta^2 + 1)^{3/2}} |\ell| \, \mathrm{d}\zeta = \int_{-\delta/|\ell|}^{\delta/|\ell|} \frac{1}{(\zeta^2 + 1)^{3/2}} \, \mathrm{d}\zeta.
    \end{equation*}
    As $\ell \to 0$, the integration limits tend to $\pm \infty$. Since $\int_{-\infty}^\infty (\zeta^2+1)^{-3/2} d\zeta = 2$, we have:
    \begin{equation*}
        \lim_{\ell \to 0} (I_\delta'(x(\ell)))_2 = \frac{\kappa(z)}{2\pi} \cdot 2 = \frac{\kappa(z)}{\pi}.
    \end{equation*}
    Recovering the vector form with $\nu(z)=(0,-1)^\top$ (note that the second component corresponds to the $-\nu(z)$ direction), we find:
    \begin{equation}
        \lim_{\ell \to 0} I_\delta'(x(\ell)) = \frac{\kappa(z)}{\pi} \begin{pmatrix} 0 \\ 1 \end{pmatrix} = -\frac{\kappa(z)}{\pi} \nu(z).
        \label{eq:limit_Idp}
    \end{equation}
    This limit is independent of the sign of $\ell$, which implies the continuity of the velocity field across the interface (for the flat approximation part).

    \textit{Step 4: Convergence of the remainder $I_\delta - I_\delta'$.}
    It remains to show that the error term $R(\ell) \coloneqq I_\delta(x(\ell)) - I_\delta'(x(\ell))$ converges to $I_\delta(z)$.
    First, observe that $I_\delta'(z) = 0$. Indeed, at $\ell=0$, we have $x(0)=z=0$ and $\xi'=(\xi, 0)^\top$, so $(z-\xi')\cdot\nu(z) = (-\xi, 0)^\top \cdot (0, -1)^\top = 0$. Thus, the integrand of $I_\delta'(z)$ vanishes identically.
    Consequently, proving $R(\ell) \to I_\delta(z)$ is equivalent to proving $I_\delta(x(\ell)) - I_\delta'(x(\ell)) \to I_\delta(z) - I_\delta'(z)$.
    The difference of the kernels possesses a weaker singularity (in fact, it is bounded) due to the $C^2$ regularity of $\Gamma$ and the continuity of $\kappa$.
    By applying the estimates in Lemma~\ref{lem:ratio_bound}, one can verify that the difference is continuous up to the boundary (details omitted; see e.g., \cite[Chapter~5]{friedman1964partial}).
    
    \textit{Conclusion.}
    Combining the results, we obtain
    \begin{align*}
        \lim_{\ell \to 0} u(x(\ell)) 
        &= \lim_{\ell \to 0} J_\delta(x(\ell)) + \lim_{\ell \to 0} I_\delta'(x(\ell)) + \lim_{\ell \to 0} (I_\delta(x(\ell)) - I_\delta'(x(\ell))) \\
        &= J_\delta(z) - \frac{\kappa(z)}{\pi} \nu(z) + (I_\delta(z) - 0) \\
        &= -\frac{\kappa(z)}{\pi} \nu(z) - \int_\Gamma E(z - y) \nu(y) \kappa(y) \, \mathrm{d}s(y).
    \end{align*}
\end{proof}

Using Proposition~\ref{prop:velocity-limit}, the normal velocity $U(z) = u(z) \cdot \nu(z)$ is given by:
\begin{align}
    U(z) &= -\frac{1}{\pi} \kappa(z) - \nu(z) \cdot \int_\Gamma E(z - y) \nu(y) \kappa(y) \, \mathrm{d}s(y) \notag \\
    &= -\frac{1}{\pi} \kappa(z) - \frac{1}{2\pi} \int_\Gamma \frac{(\nu(z) \cdot (z-y)) (\nu(y) \cdot (z-y))}{|z-y|^3} \kappa(y) \, \mathrm{d}s(y).
    \label{eq:normal_velocity_explicit}
\end{align}

\subsection{Boundary Integral Equation}

We now translate the explicit representation of the normal velocity $U$ derived in Proposition~\ref{prop:velocity-limit} into an evolution equation for the parameterization of the interface $\Gamma$.

Let $\gamma \in C^2(\mathbb{S}^1; \mathbb{R}^2)$ be a counter-clockwise parameterization of the closed curve $\Gamma$, where $\mathbb{S}^1 \coloneqq \mathbb{R}/\mathbb{Z}$.
We introduce the following geometric quantities associated with $\gamma$.
The unit tangent vector $\tau(\gamma)$ and the unit outward normal vector $\nu(\gamma)$ are defined by
\begin{equation}
    \tau(\gamma)(x) \coloneqq \frac{\partial_x \gamma(x)}{|\partial_x \gamma(x)|}, \quad
    \nu(\gamma)(x) \coloneqq \begin{pmatrix}
        0&1\\
        -1&0
    \end{pmatrix} \tau(\gamma)(x), \quad x \in \mathbb{S}^1.
\end{equation}
The \textit{curvature density} $\kappa(\gamma)$ is defined by
\begin{equation}
    \kappa(\gamma)(x) \coloneqq -\frac{\nu(\gamma)(x) \cdot \partial_x^2 \gamma(x)}{|\partial_x \gamma(x)|}.
\end{equation}
Note that this quantity relates to the standard geometric curvature $\kappa_{\text{geom}}$ by $\kappa(\gamma)(x) = \kappa_{\text{geom}}(\gamma(x)) |\partial_x \gamma(x)|$.

Next, we define the integral kernel $K(\gamma)$ on $\mathbb{S}^1 \times \mathbb{S}^1$ corresponding to the fundamental solution $E$. Using the notation $\Delta\gamma(x,y) \coloneqq \gamma(y) - \gamma(x)$, we set
\begin{equation}
    K(\gamma)(x, y) \coloneqq \frac{(\nu(\gamma)(x) \cdot \Delta\gamma(x,y)) \, (\nu(\gamma)(y) \cdot \Delta\gamma(x,y))}{|\Delta\gamma(x,y)|^3}, \qquad x \neq y.
\end{equation}

Using these notations, we rewrite the normal velocity $U$ at a point $z = \gamma(x)$ given by \eqref{eq:velocity_limit} in terms of the parameter $x$. 
Substituting the line element $\mathrm{d}s(y) = |\partial_y \gamma(y)|\,\mathrm{d}y$ and the relation $\kappa_{\text{geom}}(y)\,\mathrm{d}s(y) = \kappa(\gamma)(y)\,\mathrm{d}y$, the integral term transforms as follows:
\begin{align*}
    \int_{\Gamma} \frac{(\nu(z) \cdot (z-y)) (\nu(y) \cdot (z-y))}{|z-y|^3} \kappa_{\text{geom}}(y) \, \mathrm{d}s(y) 
    &= \int_{\mathbb{S}^1} \frac{(\nu(\gamma)(x) \cdot \Delta\gamma(x,y)) (\nu(\gamma)(y) \cdot \Delta\gamma(x,y))}{|\Delta\gamma(x,y)|^3} \kappa(\gamma)(y) \, \mathrm{d}y \\
    &= \int_{\mathbb{S}^1} K(\gamma)(x, y) \kappa(\gamma)(y) \, \mathrm{d}y.
\end{align*}
Note that the sign change in the difference vector is cancelled out by the quadratic form in the numerator.
Similarly, the local term $- \kappa_{\text{geom}}(z)/\pi$ transforms to $-\kappa(\gamma)(x)/(\pi |\partial_x \gamma(x)|)$.

Consequently, the normal velocity $U(\gamma(x))$ is represented as
\begin{equation}
    U(\gamma(x)) = -\frac{1}{\pi |\partial_x \gamma(x)|} \kappa(\gamma)(x) + F(\gamma)(x),
    \label{eq:normal_velocity_param}
\end{equation}
where the nonlocal forcing term $F(\gamma)$ is defined by
\begin{equation}
    F(\gamma)(x) \coloneqq -\frac{1}{2\pi} \int_{\mathbb{S}^1} K(\gamma)(x, y) \kappa(\gamma)(y) \, \mathrm{d}y.
\end{equation}

The motion of the interface is determined by the condition $\partial_t \gamma \cdot \nu = U$. Assuming that the tangential velocity is zero, we obtain the boundary integral equation:
\begin{equation}
    \begin{dcases*}
        \partial_t \gamma = \left( -\frac{1}{\pi |\partial_x \gamma|} \kappa(\gamma) + F(\gamma) \right) \nu(\gamma) & in $\mathbb{S}^1 \times (0, T]$, \\
        \gamma(\cdot, 0) = \gamma_0(\cdot) & in $\mathbb{S}^1$.
    \end{dcases*}
    \label{eq:BIE}
\end{equation}

Summarizing the above discussions, we obtain the following proposition, which derives the boundary integral equation \eqref{eq:BIE} from the original ILLSS model \eqref{eq:Stokes_subfluid}--\eqref{eq:jump_condition}.
Crucially, the uniqueness of the classical solution established in Theorem~\ref{thm:uniqueness} ensures that the interface velocity is uniquely determined by the interface configuration via the boundary integral representation \eqref{eq:normal_velocity_param}, thereby justifying the reduction of the bulk problem to the boundary integral equation.

\begin{proposition}
\label{prop:BIE_derivation}
Let $(\Gamma(t))_{t \in [0, T]} \subset \partial B$ be a family of $C^2$ Jordan curves evolving according to the ILLSS model.
Then, the normal velocity of $\Gamma(t)$ is uniquely determined by \eqref{eq:normal_velocity_param}.
Consequently, the evolution of the parameterization $\gamma$ satisfies the boundary integral equation~\eqref{eq:BIE} under the condition of vanishing tangential velocity.
\end{proposition}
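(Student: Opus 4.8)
The plan is to assemble the pieces established in this section and close the argument with the uniqueness theorem. Fix $t \in [0,T]$ and abbreviate $\Gamma = \Gamma(t)$, with curvature $\kappa$; since $(\Gamma(t))_{t}$ evolves by the ILLSS model, the instantaneous bulk and surface fields $(v,q,u,p)$ at time $t$ constitute a classical solution of the stationary elliptic system \eqref{eq:Stokes}--\eqref{eq:jump} with data $g=\kappa$, and the zero net force condition $\int_\Gamma \kappa\nu\,\mathrm{d}s = 0$ required there holds by the Frenet identity $\kappa\nu = -\partial_s\tau$ on the closed curve, as already noted. First I would record that $v$ is necessarily the solenoidal harmonic extension of $u$: by Proposition~\ref{prop:Laplace_solves_Stokes} the divergence-free harmonic field with trace $u$ does solve the Stokes boundary value problem, so on the surface the system collapses to \eqref{eq:Langmuir-DtN}, and Lemma~\ref{lem:distributional} upgrades the momentum equation to the distributional identity $\nabla p + \Lambda_{\mathrm{DN}} u = -\kappa\nu\delta_\Gamma$ on all of $\partial B$.

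Next I would exhibit an explicit candidate for this solution. Convolving the fundamental tensor $E$ of Lemma~\ref{lem:fund_sol} with the singular forcing $-\kappa\nu\delta_\Gamma$ yields the field \eqref{eq:velocity_field_conv}; the homogeneity-$(-1)$ dipole expansion gives the decay $|u(x)| = \mathcal{O}(|x|^{-2})$ already recorded, and Proposition~\ref{prop:velocity-limit} shows $u$ extends continuously to $\Gamma$ from each side with the common normal limit \eqref{eq:velocity_limit}. The remaining fields are recovered in the obvious order: $p$ by integrating the (smooth) right-hand side of $\nabla p = -\Lambda_{\mathrm{DN}} u$ separately on $\Omega_-$ and $\Omega_+$, the relative additive constant being pinned by $\llbracket p\rrbracket = \kappa$ and the exterior constant by the prescribed decay; $v$ as the Poisson integral of $u$ over the half-space; and $q$ as a constant. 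One must then check that the quadruplet $(v,q,u,p)$ so produced satisfies every regularity, decay and finite-energy clause of Definition~\ref{def:classical}.

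With that verification in hand the argument closes quickly. By Theorem~\ref{thm:uniqueness} the classical solution of \eqref{eq:Stokes}--\eqref{eq:jump} is unique, so the ILLSS fields at time $t$ must coincide with the candidate just constructed; in particular their normal trace satisfies \eqref{eq:normal_velocity_explicit}. Rewriting that formula in the parameterization $\gamma$ via $\mathrm{d}s(y) = |\partial_y\gamma(y)|\,\mathrm{d}y$ and $\kappa_{\mathrm{geom}}\,\mathrm{d}s = \kappa(\gamma)\,\mathrm{d}y$ — and noting that the sign of $\Delta\gamma$ cancels in the quadratic numerator of $K(\gamma)$ — gives exactly $U(\gamma(x)) = -\tfrac{1}{\pi|\partial_x\gamma|}\kappa(\gamma)(x) + F(\gamma)(x)$, which is \eqref{eq:normal_velocity_param}. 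Finally, imposing the kinematic condition $\partial_t\gamma\cdot\nu(\gamma) = U(\gamma)$ together with the hypothesis of vanishing tangential velocity $\partial_t\gamma\cdot\tau(\gamma) = 0$ forces $\partial_t\gamma = U(\gamma)\,\nu(\gamma)$, and prescribing $\gamma(\cdot,0) = \gamma_0$ as a counter-clockwise parameterization of $\Gamma(0)$ yields \eqref{eq:BIE}.

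The main obstacle is the one point flagged above as a ``check'': that the boundary-integral candidate is genuinely a classical solution in the strong sense of Definition~\ref{def:classical}. The delicate item is the one-sided $C^1$ regularity of $u$ up to the merely $C^2$ curve $\Gamma$ — and hence of $p$ through $\nabla p = -\Lambda_{\mathrm{DN}} u$ — which requires differentiating the single-layer-type operator with degenerate kernel $E(z-y)\nu(y)$ and controlling the resulting near-singular integrals by the geometric comparison estimates of Lemma~\ref{lem:ratio_bound}, in the spirit of Step~4 of the proof of Proposition~\ref{prop:velocity-limit} but now at the level of derivatives. The complementary bulk regularity $v\in C^2(B)\cap C^1(\overline B\setminus\Gamma)$, $q\in C^1(B)$, and the decay rates for $\nabla v$, $q$, $\nabla q$ are then routine consequences of standard Poisson-integral estimates away from and up to the boundary, while the finite-energy bounds follow from the $\mathcal{O}(|x|^{-2})$ decay together with interior elliptic estimates.
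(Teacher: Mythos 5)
Your proposal is correct and follows essentially the same route as the paper: reduce to the DtN surface system via Proposition~\ref{prop:Laplace_solves_Stokes} and Lemma~\ref{lem:distributional}, construct the candidate velocity by convolution with the fundamental tensor of Lemma~\ref{lem:fund_sol}, take the normal limit via Proposition~\ref{prop:velocity-limit}, invoke Theorem~\ref{thm:uniqueness} to identify the candidate with the ILLSS fields, and rewrite in the parameterization. The regularity verification you flag as the remaining obstacle is exactly the point the paper also defers (it works under a standing ``sufficient regularity'' assumption in this section and completes the check only in Theorem~\ref{thm:existence_ILLSS} once the curve is known to be $C^\infty$), so your treatment matches the paper's.
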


\subsection{Curve-Shortening Property}
\label{subsubsec:CS}

The formulation based on the DtN operator allows us to establish the curve-shortening property in a transparent manner. 
We demonstrate that the length of the interface decreases monotonically, with the dissipation rate precisely controlled by the fractional Sobolev norm of the surface velocity.

First, we recall the definition of the homogeneous Sobolev space $\dot{H}^s(\mathbb{R}^2)$ for $s \in \mathbb{R}$.
Based on the Fourier transform definition given in \eqref{eq:Fourier}, the space is defined as the completion of smooth compactly supported functions:
\begin{equation}
    \dot{H}^s(\mathbb{R}^2) \coloneqq \overline{C_c^\infty(\mathbb{R}^2)}^{\|\cdot\|_{\dot{H}^s(\mathbb{R}^2)}}, \qquad
    \|f\|_{\dot{H}^s(\mathbb{R}^2)} \coloneqq \left\| |k|^s \hat{f}(k) \right\|_{L^2(\mathbb{R}^2)}.
\end{equation}

\begin{proposition}
\label{prop:CS}
Let $(\Gamma(t))_{t \in [0,T)} \subset \partial B$ be a family of $C^2$ Jordan curves evolving according to the ILLSS model~\eqref{eq:Langmuir-DtN-1}--\eqref{eq:Langmuir-DtN-2}.
Let $\mathcal{L}(t)$ denote the length of $\Gamma(t)$.
Then, the rate of change of the perimeter satisfies:
\begin{equation}
    \frac{\mathrm{d}}{\mathrm{d}t} \mathcal{L}(t) = -\frac{1}{(2\pi)^2} \|u\|_{\dot{H}^{1/2}(\mathbb{R}^2)}^2.
\end{equation}
In particular, the length $\mathcal{L}(t)$ is strictly decreasing unless the fluid is at rest ($u \equiv 0$).
\end{proposition}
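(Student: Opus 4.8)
The plan is to combine the first variation of arclength with an energy identity for the surface equations, the latter being made transparent by the DtN (equivalently, fractional-Laplacian) structure. \textbf{Step 1 (first variation).} Parametrizing $\Gamma(t)$ by $\gamma(\cdot,t)$, I would write $\mathcal{L}(t)=\int_{\mathbb{S}^1}|\partial_x\gamma|\,\mathrm{d}x$, differentiate in $t$, integrate by parts on $\mathbb{S}^1$ (no boundary terms), and use the Frenet relation $\partial_s\tau=-\kappa\nu$ already invoked in the decay computation. Since $\partial_s\tau\cdot\tau=0$, only the normal velocity survives, and one obtains the classical identity $\frac{\mathrm{d}}{\mathrm{d}t}\mathcal{L}(t)=\int_\Gamma\kappa\,(\partial_t\gamma\cdot\nu)\,\mathrm{d}s=\int_\Gamma\kappa\,U\,\mathrm{d}s$, where $U=u\cdot\nu$ on $\Gamma$ by \eqref{eq:Langmuir-DtN-2} and $\kappa$ is simultaneously the geometric curvature and the pressure jump $\llbracket p\rrbracket$.

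\textbf{Step 2 (energy identity).} I would next prove $\int_{\partial B}u\cdot\Lambda_{\mathrm{DN}}u\,\mathrm{d}x'=-\int_\Gamma\kappa\,U\,\mathrm{d}s$. Taking the scalar product of $\Lambda_{\mathrm{DN}}u=-\nabla p$ on $\partial B\setminus\Gamma$ (from \eqref{eq:Langmuir-DtN-1}) with $u$ and integrating over $\Omega_-\cup\Omega_+$ (which differs from $\partial B$ only by the null set $\Gamma$), I integrate by parts separately on $\Omega_-$ and on the truncation $\Omega_+^R=\Omega_+\cap\{|x'|<R\}$: the volume terms vanish since $\nabla\cdot u=0$ in $\Omega_\pm$, the contribution of $\{|x'|=R\}$ is $\mathcal{O}(R^{-3})\to0$ by the decay $|u|,|p|=\mathcal{O}(|x|^{-2})$, and the two $\Gamma$-integrals collapse, via $\llbracket u\cdot\nu\rrbracket=0$ and $\llbracket p\rrbracket=\kappa$, to $-\int_\Gamma(p_--p_+)(u\cdot\nu)\,\mathrm{d}s=-\int_\Gamma\kappa\,U\,\mathrm{d}s$. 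All traces involved are classical: on $\partial B\setminus\Gamma$ one has $\Lambda_{\mathrm{DN}}u=-\nabla p$, which by Definition~\ref{def:classical} is $C^0$ up to $\Gamma$ from each side, so although the global identity $\nabla p+\Lambda_{\mathrm{DN}}u=-\kappa\nu\delta_\Gamma$ of Lemma~\ref{lem:distributional} carries a singular measure, it is entirely absorbed by the pressure jump.

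\textbf{Step 3 (Fourier identification and conclusion).} Finally I would identify the left-hand side with the homogeneous seminorm: by Parseval together with the multiplier $\mathcal{F}\Lambda_{\mathrm{DN}}u=|k|\mathcal{F}u$ from Remark~\ref{rem:DtN} and the normalization \eqref{eq:Fourier}, $\int_{\partial B}u\cdot\Lambda_{\mathrm{DN}}u\,\mathrm{d}x'=\frac{1}{(2\pi)^2}\int_{\mathbb{R}^2}|k|\,|\hat u(k)|^2\,\mathrm{d}k=\frac{1}{(2\pi)^2}\|u\|_{\dot H^{1/2}(\mathbb{R}^2)}^2$; equivalently, via the generalized Green formula of Lemma~\ref{lem:distributional} this equals $\int_B|\nabla v|^2\,\mathrm{d}x$ for the harmonic extension $v$, and the finite-energy hypothesis $\int_B|\nabla v|^2<\infty$ is precisely $u\in\dot H^{1/2}(\mathbb{R}^2)$, so the right-hand side is finite. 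Combining Steps~1--3 yields $\frac{\mathrm{d}}{\mathrm{d}t}\mathcal{L}(t)=\int_\Gamma\kappa\,U\,\mathrm{d}s=-\frac{1}{(2\pi)^2}\|u\|_{\dot H^{1/2}(\mathbb{R}^2)}^2\le0$, with equality if and only if $\hat u\equiv0$, i.e. $u\equiv0$.

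\textbf{Main obstacle.} The computation itself is elementary; the only genuine care is the bookkeeping of Step~2 under the limited regularity — namely that $\Lambda_{\mathrm{DN}}u$ is continuous up to $\Gamma$ from each side so that the jump conditions are the sole source of the nonvanishing $\Gamma$-contribution, and that the boundary term at infinity indeed vanishes — both of which are controlled by the decay and finite-energy conditions of Definition~\ref{def:classical} and by the distributional framework of Lemma~\ref{lem:distributional}. I do not anticipate any hard estimate beyond this.
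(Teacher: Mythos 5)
Your proposal is correct and follows essentially the same route as the paper's proof: the first variation of arclength, the energy identity obtained by integrating $\nabla p=-\Lambda_{\mathrm{DN}}u$ against $u$ over $\Omega_\pm$ with the divergence theorem, incompressibility, and the jump conditions, and finally Plancherel with the multiplier $|k|$. Your extra care with the truncation at $|x'|=R$ and the $\mathcal{O}(R^{-3})$ decay of the far-field boundary term is a harmless elaboration of what the paper dispatches by citing the decay estimate \eqref{eq:u_decay}.
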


\begin{proof}
    First, recall from \eqref{eq:u_decay} that the velocity field satisfies the decay estimate $|u(x)| = \mathcal{O}(|x|^{-2})$ as $|x| \to \infty$. 
    This decay rate is sufficient to justify the integration by parts on the unbounded domain $\mathbb{R}^2$.

    \textit{Step 1: Geometric evolution.}
    According to the Reynolds transport theorem, the time derivative of the length $\mathcal{L}(t)$ of a curve evolving with normal velocity $U = u \cdot \nu$ is given by
    \begin{equation}
        \frac{\mathrm{d}}{\mathrm{d}t} \mathcal{L}(t) = \int_{\Gamma} \kappa (u \cdot \nu) \, \mathrm{d}s.
        \label{eq:geo_rate}
    \end{equation}

    \textit{Step 2: Energy dissipation.}
    We compute the energy dissipation rate defined by the DtN operator.
    Using the bulk relation $-\Lambda_{\mathrm{DN}} u = \nabla p$ (valid in $\Omega_\pm$), we split the integral over $\mathbb{R}^2$ into the interior and exterior domains:
    \begin{align*}
        -\int_{\mathbb{R}^2} (\Lambda_{\mathrm{DN}} u) \cdot u \, \mathrm{d}x
        &= \int_{\Omega_-} \nabla p \cdot u \, \mathrm{d}x + \int_{\Omega_+} \nabla p \cdot u \, \mathrm{d}x.
    \end{align*}
    Applying the divergence theorem to each domain and using the incompressibility $\nabla \cdot u = 0$, we have
    \begin{align*}
        \int_{\Omega_-} \nabla p \cdot u \, \mathrm{d}x 
        &= \int_{\partial \Omega_-} p (u \cdot \nu_-) \, \mathrm{d}s = \int_{\Gamma} p_- (u \cdot \nu) \, \mathrm{d}s, \\
        \int_{\Omega_+} \nabla p \cdot u \, \mathrm{d}x 
        &= \int_{\partial \Omega_+} p (u \cdot \nu_+) \, \mathrm{d}s = -\int_{\Gamma} p_+ (u \cdot \nu) \, \mathrm{d}s,
    \end{align*}
    where $\nu$ is the unit normal vector pointing from $\Omega_-$ to $\Omega_+$.
    Summing these contributions and using the pressure jump condition $\llbracket p \rrbracket = \kappa$, we obtain
    \begin{equation}
        -\int_{\mathbb{R}^2} (\Lambda_{\mathrm{DN}} u) \cdot u \, \mathrm{d}x
        = \int_{\Gamma} (p_- - p_+) (u \cdot \nu) \, \mathrm{d}s
        = \int_{\Gamma} \kappa (u \cdot \nu) \, \mathrm{d}s.
        \label{eq:dissipation_rate}
    \end{equation}

    \textit{Conclusion.}
    Comparing \eqref{eq:geo_rate} and \eqref{eq:dissipation_rate}, we obtain the identity:
    \begin{equation*}
        \frac{\mathrm{d}}{\mathrm{d}t} \mathcal{L}(t) = -\int_{\mathbb{R}^2} (\Lambda_{\mathrm{DN}} u) \cdot u \, \mathrm{d}x.
    \end{equation*}
    Finally, using Plancherel's theorem and the symbol of the DtN operator ($|k|$), we arrive at the desired result:
    \begin{equation*}
        \frac{\mathrm{d}}{\mathrm{d}t} \mathcal{L}(t) 
        = -\frac{1}{(2\pi)^2} \int_{\mathbb{R}^2} \overline{\hat{u}(k)} \cdot (|k| \hat{u}(k)) \, \mathrm{d}k
        = -\frac{1}{(2\pi)^2} \|u\|_{\dot{H}^{1/2}(\mathbb{R}^2)}^2.
    \end{equation*}
\end{proof}

\section{Preliminaries}
\label{sec:preliminaries}

Hereafter, we suppress the target space $\mathbb{R}^2$ in the notation of function spaces and norms.
Specifically, we write $C^k$, $L^2$, and $H^k$ to denote $C^k(\mathbb{S}^1;\mathbb{R}^2)$, $L^2(\mathbb{S}^1;\mathbb{R}^2)$, and $H^k(\mathbb{S}^1;\mathbb{R}^2)$, respectively.
Accordingly, we abbreviate their norms to $\|\cdot\|_{C^k}$, $\|\cdot\|_{L^2}$, and $\|\cdot\|_{H^k}$.
Furthermore, for a reference curve $\bar{\gamma}\in H^2$ and $\varepsilon>0$, we define the open ball centered at $\bar{\gamma}$ by
\begin{align*}
    B_\varepsilon(\bar{\gamma})\coloneqq\{\eta\in H^2\mid \|\eta-\bar{\gamma}\|_{H^2}<\varepsilon\}.
\end{align*}

\subsection{DeTurck trick}

Since the boundary integral equation \eqref{eq:BIE} describes a geometric evolution, it is invariant under reparametrization of the curve. This geometric invariance leads to the degeneracy of the diffusion operator in the tangential direction, precluding the direct application of standard parabolic theory. To establish local well-posedness, we employ the so-called DeTurck trick to fix the gauge and transform the system into a strictly parabolic quasilinear equation.

Recall the Frenet--Serret formula for a curve $\gamma$:
\begin{align*}
    -\frac{1}{|\partial_x\gamma|}\kappa(\gamma)\nu(\gamma)
    = \partial_s\tau(\gamma)
    = \frac{1}{|\partial_x\gamma|}\partial_x\left(\frac{\partial_x\gamma}{|\partial_x\gamma|}\right)
    = \frac{\partial_x^2\gamma}{|\partial_x\gamma|^2} - \left(\frac{\partial_x^2\gamma}{|\partial_x\gamma|^2}\cdot\tau(\gamma)\right)\tau(\gamma).
\end{align*}
The first term on the right-hand side corresponds to the principal elliptic part (essentially the Laplacian), whereas the second term represents the tangential component responsible for the degeneracy.
Following the idea of DeTurck, we consider the following \textit{modified problem} where we artificially add a tangential velocity term to cancel the degeneracy, effectively replacing the curvature term with the Laplacian-like term:
\begin{equation}
    \begin{dcases*}
        \partial_t\gamma = \frac{1}{\pi}\frac{\partial_x^2\gamma}{|\partial_x\gamma|^2} + F(\gamma)\nu(\gamma) & in $\mathbb{S}^1\times(0,T]$,\\
        \gamma(\cdot,0) = \gamma_0(\cdot) & in $\mathbb{S}^1$.
    \end{dcases*}
    \label{eq:Langmuir_rewrite}
\end{equation}
This system is strictly parabolic. We now verify that solving this modified problem is equivalent to solving the original geometric problem, modulo a time-dependent reparametrization.

Suppose we have a sufficiently smooth solution $\tilde{\gamma}$ to the modified problem \eqref{eq:Langmuir_rewrite} that satisfies the non-degeneracy condition $|\partial_x \tilde{\gamma}| > 0$. We construct a solution to the original problem \eqref{eq:BIE} by reparametrizing $\tilde{\gamma}$.
Let $G(\tilde{\gamma})$ be the scalar tangential velocity component defined by:
\[
    G(\tilde{\gamma}) \coloneqq \frac{1}{\pi |\partial_x \tilde{\gamma}|} \left( \frac{\partial_x^2 \tilde{\gamma}}{|\partial_x \tilde{\gamma}|^2} \cdot \tau(\tilde{\gamma}) \right).
\]
Consider the following ODE for the reparametrization map $\varphi(x,t)$:
\begin{equation} \label{eq:change}
    \begin{dcases*}
    \frac{\partial\varphi}{\partial t}(x,t) = -G(\tilde{\gamma})(\varphi(x,t),t) & for $t\in(0,T]$,\\
    \varphi(x,0) = x.
    \end{dcases*}
\end{equation}
Since $\mathbb{S}^1$ is compact, for a smooth $\tilde{\gamma}$, this ODE admits a unique solution $\varphi(\cdot,t)$ which is a diffeomorphism of $\mathbb{S}^1$ for each $t$.
Define $\gamma(x,t) \coloneqq \tilde{\gamma}(\varphi(x,t), t)$.
By the chain rule, the time derivative of $\gamma$ is given by
\begin{align*}
    \partial_t \gamma(x,t) 
    &= (\partial_t \tilde{\gamma})(\varphi, t) + (\partial_x \tilde{\gamma})(\varphi, t) \frac{\partial\varphi}{\partial t} \\
    &= \left( \frac{1}{\pi} \frac{\partial_x^2 \tilde{\gamma}}{|\partial_x \tilde{\gamma}|^2} + F(\tilde{\gamma})\nu(\tilde{\gamma}) \right)(\varphi, t) - \left( G(\tilde{\gamma}) \partial_x \tilde{\gamma} \right)(\varphi, t).
\end{align*}
Using the definition of $G(\tilde{\gamma})$ and the relation $\partial_x \tilde{\gamma} = |\partial_x \tilde{\gamma}| \tau(\tilde{\gamma})$, the second term becomes:
\[
    G(\tilde{\gamma}) \partial_x \tilde{\gamma} 
    = \left[ \frac{1}{\pi |\partial_x \tilde{\gamma}|} \left( \frac{\partial_x^2 \tilde{\gamma}}{|\partial_x \tilde{\gamma}|^2} \cdot \tau(\tilde{\gamma}) \right) \right] |\partial_x \tilde{\gamma}| \tau(\tilde{\gamma})
    = \frac{1}{\pi} \left( \frac{\partial_x^2 \tilde{\gamma}}{|\partial_x \tilde{\gamma}|^2} \cdot \tau(\tilde{\gamma}) \right) \tau(\tilde{\gamma}).
\]
Subtracting this tangential component recovers the curvature vector via the Frenet--Serret formula:
\begin{align*}
    \partial_t \gamma
    &= \frac{1}{\pi} \left[ \frac{\partial_x^2 \tilde{\gamma}}{|\partial_x \tilde{\gamma}|^2} - \left( \frac{\partial_x^2 \tilde{\gamma}}{|\partial_x \tilde{\gamma}|^2} \cdot \tau(\tilde{\gamma}) \right) \tau(\tilde{\gamma}) \right] + F(\tilde{\gamma})\nu(\tilde{\gamma}) \\
    &= \frac{1}{\pi} \left( -\frac{1}{|\partial_x \tilde{\gamma}|} \kappa(\tilde{\gamma})\nu(\tilde{\gamma}) \right) + F(\tilde{\gamma})\nu(\tilde{\gamma}).
\end{align*}
Thus, $\gamma$ satisfies the original motion law \eqref{eq:BIE} concerning the normal velocity.

For the subsequent analysis, we define the differential operator $A(\gamma)$ and the lower-order term $f(\gamma)$ for the modified problem \eqref{eq:Langmuir_rewrite} as
\begin{align*}
    A(\gamma) \coloneqq \frac{1}{\pi |\partial_x\gamma|^2} \partial_x^2, \qquad
    f(\gamma) \coloneqq F(\gamma)\nu(\gamma).
\end{align*}
Then, \eqref{eq:Langmuir_rewrite} is written as an abstract quasilinear evolution equation:
\begin{equation}
    \begin{dcases*}
        \partial_t \gamma = A(\gamma)\gamma + f(\gamma) & for $t\in(0,T]$,\\
        \gamma(0) = \gamma_0.
    \end{dcases*}
    \label{eq:abstract_evolution}
\end{equation}
In view of the equivalence established above, the well-posedness of the original geometric evolution is reduced to that of this strictly parabolic system. Therefore, in the remainder of this paper, we shall focus our analysis on the modified problem~\eqref{eq:abstract_evolution}.

\subsection{Admissible set and function spaces}

In this subsection, we define the admissible set of curves and introduce the necessary function spaces and analytical tools.

\subsubsection{Sobolev embeddings}

We recall the standard Sobolev embeddings on the circle $\mathbb{S}^1$. For each $k \in \mathbb{Z}_{\ge 0}$ and $\alpha \in [0, 1/2)$, we have the continuous embedding $H^{k+1} \hookrightarrow C^{k,\alpha}$.
We denote the embedding constant by $S_{k,\alpha}$:
\begin{align}
    \|\gamma\|_{C^{k,\alpha}} \le S_{k,\alpha} \|\gamma\|_{H^{k+1}}.
    \label{eq:Sobolev}
\end{align}
We abbreviate $S_{k,0}$ as $S_k$.

\subsubsection{Admissible set}

For $\gamma \in C^1(\mathbb{S}^1; \mathbb{R}^2)$, we define the \textit{chord-arc constant} $|\gamma|_\ast$ by
\begin{align*}
    |\gamma|_\ast \coloneqq \inf_{\substack{x,y\in \mathbb{S}^1\\x\neq y}} \frac{|\gamma(x)-\gamma(y)|}{d_{\mathbb{S}^1}(x,y)},
\end{align*}
where $d_{\mathbb{S}^1}(x,y) \coloneqq \min_{k\in\mathbb{Z}}|x-y-k|$ denotes the geodesic distance on $\mathbb{S}^1$.
The condition $|\gamma|_\ast > 0$ ensures that the curve has no self-intersections and that the parameterization is non-degenerate ($|\partial_x \gamma| > 0$).
Specifically, this definition yields the following lower bounds:
\begin{alignat}{2}
    |\Delta\gamma(y,x)| &\ge |\gamma|_\ast d_{\mathbb{S}^1}(x,y), &\qquad& x,y\in\mathbb{S}^1, \label{eq:diff_LB}\\
    |\partial_x \gamma(x)| &\ge |\gamma|_\ast, &\qquad& x\in\mathbb{S}^1. \label{eq:gamma'_LB}
\end{alignat}
We define the open set of admissible parameterizations $V$ by
\begin{align*}
    V \coloneqq \left\{ \gamma\in H^2(\mathbb{S}^1; \mathbb{R}^2) \relmiddle{|} |\gamma|_\ast > 0 \right\}.
\end{align*}
Note that the continuous embedding $H^2 \hookrightarrow C^1$ ensures that $|\gamma|_\ast$ is well-defined for $\gamma \in V$.
Since $H^1(\mathbb{S}^1)$ is a Banach algebra, the differential operator $A$ and the nonlinear term $f$ act as well-defined mappings on this domain:
\[
    A \colon V \to \mathcal{B}(H^3, H^1), \qquad f \colon V \to H^1.
\]

\subsubsection{Hardy--Littlewood maximal function}

To handle the singular integral kernel appearing in the nonlocal term $F(\gamma)$, we utilize the Hardy--Littlewood maximal function.

\begin{definition}
    \label{def:HL}
    Let $h$ be a locally integrable function on $\mathbb{S}^1$. The \textit{Hardy--Littlewood maximal function} $\mathcal{M}(h)$ is defined by
    \begin{align*}
        \mathcal{M}(h)(x) \coloneqq \sup_{\substack{I \subset \mathbb{S}^1 \text{: open arc},\\ I \ni x}} \frac{1}{|I|} \int_I |h(y)| \, \mathrm{d}y,
    \end{align*}
    where $|I|$ denotes the arc length of $I$.
\end{definition}

\begin{proposition}[{See e.g., \cite[Theorem 1 in Chapter~I, Section~1]{stein1970singular}}]
    \label{prop:HL}
    The maximal operator is bounded on $L^2$. That is, there exists a constant $C_{\mathrm{HL}}>0$ such that
    \begin{align*}
        \|\mathcal{M}(h)\|_{L^2} \le C_{\mathrm{HL}}\|h\|_{L^2}
    \end{align*}
    holds for all $h\in L^2(\mathbb{S}^1)$.
\end{proposition}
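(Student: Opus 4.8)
The plan is to establish this classical Hardy--Littlewood bound by the standard two-step route: a weak-type $(1,1)$ estimate obtained from a Vitali covering argument, combined with the trivial $L^\infty$ bound, and then interpolated (or, since only the single exponent $p=2$ is needed, handled by a direct distribution-function computation) to yield the strong $L^2$ estimate. Since this is a textbook fact, in the paper itself we simply cite \cite{stein1970singular}; what follows is the argument one would write out.

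\smallskip
\noindent\textbf{Step 1: weak type $(1,1)$.} First I would show that there is $C>0$ with
\[
    \bigl|\{x\in\mathbb{S}^1 : \mathcal{M}(h)(x)>\lambda\}\bigr|\le \frac{C}{\lambda}\|h\|_{L^1(\mathbb{S}^1)}\qquad\text{for all }\lambda>0.
\]
If $\lambda\le\|h\|_{L^1}$ the right-hand side is $\ge 1=|\mathbb{S}^1|$ and nothing is to prove, so assume $\lambda>\|h\|_{L^1}$. For each $x$ in the level set $E_\lambda\coloneqq\{\mathcal{M}(h)>\lambda\}$ pick, by Definition~\ref{def:HL}, an open arc $I_x\ni x$ with $\frac{1}{|I_x|}\int_{I_x}|h|>\lambda$; then $|I_x|<\|h\|_{L^1}/\lambda<1$, so each $I_x$ is a \emph{proper} arc (not all of $\mathbb{S}^1$), which is the one point that needs care on the circle. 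The family $\{I_x\}_{x\in E_\lambda}$ covers $E_\lambda$, and by the Vitali ($3r$-)covering lemma on $\mathbb{S}^1$ one extracts a countable pairwise disjoint subfamily $\{I_{x_j}\}_j$ whose threefold dilates still cover $E_\lambda$. Hence
\[
    |E_\lambda|\le \sum_j |3I_{x_j}| \le 3\sum_j |I_{x_j}| < \frac{3}{\lambda}\sum_j\int_{I_{x_j}}|h|\,\mathrm{d}y\le\frac{3}{\lambda}\|h\|_{L^1},
\]
using disjointness in the last inequality. This gives the claim with $C=3$.

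\smallskip
\noindent\textbf{Step 2: from weak $(1,1)$ and $L^\infty$ to $L^2$.} The bound $\|\mathcal{M}(h)\|_{L^\infty}\le\|h\|_{L^\infty}$ is immediate, since every arc-average of $|h|$ is at most $\|h\|_{L^\infty}$. One may now invoke the Marcinkiewicz interpolation theorem between these two endpoint estimates to get the strong $(p,p)$ bound for every $p\in(1,\infty]$ and take $p=2$; alternatively, to stay self-contained, run the interpolation by hand at $p=2$. Splitting $h=h\,\mathbf{1}_{\{|h|>\lambda/2\}}+h\,\mathbf{1}_{\{|h|\le\lambda/2\}}$ and using sublinearity of $\mathcal{M}$ gives $\{\mathcal{M}(h)>\lambda\}\subset\{\mathcal{M}(h\,\mathbf{1}_{\{|h|>\lambda/2\}})>\lambda/2\}$, so by Step 1 and the layer-cake formula,
\[
    \|\mathcal{M}(h)\|_{L^2}^2 = 2\int_0^\infty\!\lambda\,\bigl|\{\mathcal{M}(h)>\lambda\}\bigr|\,\mathrm{d}\lambda
    \le 2\int_0^\infty\!\lambda\cdot\frac{2\cdot 3}{\lambda}\int_{\{|h|>\lambda/2\}}\!|h|\,\mathrm{d}x\,\mathrm{d}\lambda
    = 24\int_{\mathbb{S}^1}|h|^2\,\mathrm{d}x,
\]
where the last equality is Fubini (the $\lambda$-integral of $\mathbf{1}_{\{\lambda<2|h(x)|\}}$ equals $2|h(x)|$). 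This yields the assertion with $C_{\mathrm{HL}}=\sqrt{24}$.

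\smallskip
The only genuine obstacle is the covering lemma on $\mathbb{S}^1$: one must ensure the selected arcs are proper so that dilation and the Vitali selection behave as on an interval, which is exactly what the reduction to $\lambda>\|h\|_{L^1}$ secures; once that is arranged, the selection procedure and the bookkeeping above are entirely routine, and Step 2 is elementary.
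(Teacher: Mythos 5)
The paper offers no proof of this proposition at all: it is stated as a classical fact with a citation to Stein, so there is nothing to compare against except the textbook argument, and your write-up is exactly that standard argument (weak type $(1,1)$ via a covering lemma, the trivial $L^\infty$ bound, and a layer-cake interpolation at $p=2$). The reasoning and the arithmetic (including $C_{\mathrm{HL}}=\sqrt{24}$ from your constants) are correct. One small imprecision: the Vitali covering lemma with dilation factor $3$ and a \emph{countable disjoint subfamily whose $3$-dilates cover the whole union} is only valid for finite families; for the possibly uncountable family $\{I_x\}_{x\in E_\lambda}$ you should either pass to a compact subset $K\subset E_\lambda$, extract a finite subcover, and apply the finite $3r$-lemma before taking the supremum over $K$ by inner regularity, or use the $5r$-version (greedy selection by nearly maximal length), which only changes the constants. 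Your observation that the reduction to $\lambda>\|h\|_{L^1}$ forces the arcs to be proper is the right circle-specific point. With that covering step stated correctly, the proof is complete and matches what the cited reference provides.
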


\subsection{Variations}
\label{subsubsec:Variations}

In this subsection, we study the variations of the operators $A$ and $f$ defined in \eqref{eq:abstract_evolution} and establish that both are of class $C^\infty$ on the admissible set $V$.

We begin with the following elementary estimate for the chord length.

\begin{lemma}
    \label{lem:delta}
    Let $\gamma\in H^2$. Then we have
    \begin{align*}
        |\Delta\gamma(x,y)|\le S_1\|\gamma\|_{H^2}d_{\mathbb{S}^1}(x,y)
    \end{align*}
    for any $x,y\in\mathbb{S}^1$, where $\Delta\gamma(x,y) \coloneqq \gamma(y) - \gamma(x)$ and $S_1$ is the embedding constant in \eqref{eq:Sobolev}.
\end{lemma}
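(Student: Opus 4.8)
The plan is to reduce the inequality to the elementary fact that the chord length of a $C^1$ curve is bounded by its arc length times the sup-norm of the tangent vector, and then to absorb the tangent bound into $\|\gamma\|_{H^2}$ via the Sobolev embedding $H^2\hookrightarrow C^1$ recorded in \eqref{eq:Sobolev}. The only mild care needed is bookkeeping on the universal cover of $\mathbb{S}^1$, so that the geodesic between $x$ and $y$ is realized by an honest straight segment of Euclidean length $d_{\mathbb{S}^1}(x,y)$ along which the fundamental theorem of calculus applies.

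First I would fix $x\neq y$ in $\mathbb{S}^1$ and lift $\gamma$ to a $1$-periodic map $\tilde\gamma\in C^1(\mathbb{R};\mathbb{R}^2)$, which is legitimate since $H^2(\mathbb{S}^1)\hookrightarrow C^1(\mathbb{S}^1)$. Fixing a lift $\hat x\in\mathbb{R}$ of $x$, I would choose the lift $\hat y\in\mathbb{R}$ of $y$ realizing the minimum in the definition of the geodesic distance, i.e.\ with $|\hat y-\hat x|=d_{\mathbb{S}^1}(x,y)$; this is possible because the minimum over $k\in\mathbb{Z}$ is attained, and $\tilde\gamma(\hat y)=\gamma(y)$ by $1$-periodicity. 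Applying the fundamental theorem of calculus along the segment $t\mapsto(1-t)\hat x+t\hat y$ then gives
\[
    \Delta\gamma(x,y)=\tilde\gamma(\hat y)-\tilde\gamma(\hat x)=(\hat y-\hat x)\int_0^1\tilde\gamma'\bigl((1-t)\hat x+t\hat y\bigr)\,\mathrm{d}t,
\]
and taking Euclidean norms, using $|\hat y-\hat x|=d_{\mathbb{S}^1}(x,y)$ and $\|\tilde\gamma'\|_{C^0(\mathbb{R})}=\|\partial_x\gamma\|_{C^0}$ by periodicity, yields
\[
    |\Delta\gamma(x,y)|\le\|\partial_x\gamma\|_{C^0}\,d_{\mathbb{S}^1}(x,y).
\]

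Finally, since $\|\partial_x\gamma\|_{C^0}\le\|\gamma\|_{C^1}$ and the embedding \eqref{eq:Sobolev} with $k=1$, $\alpha=0$ gives $\|\gamma\|_{C^1}\le S_1\|\gamma\|_{H^2}$, the claimed bound $|\Delta\gamma(x,y)|\le S_1\|\gamma\|_{H^2}\,d_{\mathbb{S}^1}(x,y)$ follows, the case $x=y$ being trivial. I expect no genuine obstacle here: the estimate is a one-line consequence of the fundamental theorem of calculus and a standard embedding, and its role downstream is to serve as the upper counterpart of the lower bounds \eqref{eq:diff_LB} and \eqref{eq:gamma'_LB} when controlling the singular kernel $K(\gamma)$ appearing in the nonlocal forcing $F(\gamma)$.
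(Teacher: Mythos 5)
Your proposal is correct and follows essentially the same route as the paper: integrate $\partial_x\gamma$ along the geodesic arc (equivalently, the minimizing lift on the universal cover), bound by $\|\partial_x\gamma\|_{C^0}\,d_{\mathbb{S}^1}(x,y)$, and conclude via the embedding $H^2\hookrightarrow C^1$ with constant $S_1$. The extra care with the lift is a harmless elaboration of the paper's ``shorter arc'' argument.
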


\begin{proof}
    Let $\Gamma_{x,y} \subset \mathbb{S}^1$ be the shorter arc connecting $x$ and $y$ (i.e., the geodesic segment), whose length corresponds to $d_{\mathbb{S}^1}(x,y)$.
    By the fundamental theorem of calculus along the curve, we have
    \begin{align*}
        |\gamma(x) - \gamma(y)|
        &= \left| \int_{\Gamma_{x,y}} \partial_z \gamma(z) \, \mathrm{d}z \right|
        \le \sup_{z \in \mathbb{S}^1} |\partial_z \gamma(z)| \int_{\Gamma_{x,y}} \mathrm{d}z
        = \|\gamma'\|_{C^0} d_{\mathbb{S}^1}(x,y).
    \end{align*}
    Using the Sobolev embedding estimate $\|\gamma'\|_{C^0} \le \|\gamma\|_{C^1} \le S_1 \|\gamma\|_{H^2}$, we obtain the desired inequality.
\end{proof}

The following lemma ensures that the admissible set $V$ is open.

\begin{lemma}
    \label{lem:V_open}
    The set $V$ is open in $H^2$.
\end{lemma}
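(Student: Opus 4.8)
The plan is to show that $V$ is open by exhibiting, for any $\bar\gamma \in V$, an explicit radius $\varepsilon > 0$ such that $B_\varepsilon(\bar\gamma) \subset V$; that is, every $\eta \in H^2$ with $\|\eta - \bar\gamma\|_{H^2} < \varepsilon$ still satisfies $|\eta|_\ast > 0$. The natural quantity to estimate is the difference of chord-arc constants $||\eta|_\ast - |\bar\gamma|_\ast|$, which I will bound by a multiple of $\|\eta - \bar\gamma\|_{H^2}$ using the Sobolev embedding $H^2 \hookrightarrow C^1$ and Lemma~\ref{lem:delta}.

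Concretely, first I would fix $\bar\gamma \in V$ and set $m \coloneqq |\bar\gamma|_\ast > 0$. For arbitrary $x \neq y$ in $\mathbb{S}^1$, write
\begin{align*}
    \frac{|\eta(x) - \eta(y)|}{d_{\mathbb{S}^1}(x,y)}
    \ge \frac{|\bar\gamma(x) - \bar\gamma(y)|}{d_{\mathbb{S}^1}(x,y)} - \frac{|(\eta - \bar\gamma)(x) - (\eta - \bar\gamma)(y)|}{d_{\mathbb{S}^1}(x,y)}.
\end{align*}
The first term is bounded below by $m$ by definition of $|\bar\gamma|_\ast$. For the second term, apply Lemma~\ref{lem:delta} to the curve $\eta - \bar\gamma \in H^2$ to get
\begin{align*}
    \frac{|(\eta - \bar\gamma)(x) - (\eta - \bar\gamma)(y)|}{d_{\mathbb{S}^1}(x,y)} \le S_1 \|\eta - \bar\gamma\|_{H^2}.
\end{align*}
Taking the infimum over $x \neq y$ yields $|\eta|_\ast \ge m - S_1 \|\eta - \bar\gamma\|_{H^2}$. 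Therefore, choosing $\varepsilon \coloneqq m / (2 S_1)$, any $\eta \in B_\varepsilon(\bar\gamma)$ satisfies $|\eta|_\ast \ge m/2 > 0$, so $\eta \in V$. Since $V \subset H^2$ and every point of $V$ has an $H^2$-ball around it contained in $V$, the set $V$ is open.

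This argument is essentially routine; there is no serious obstacle. The only point requiring a small amount of care is the Lipschitz-type continuity of $\gamma \mapsto |\gamma|_\ast$, which follows cleanly once one recognizes that $|\gamma|_\ast$ is an infimum of the $1$-homogeneous functionals $\gamma \mapsto |\gamma(x)-\gamma(y)|/d_{\mathbb{S}^1}(x,y)$ and that Lemma~\ref{lem:delta} provides exactly the uniform-in-$(x,y)$ bound needed to estimate their variation. One may alternatively phrase the conclusion without choosing an explicit $\varepsilon$, simply noting that $\eta \mapsto |\eta|_\ast$ is continuous (indeed Lipschitz) from $H^2$ to $\mathbb{R}$, so $V = \{\,|\cdot|_\ast > 0\,\}$ is the preimage of an open set.
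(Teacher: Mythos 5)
Your proof is correct and follows exactly the paper's argument: the triangle inequality combined with Lemma~\ref{lem:delta} applied to $\eta - \bar\gamma$ gives $|\eta|_\ast \ge |\bar\gamma|_\ast - S_1\|\eta-\bar\gamma\|_{H^2}$, and the radius $\varepsilon = |\bar\gamma|_\ast/(2S_1)$ is the same one the paper chooses. No issues.
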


\begin{proof}
    Let $\gamma \in V$. Then, there exists a constant $\delta > 0$ such that $|\gamma|_\ast \ge \delta$.
    Set $\varepsilon \coloneqq \delta/(2S_1)$.
    Choose any $\eta \in B_\varepsilon(\gamma)$, and let $x, y \in \mathbb{S}^1$ with $x \neq y$.
    Then, the lower bound \eqref{eq:diff_LB} and Lemma~\ref{lem:delta} imply that
    \begin{align*}
        |\Delta\eta(x,y)|
        &\ge |\Delta\gamma(x,y)| - |\Delta(\eta - \gamma)(x,y)| \\
        &\ge \left(|\gamma|_\ast - S_1 \|\eta - \gamma\|_{H^2} \right) d_{\mathbb{S}^1}(x,y)
        > \frac{\delta}{2} d_{\mathbb{S}^1}(x,y),
    \end{align*}
    which implies that $|\eta|_\ast \ge \delta / 2$, i.e., $\eta \in V$.
    Thus, $V$ is open in $H^2$.
\end{proof}

To handle the variations, we rely on the following algebraic inequalities.

\begin{lemma}
    \label{lem:fundamental_estimate}
    Let $n, m \in \mathbb{N}$, $a, b \in \mathbb{R}$, and $z, w, r_i, s_i \in \mathbb{R}^n \setminus \{0\}$ ($i = 1, \ldots, 2m$).
    Then, the following inequalities hold:
    \begin{align*}
        \left|\frac{1}{a^m}-\frac{1}{b^m}\right|&\le\left(\sum_{i=1}^m|b|^{-i}|a|^{-m+i-1}\right)|a-b|,\\
        \left|\prod_{i=1}^m(z\cdot r_i)-\prod_{i=1}^m(w\cdot r_i)\right|&\le\left(\sum_{i=1}^m|w|^{i-1}|z|^{m-i}\right)|z-w|\prod_{i=1}^m|r_i|,\\
        \left|\prod_{i=1}^m(z\cdot r_i)-\prod_{i=1}^m(z\cdot s_i)\right|&\le|z|^m\sum_{i=1}^m\left(\prod_{j=1}^{i-1}|s_j|\prod_{j=i+1}^m|r_j|\right)|r_i-s_i|,\\
        \left|\prod_{i=1}^m(r_{2i-1}\cdot r_{2i})-\prod_{i=1}^m(s_{2i-1}\cdot s_{2i})\right|&\le\sum_{i=1}^m\left(\prod_{j=1}^{2(i-1)}|s_j|\prod_{j=2i+1}^{2m}|r_j|\right)\left(|r_{2i}||r_{2i-1}-s_{2i-1}|+|s_{2i-1}||r_{2i}-s_{2i}|\right).
    \end{align*}
\end{lemma}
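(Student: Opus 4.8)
The plan is to prove all four inequalities by a single two-step recipe: a \emph{telescoping decomposition} of the difference of products, followed by the triangle inequality together with the Cauchy--Schwarz bound $|z\cdot r|\le |z|\,|r|$ for the bilinear factors. In each case the sum appearing on the right-hand side is exactly what one obtains by converting the first product into the second \emph{one factor at a time}, so the identification of terms is automatic once the hybrid products are written down.

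For the first inequality I would begin from the elementary factorization $x^m-y^m=(x-y)\sum_{i=0}^{m-1}x^i y^{m-1-i}$ applied with $x\coloneqq a^{-1}$ and $y\coloneqq b^{-1}$, then use $a^{-1}-b^{-1}=(b-a)(ab)^{-1}$ to rewrite $a^{-m}-b^{-m}=(b-a)\sum_{i=0}^{m-1}a^{-(i+1)}b^{-(m-i)}$. Taking absolute values and re-indexing via $j\coloneqq i+1$ gives $\bigl|\tfrac{1}{a^m}-\tfrac{1}{b^m}\bigr|\le|a-b|\sum_{j=1}^m|a|^{-j}|b|^{-(m-j+1)}$, and the further substitution $j\mapsto m-j+1$ — legitimate because the left-hand side is symmetric in $a,b$ — turns this into the claimed sum $\sum_{i=1}^m|b|^{-i}|a|^{-m+i-1}$. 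The hypotheses $a,b\neq0$ are used only to make these negative powers meaningful.

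For the second and third inequalities I would introduce the hybrid products $P_k\coloneqq\prod_{i=1}^k(w\cdot r_i)\prod_{i=k+1}^m(z\cdot r_i)$ and $Q_k\coloneqq\prod_{i=1}^k(z\cdot s_i)\prod_{i=k+1}^m(z\cdot r_i)$, noting that $P_0,Q_0$ are the first products in the respective statements and $P_m,Q_m$ the second. Writing $P_0-P_m=\sum_{k=1}^m(P_{k-1}-P_k)$, the $k$-th consecutive difference isolates the single scalar $((z-w)\cdot r_k)$ (respectively $(z\cdot(r_k-s_k))$) multiplied by the untouched factors; bounding the isolated scalar by Cauchy--Schwarz and each remaining factor by $|w|\,|r_i|$, $|z|\,|r_i|$ or $|z|\,|s_i|$ produces precisely the $k$-th summand $|w|^{k-1}|z|^{m-k}|z-w|\prod_i|r_i|$ (respectively $|z|^m\bigl(\prod_{j<k}|s_j|\prod_{j>k}|r_j|\bigr)|r_k-s_k|$). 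Summing over $k$ yields the stated estimates. The fourth inequality follows the same pattern with the hybrid $R_k\coloneqq\prod_{i=1}^k(s_{2i-1}\cdot s_{2i})\prod_{i=k+1}^m(r_{2i-1}\cdot r_{2i})$, the only extra wrinkle being that each consecutive difference $r_{2k-1}\cdot r_{2k}-s_{2k-1}\cdot s_{2k}$ is split once more as $(r_{2k-1}-s_{2k-1})\cdot r_{2k}+s_{2k-1}\cdot(r_{2k}-s_{2k})$ before Cauchy--Schwarz; regrouping the side factors through $\prod_{i\le k-1}|s_{2i-1}||s_{2i}|=\prod_{j\le 2(k-1)}|s_j|$ and $\prod_{i\ge k+1}|r_{2i-1}||r_{2i}|=\prod_{j\ge 2k+1}|r_j|$ gives the claimed form.

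I do not anticipate any genuine obstacle here: the content is purely algebraic, and the only points requiring care are the index bookkeeping — in particular the empty-product conventions at the endpoints $k=1$ and $k=m$, and the $a\leftrightarrow b$ re-indexing in the first estimate. Correspondingly, inequalities two through four in fact hold for arbitrary vectors, the nonvanishing hypothesis being relevant only for the negative powers in the first.
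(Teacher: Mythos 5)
Your proposal is correct and follows essentially the same route as the paper: the elementary factorization of $a^{-m}-b^{-m}$ for the first estimate, and a telescoping decomposition into hybrid products followed by the triangle and Cauchy--Schwarz inequalities for the remaining three (the paper writes out only the second in detail and asserts the rest are similar). The only superfluous remark is your appeal to symmetry in $a,b$ to justify the substitution $j\mapsto m-j+1$; that step is a plain re-indexing of a finite sum and needs no such justification.
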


\begin{proof}
    The first inequality follows from the factorization $a^{-m} - b^{-m} = (b-a) \sum_{k=0}^{m-1} a^{-(m-k)} b^{-(k+1)}$.
    The other inequalities follow from the telescoping sum argument and the triangle inequality.
    For instance, the second inequality is derived as follows:
    \begin{align*}
        \left|\prod_{i=1}^m(z\cdot r_i)-\prod_{i=1}^m(w\cdot r_i)\right|
        &= \left|\sum_{k=1}^m \left( \prod_{i=1}^{k-1} (w \cdot r_i) \right) ((z-w) \cdot r_k) \left( \prod_{i=k+1}^m (z \cdot r_i) \right)\right| \\
        &\le \sum_{k=1}^m |w|^{k-1} |z|^{m-k} |z-w| \prod_{i=1}^m |r_i|.
    \end{align*}
    The remaining inequalities are proven similarly.
\end{proof}

Here and throughout this paper, the product $\prod_{i=n}^m$ is understood to be $1$ whenever $m < n$.
Combining these algebraic estimates with Sobolev embeddings, we obtain the following corollary.

\begin{corollary}
    \label{cor:fundamental_inequality}
    Suppose that $\gamma \in H^2$ satisfies $|\gamma|_\ast \ge \delta$ for some positive constant $\delta$.
    Then, there exist positive constants $C_1 = C_1(m, S_1, \delta)$, $C_2 = C_2(m, S_1, \delta, \|\gamma\|_{H^2})$, $C_3 = C_3(m, S_1, \|\gamma\|_{H^2})$, and $C_4 = C_4(m, S_1)$ such that, for any $\eta \in B_{\delta/(2S_1)}(\gamma)$, $\phi_1, \ldots, \phi_m, \varphi_1, \ldots, \varphi_m \in H^2$, and $x, y \in \mathbb{S}^1$, the following inequalities hold:
    \begin{align*}
        &\left| \frac{1}{|\gamma'|^m} - \frac{1}{|\eta'|^m} \right| \le C_1 \|\gamma - \eta\|_{H^2}, \\
        &\left| \prod_{i=1}^m (\gamma' \cdot \phi_i') - \prod_{i=1}^m (\eta' \cdot \phi_i') \right|
        \le C_2 \|\gamma - \eta\|_{H^2} \prod_{i=1}^m \|\phi_i\|_{H^2}, \\
        &\left| \prod_{i=1}^m (\gamma' \cdot \phi_i') - \prod_{i=1}^m (\gamma' \cdot \varphi_i') \right|
        \le C_3 \sum_{i=1}^m \left( \prod_{j=1}^{i-1} \|\varphi_j\|_{H^2} \prod_{j=i+1}^m \|\phi_j\|_{H^2} \right) \|\phi_i - \varphi_i\|_{H^2}, \\
        &\left| \prod_{i=1}^m (\phi_{2i-1}' \cdot \phi_{2i}') - \prod_{i=1}^m (\varphi_{2i-1}' \cdot \varphi_{2i}') \right| \\
        &\le C_4 \sum_{i=1}^m \left( \prod_{j=1}^{2(i-1)} \|\varphi_j\|_{H^2} \prod_{j=2i+1}^{2m} \|\phi_j\|_{H^2} \right) 
        \left( \|\phi_{2i}\|_{H^2} \|\phi_{2i-1} - \varphi_{2i-1}\|_{H^2} + \text{sym.} \right), \\
        &\left| \frac{1}{|\Delta\gamma|^m} - \frac{1}{|\Delta\eta|^m} \right| \le C_1 \|\gamma - \eta\|_{H^2} d_{\mathbb{S}^1}(x,y)^{-m}, \\
        &\left| \prod_{i=1}^m (\Delta\gamma \cdot \Delta\phi_i) - \prod_{i=1}^m (\Delta\eta \cdot \Delta\phi_i) \right|
        \le C_2 \|\gamma - \eta\|_{H^2} \left( \prod_{i=1}^m \|\phi_i\|_{H^2} \right) d_{\mathbb{S}^1}(x,y)^{2m}, \\
        &\left| \prod_{i=1}^m (\Delta\gamma \cdot \Delta\phi_i) - \prod_{i=1}^m (\Delta\gamma \cdot \Delta\varphi_i) \right| \le C_3 \sum_{i=1}^m \left( \prod_{j=1}^{i-1} \|\varphi_j\|_{H^2} \prod_{j=i+1}^m \|\phi_j\|_{H^2} \right) \|\phi_i - \varphi_i\|_{H^2} d_{\mathbb{S}^1}(x,y)^{2m}, \\
        &\left| \prod_{i=1}^m (\Delta\phi_{2i-1} \cdot \Delta\phi_{2i}) - \prod_{i=1}^m (\Delta\varphi_{2i-1} \cdot \Delta\varphi_{2i}) \right| \\
        &\le C_4 \sum_{i=1}^m \left( \prod_{j=1}^{2(i-1)} \|\varphi_j\|_{H^2} \prod_{j=2i+1}^{2m} \|\phi_j\|_{H^2} \right) \left( \|\phi_{2i}\|_{H^2} \|\phi_{2i-1} - \varphi_{2i-1}\|_{H^2} + \text{sym.} \right) d_{\mathbb{S}^1}(x,y)^{2m}.
    \end{align*}
\end{corollary}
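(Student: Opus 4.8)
The plan is to reduce everything to the algebraic inequalities of Lemma~\ref{lem:fundamental_estimate}, applied pointwise in $x$ for the first four estimates and at each pair $(x,y)$ with $x\neq y$ for the last four, and then to bound every resulting factor using three ingredients. The first ingredient is the Sobolev embedding \eqref{eq:Sobolev} in the form $|\partial_x\phi(x)|\le\|\phi\|_{C^1}\le S_1\|\phi\|_{H^2}$, valid for any $\phi\in H^2$. The second is the pair of non-degeneracy bounds $|\partial_x\gamma(x)|\ge|\gamma|_\ast\ge\delta$ from \eqref{eq:gamma'_LB} and $|\Delta\gamma(x,y)|\ge\delta\,d_{\mathbb{S}^1}(x,y)$ from \eqref{eq:diff_LB}. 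The third is the chord estimate of Lemma~\ref{lem:delta}, which supplies the upper bound $|\Delta\phi(x,y)|\le S_1\|\phi\|_{H^2}d_{\mathbb{S}^1}(x,y)$ and, applied to differences, both $|\Delta\gamma(x,y)-\Delta\eta(x,y)|\le S_1\|\gamma-\eta\|_{H^2}d_{\mathbb{S}^1}(x,y)$ and $|\Delta\phi_i(x,y)-\Delta\varphi_i(x,y)|\le S_1\|\phi_i-\varphi_i\|_{H^2}d_{\mathbb{S}^1}(x,y)$. For the perturbed curve $\eta\in B_{\delta/(2S_1)}(\gamma)$ I would additionally invoke the computation in the proof of Lemma~\ref{lem:V_open}, which gives $|\eta|_\ast\ge\delta/2$, hence $|\partial_x\eta(x)|\ge\delta/2$, $|\Delta\eta(x,y)|\ge(\delta/2)d_{\mathbb{S}^1}(x,y)$, and $\|\eta\|_{H^2}\le\|\gamma\|_{H^2}+\delta/(2S_1)$.

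With these bounds in hand, each of the eight inequalities follows by a direct substitution into the matching line of Lemma~\ref{lem:fundamental_estimate}. For the pointwise group I would take, line by line: $a=|\partial_x\gamma(x)|$, $b=|\partial_x\eta(x)|$; then $z=\partial_x\gamma(x)$, $w=\partial_x\eta(x)$, $r_i=\partial_x\phi_i(x)$; then $z=\partial_x\gamma(x)$, $r_i=\partial_x\phi_i(x)$, $s_i=\partial_x\varphi_i(x)$; and finally $r_i=\partial_x\phi_i(x)$, $s_i=\partial_x\varphi_i(x)$. The chord group uses exactly the same substitutions with $\partial_x\gamma(x)$, $\partial_x\eta(x)$, $\partial_x\phi_i(x)$, $\partial_x\varphi_i(x)$ replaced by $\Delta\gamma(x,y)$, $\Delta\eta(x,y)$, $\Delta\phi_i(x,y)$, $\Delta\varphi_i(x,y)$. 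Estimating the factors then delivers the stated constants and their dependences: in the first and fifth inequalities only negative powers of $|\partial_x\gamma|,|\partial_x\eta|$ (resp.\ $|\Delta\gamma|,|\Delta\eta|$) and the embedding constant occur, so $C_1=C_1(m,S_1,\delta)$; in the fourth and eighth no factor involving $\gamma$ enters, so $C_4=C_4(m,S_1)$; the second, third, sixth and seventh inequalities contain nonnegative powers of $|\partial_x\gamma|$ (resp.\ $|\Delta\gamma|$) bounded by $S_1\|\gamma\|_{H^2}$, producing the dependence on $\|\gamma\|_{H^2}$, with the extra $\delta$-dependence of $C_2$ coming from $|\partial_x\eta(x)|\le S_1\|\eta\|_{H^2}\le S_1\|\gamma\|_{H^2}+\delta/2$.

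I expect the only step requiring genuine care --- the main, if modest, obstacle --- to be the book-keeping of the powers of $d\coloneqq d_{\mathbb{S}^1}(x,y)$ in the chord group, since there each factor carries its own power of $d$ and these must combine precisely to the exponent claimed. For the fifth inequality the sum $\sum_{i=1}^m|\Delta\eta|^{-i}|\Delta\gamma|^{-m+i-1}$ contributes $d^{-m-1}$ while the factor $|\Delta\gamma-\Delta\eta|$ contributes $d^{+1}$, for a net $d^{-m}$; for the sixth, $\prod_i|\Delta\phi_i|$ contributes $d^{m}$, $\sum_i|\Delta\eta|^{i-1}|\Delta\gamma|^{m-i}$ contributes $d^{m-1}$, and $|\Delta\gamma-\Delta\eta|$ contributes $d$, for a net $d^{2m}$; the seventh and eighth are handled in the same way and each yields $d^{2m}$. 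Because $|\gamma|_\ast>0$ and $|\eta|_\ast>0$, every lower bound used is strictly positive for $x\neq y$, so no spurious singularity is introduced along the way, and assembling the eight estimates completes the proof.
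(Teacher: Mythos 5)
Your proposal is correct and follows exactly the route the paper intends: the corollary is stated in the paper without proof as a direct consequence of Lemma~\ref{lem:fundamental_estimate} combined with the Sobolev embedding \eqref{eq:Sobolev}, the chord-arc bounds \eqref{eq:diff_LB}--\eqref{eq:gamma'_LB}, Lemma~\ref{lem:delta}, and the bound $|\eta|_\ast\ge\delta/2$ from the proof of Lemma~\ref{lem:V_open}, which is precisely the substitution-and-bookkeeping argument you carry out. Your tracking of the powers of $d_{\mathbb{S}^1}(x,y)$ and of the dependences of $C_1,\dots,C_4$ is accurate.
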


For a general map $\mathcal{H}\colon V\to W$, its first variation $\mathcal{D}\mathcal{H}(\gamma)[\phi]$ at $\gamma\in V$ in the direction $\phi\in V$ is defined by
\begin{align*}
    \mathcal{D}\mathcal{H}(\gamma)[\phi] \coloneqq \left.\frac{\mathrm{d}}{\mathrm{d}\varepsilon}\mathcal{H}(\gamma+\varepsilon\phi)\right|_{\varepsilon=0}.
\end{align*}
Furthermore, for $k\in\mathbb{N}$, the $k$-th variation $\mathcal{D}^k\mathcal{H}(\gamma)[\phi_1,\ldots,\phi_k]$ of $\mathcal{H}$ with $\phi_1,\ldots,\phi_k\in V$ is defined recursively by
\begin{align*}
    \mathcal{D}^k\mathcal{H}(\gamma)[\phi_1,\ldots,\phi_k] \coloneqq \left.\frac{\mathrm{d}}{\mathrm{d}\varepsilon}\mathcal{D}^{k-1}\mathcal{H}(\gamma+\varepsilon\phi_k)[\phi_1,\ldots,\phi_{k-1}]\right|_{\varepsilon=0}.
\end{align*}
Note that $\mathcal{D}^k\mathcal{H}(\gamma)[\phi_1,\ldots,\phi_k]$ is linear in each variable $\phi_i$ ($i=1,\ldots,k$) and symmetric with respect to $\phi_1,\ldots,\phi_k$.

\begin{proposition}
    \label{prop:A_C-inf}
    $A\in C^\infty(V;\mathcal{B}(H^3;H^1))$.
\end{proposition}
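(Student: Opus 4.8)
The plan is to show that the map $A(\gamma) = \frac{1}{\pi|\partial_x\gamma|^2}\partial_x^2$ is smooth as a map from $V \subset H^2$ into $\mathcal{B}(H^3, H^1)$ by factoring it through elementary operations whose smoothness is readily established. Write $A(\gamma) = m(\gamma)\,\partial_x^2$, where $m(\gamma) \coloneqq \pi^{-1}|\partial_x\gamma|^{-2} = \pi^{-1}(\partial_x\gamma \cdot \partial_x\gamma)^{-1}$ is a scalar function on $\mathbb{S}^1$. Since $\partial_x^2 \colon H^3 \to H^1$ is a fixed bounded linear operator and multiplication $H^1 \times H^1 \to H^1$ is continuous (the Banach-algebra property), it suffices to prove that $\gamma \mapsto m(\gamma)$ is a $C^\infty$ map from $V$ into $H^1(\mathbb{S}^1)$; then $A = (\text{mult by } m(\gamma)) \circ \partial_x^2$ is a composition of a smooth map into the multiplier space $\mathcal{B}(H^1)$ with a fixed bounded operator, hence $C^\infty$.

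To analyze $\gamma \mapsto m(\gamma)$, I would decompose it as $\gamma \xmapsto{\ \partial_x\ } \partial_x\gamma \in H^1 \xmapsto{\ q\mapsto q\cdot q\ } |\partial_x\gamma|^2 \in H^1 \xmapsto{\ \iota\ } |\partial_x\gamma|^{-2} \in H^1$, scaled by $\pi^{-1}$. The first arrow is bounded linear, hence $C^\infty$. The second, $N(q) \coloneqq q\cdot q$, is a continuous bilinear form composed with the diagonal, so it is a polynomial map and therefore $C^\infty$, with $\mathcal{D}N(q)[\phi] = 2q\cdot\phi$ and $\mathcal{D}^2 N(q)[\phi,\psi] = 2\phi\cdot\psi$ and higher variations vanishing; the relevant continuity estimates are exactly the algebra-product estimates recorded in Corollary~\ref{cor:fundamental_inequality}. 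The third arrow is the substance of the proof: I must show that the pointwise reciprocal $\iota\colon w \mapsto 1/w$ is a smooth map on the open subset $\mathcal{U} \coloneqq \{w \in H^1(\mathbb{S}^1) : \inf_{\mathbb{S}^1} w > 0\}$ of $H^1$, which contains $|\partial_x\gamma|^2$ for $\gamma \in V$ by the lower bound \eqref{eq:gamma'_LB}. For this I would verify openness of $\mathcal{U}$ (using $H^1 \hookrightarrow C^0$), then show that $\iota$ is Fréchet differentiable with $\mathcal{D}\iota(w)[\phi] = -\phi/w^2$, checking that this is a bounded linear map on $H^1$ and depends continuously on $w$; iterating, $\mathcal{D}^k\iota(w)[\phi_1,\dots,\phi_k] = (-1)^k k!\, \phi_1\cdots\phi_k / w^{k+1}$, and each such multilinear map is bounded on $H^1$ by the Banach-algebra property together with the uniform lower bound $\inf w \ge \delta^2 > 0$ on a ball $B_\varepsilon(\bar\gamma)$. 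The key quantitative inputs are the estimates $\|1/w - 1/\tilde w\|_{H^1} \le C\|w - \tilde w\|_{H^1}$ and their higher-order analogues, which follow from the first inequality of Lemma~\ref{lem:fundamental_estimate} (with $m=1$) together with control of $\partial_x(1/w) = -\partial_x w / w^2$; these are precisely the type of bounds assembled in Corollary~\ref{cor:fundamental_inequality}.

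Having established that $m \in C^\infty(V; H^1)$, I conclude by invoking the chain rule for Fréchet-smooth maps between Banach spaces: the composition of $C^\infty$ maps is $C^\infty$. Concretely, $A(\gamma)\phi = m(\gamma)\,\partial_x^2\phi$ with $m\colon V \to H^1$ smooth and $\partial_x^2 \colon H^3 \to H^1$ fixed bounded linear; the bilinear multiplication $H^1 \times H^1 \to H^1$, $(f,g)\mapsto fg$, is bounded, so $(\gamma,\phi) \mapsto m(\gamma)\partial_x^2\phi$ is smooth in $\gamma$ with values in $\mathcal{B}(H^3,H^1)$, and its $k$-th variation in $\gamma$ is $\mathcal{D}^k A(\gamma)[\phi_1,\dots,\phi_k]\psi = \mathcal{D}^k m(\gamma)[\phi_1,\dots,\phi_k]\,\partial_x^2\psi$, with operator norm bounded by $\|\mathcal{D}^k m(\gamma)[\phi_1,\dots,\phi_k]\|_{H^1}$ times the norm of $\partial_x^2$.

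The main obstacle I anticipate is the careful verification that the reciprocal map $w \mapsto 1/w$ is genuinely Fréchet $C^\infty$ (not merely Gateaux-differentiable) as a map into $H^1$, which requires controlling the $H^1$-norm — in particular the derivative term $\partial_x(1/w)$ — of the remainders in the Taylor expansion uniformly over small balls, and confirming that the candidate multilinear variations are bounded multilinear operators on $H^1$. All of this reduces to the algebra-product and reciprocal estimates of Lemma~\ref{lem:fundamental_estimate} and Corollary~\ref{cor:fundamental_inequality} combined with the Banach-algebra property of $H^1(\mathbb{S}^1)$ and the uniform positivity $\inf_{\mathbb{S}^1}|\partial_x\gamma|^2 \ge |\gamma|_\ast^2$ on the admissible set; once these bounds are in hand, the smoothness follows by a standard induction on the order of the variation.
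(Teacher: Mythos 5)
Your argument is correct, but it takes a genuinely different route from the paper's in the one step that carries all the weight, namely the smoothness of the coefficient map $\gamma\mapsto|\partial_x\gamma|^{-2}$ from $V$ into $H^1$. The paper proves this by writing down, via induction on $k$, an explicit closed-form expression for the $k$-th variation of $B(\gamma)=|\gamma'|^{-2}$ (a sum of terms $|\gamma'|^{-2(k+1)+2\ell}P_\ell(\gamma',\phi_1',\dots,\phi_k')$ with homogeneous polynomials $P_\ell$) and then verifying its joint local Lipschitz continuity in $(\gamma,\phi_1,\dots,\phi_k)$ through the algebraic inequalities of Corollary~\ref{cor:fundamental_inequality}, applied once to the $C^0$-norm and once to the $L^2$-norm of the $x$-derivative. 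You instead factor the coefficient map as a composition of bounded linear differentiation, the quadratic map $q\mapsto q\cdot q$, and the reciprocal map on the open set $\{w\in H^1:\inf w>0\}$, and invoke the chain rule, reducing everything to the standard fact that inversion is $C^\infty$ near invertible elements of the Banach algebra $H^1(\mathbb{S}^1)$ — made applicable here by the uniform lower bound $|\partial_x\gamma|\ge|\gamma|_\ast$ of \eqref{eq:gamma'_LB}. Your route is shorter and avoids the combinatorial bookkeeping of the explicit variation formula; the paper's route pays that cost up front but reuses the same template for $|\gamma'|^{-1}$ in Lemma~\ref{lem:tau-nu_C-inf}, and the Corollary~\ref{cor:fundamental_inequality} machinery it exercises is what later handles the singular nonlocal term $F$, where no clean compositional factorization is available. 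Both proofs conclude identically: multiplication by an $H^1$ function is a bounded linear map into $\mathcal{B}(H^1)$, so post-composing the smooth coefficient map with the fixed operator $\partial_x^2\in\mathcal{B}(H^3,H^1)$ yields $A\in C^\infty(V;\mathcal{B}(H^3;H^1))$. The only point you should spell out when writing this up is the Fr\'echet (not merely Gateaux) differentiability of the reciprocal map, which you correctly flag; the remainder identity $\frac{1}{w+\phi}-\frac{1}{w}+\frac{\phi}{w^2}=\frac{\phi^2}{w^2(w+\phi)}$ together with the Banach-algebra bound and the uniform positivity of $w$ on a small $H^1$-ball settles it.
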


\begin{proof}
    Let $\gamma\in V$ and set $B(\gamma)\coloneqq|\gamma'|^{-2}$.
    Since $\gamma\in V$, there exists a constant $\delta>0$ such that $|\gamma|_\ast\ge\delta$, which implies $|\gamma'|\ge\delta$ by \eqref{eq:gamma'_LB}.
    By induction on $k\in\mathbb{Z}_{\ge0}$, we see that the $k$-th variation $\mathcal{D}^kB(\gamma)[\phi_1,\ldots,\phi_k]$ with $\phi_1,\ldots,\phi_k\in H^2$ exists and is explicitly given by
    \begin{align*}
        \mathcal{D}^kB(\gamma)[\phi_1,\ldots,\phi_k]=\frac{1}{|\gamma'|^{2(k+1)}}\sum_{\ell=0}^{\lfloor\frac{k}{2}\rfloor}a_{k,\ell}|\gamma'|^{2\ell}P_\ell(\gamma',\phi_1',\ldots,\phi_k')
    \end{align*}
    with some constants $a_{k,\ell}$.
    Here, $P_\ell(z,r_1,\dots,r_k)$ is a homogeneous polynomial of degree $2(k-\ell)$ determined by the monomial $\prod_{i=1}^{k-2\ell}(z\cdot r_i)\,\prod_{j=1}^{\ell}(r_{k-2\ell+2j-1}\cdot r_{k-2\ell+2j})$.
    
    Fix $\eta\in B_{\delta/(2S_1)}(\gamma)$ and let $\varphi_1,\ldots,\varphi_k\in H^2$.
    Applying Corollary~\ref{cor:fundamental_inequality} to the explicit form of $\mathcal{D}^k B$, we obtain polynomials $D_{j,i}$ such that:
    \begin{align*}
        &\|\mathcal{D}^kB(\gamma)[\phi_1,\ldots,\phi_k]-\mathcal{D}^kB(\eta)[\varphi_1,\ldots,\varphi_k]\|_{C^0}
        \le D_{1,0}\|\gamma-\eta\|_{H^2}+\sum_{i=1}^kD_{1,i}\|\phi_i-\varphi_i\|_{H^2},\\
        &\|\partial_x\left(\mathcal{D}^kB(\gamma)[\phi_1,\ldots,\phi_k]-\mathcal{D}^kB(\eta)[\varphi_1,\ldots,\varphi_k]\right)\|_{L^2}
        \le D_{2,0}\|\gamma-\eta\|_{H^2}+\sum_{i=1}^kD_{2,i}\|\phi_i-\varphi_i\|_{H^2}.
    \end{align*}
    Since $A(\gamma)=B(\gamma)\partial_x^2$, the operator norm of the difference is bounded by
    \begin{align*}
        &\|\mathcal{D}^kA(\gamma)[\phi_1,\ldots,\phi_k]-\mathcal{D}^kA(\eta)[\varphi_1,\ldots,\varphi_k]\|_{\mathcal{B}(H^3;H^1)}\\
        &\le\sqrt{2}\left(\left(D_{1,0}+D_{2,0}\right)\|\gamma-\eta\|_{H^2}+\sum_{i=1}^k\left(D_{1,i}+D_{2,i}\right)\|\phi_i-\varphi_i\|_{H^2}\right).
    \end{align*}
    This implies the continuity of the $k$-th variation. Thus $A \in C^\infty$.
\end{proof}

Next, we investigate the regularity of the nonlinear term $f$.

\begin{lemma}
    \label{lem:tau-nu_C-inf}
    $\tau,\nu\in C^\infty(V; H^1)$.
\end{lemma}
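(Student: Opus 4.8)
The plan is to reduce everything to the tangent vector and then mirror, verbatim, the argument of Proposition~\ref{prop:A_C-inf}. Since $\nu(\gamma)=\left(\begin{smallmatrix}0&1\\-1&0\end{smallmatrix}\right)\tau(\gamma)$ is obtained from $\tau(\gamma)$ by a fixed bounded linear operator on $H^1$, it suffices to prove $\tau\in C^\infty(V;H^1)$. I write $\tau(\gamma)=N(\gamma)\,\gamma'$ with $N(\gamma)\coloneqq|\gamma'|^{-1}$. The map $\gamma\mapsto\gamma'$ is bounded linear $H^2\to H^1$, hence $C^\infty$ with only its first variation nonzero; and since $H^1(\mathbb{S}^1)$ is a Banach algebra, pointwise multiplication $H^1\times H^1\to H^1$ is bounded bilinear, so the Leibniz rule reduces the problem to controlling the variations of $N\colon V\to H^1$.

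Next I would establish, by induction on $k$ exactly as for $B=|\gamma'|^{-2}$ in Proposition~\ref{prop:A_C-inf}, an explicit normal form
\[
\mathcal{D}^kN(\gamma)[\phi_1,\ldots,\phi_k]=\frac{1}{|\gamma'|^{2k+1}}\sum_{\ell=0}^{\lfloor k/2\rfloor}b_{k,\ell}\,|\gamma'|^{2\ell}\,Q_\ell(\gamma',\phi_1',\ldots,\phi_k'),
\]
with constants $b_{k,\ell}$ and $Q_\ell$ the homogeneous polynomial built from the monomial $\prod_{i=1}^{k-2\ell}(\gamma'\cdot\phi_i')\prod_{j=1}^{\ell}(\phi'_{k-2\ell+2j-1}\cdot\phi'_{k-2\ell+2j})$; the induction step only differentiates the power $|\gamma'|^{-(2k+1)}$ and the individual scalar products, producing terms of the same shape (the odd exponent $2k+1$ replaces the even exponent $2(k+1)$ of the $B$-computation, which is immaterial). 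The Leibniz rule then gives
\[
\mathcal{D}^k\tau(\gamma)[\phi_1,\ldots,\phi_k]=\mathcal{D}^kN(\gamma)[\phi_1,\ldots,\phi_k]\,\gamma'+\sum_{i=1}^k\mathcal{D}^{k-1}N(\gamma)[\phi_1,\ldots,\widehat{\phi_i},\ldots,\phi_k]\,\phi_i',
\]
where $\widehat{\phi_i}$ indicates omission.

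To upgrade mere existence of these variations to continuity on the open set $V$ (which is what $C^\infty$ requires, using Lemma~\ref{lem:V_open}), I would fix $\gamma\in V$ with $|\gamma|_\ast\ge\delta$ and $\eta\in B_{\delta/(2S_1)}(\gamma)$, and estimate $\|\mathcal{D}^k\tau(\gamma)[\phi_1,\ldots,\phi_k]-\mathcal{D}^k\tau(\eta)[\varphi_1,\ldots,\varphi_k]\|_{H^1}$ by controlling the $C^0$-norm and the $L^2$-norm of one $x$-derivative separately, precisely as in Proposition~\ref{prop:A_C-inf}. Each factor $\gamma'\cdot\phi_i'$ and each factor $|\gamma'|^{-m}$ appearing in the explicit formula is differenced using the corresponding inequality of Corollary~\ref{cor:fundamental_inequality}; taking one $x$-derivative by the Leibniz rule brings in at worst a single factor of $\gamma''$ or $\phi_i''$, which lies in $L^2$ and is handled by Hölder's inequality together with the $C^0$-bounds on the remaining factors and the algebra structure of $H^1$. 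This produces a bound of the form $C\bigl(\|\gamma-\eta\|_{H^2}+\sum_i\|\phi_i-\varphi_i\|_{H^2}\bigr)$ with $C$ depending only on $k$, $S_1$, $\delta$, and $\|\gamma\|_{H^2}$, which yields continuity of $\mathcal{D}^k\tau$ for every $k$ and hence $\tau\in C^\infty(V;H^1)$; then $\nu\in C^\infty(V;H^1)$ follows immediately.

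The main obstacle I anticipate is purely bookkeeping: verifying that inductive differentiation of $N(\gamma)=|\gamma'|^{-1}$ preserves the claimed polynomial-over-power-of-$|\gamma'|$ normal form, so that Corollary~\ref{cor:fundamental_inequality} applies term by term, and carefully tracking the one extra $x$-derivative required for the $H^1$- rather than the $C^0$-estimate. No analytic input beyond Corollary~\ref{cor:fundamental_inequality} and the Banach-algebra property of $H^1(\mathbb{S}^1)$ is needed.
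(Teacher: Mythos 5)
Your proposal is correct and follows essentially the same route as the paper: the paper likewise writes $\tau(\gamma)=\tilde B(\gamma)\gamma'$ with $\tilde B(\gamma)=|\gamma'|^{-1}$, derives the same $|\gamma'|^{-(2k+1)}$-times-polynomial normal form for the variations of $\tilde B$ as in Proposition~\ref{prop:A_C-inf}, applies Corollary~\ref{cor:fundamental_inequality}, and concludes via the product rule and the rotation $\nu=\mathcal{R}\tau$. Your write-up simply spells out the Leibniz expansion and the one extra $x$-derivative for the $H^1$-estimate in more detail than the paper does.
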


\begin{proof}
    Let $\tilde{B}(\gamma)\coloneqq|\gamma'|^{-1}$.
    Similarly to the proof of Proposition~\ref{prop:A_C-inf}, the $k$-th variation of $\tilde{B}$ is given by a linear combination of terms involving $|\gamma'|^{-(2k+1)}$ and polynomials $P_\ell$.
    Using Corollary~\ref{cor:fundamental_inequality}, we verify the continuity of $\mathcal{D}^k \tilde{B}$.
    Since $\tau(\gamma)=\tilde{B}(\gamma)\gamma'$, the product rule for variations implies that $\tau \in C^\infty(V; H^1)$.
    The smoothness of $\nu = \mathcal{R}\tau$ follows immediately.
\end{proof}

\begin{corollary}
    \label{cor:kappa}
    Suppose that $\gamma\in H^2$ satisfies $|\gamma|_\ast\ge\delta$.
    Then, for any $k\in\mathbb{Z}_{\ge0}$, the variation of the curvature satisfies:
    \begin{align*}
        \left|\mathcal{D}^k\kappa(\gamma)[\phi_1,\ldots,\phi_k]-\mathcal{D}^k\kappa(\eta)[\varphi_1,\ldots,\varphi_k]\right|
        \le D_{1,0}\|\gamma-\eta\|_{H^2}+\sum_{i=1}^kD_{1,i}|(\phi_i-\varphi_i)''(y)|,
    \end{align*}
    where $D_{1,i}$ are polynomials in norms and pointwise second derivatives.
\end{corollary}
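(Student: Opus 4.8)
The plan is to reduce the estimate to the smoothness of the building blocks of $\kappa$ and to the algebraic Lipschitz bounds of Corollary~\ref{cor:fundamental_inequality}; the one genuinely new feature is that $\kappa$ carries the top-order derivative $\gamma''$, which lies in $L^2$ but not in $C^0$, so the inequality must be kept pointwise in $y$. First I would rewrite the curvature density in a form suited to repeated differentiation: using $\nu(\gamma)=\mathcal R\,\tau(\gamma)$ with $\mathcal R$ the fixed rotation matrix from the definition of $\nu$ and $\tau(\gamma)=|\gamma'|^{-1}\gamma'$, one has
\[
    \kappa(\gamma)=-\frac{\nu(\gamma)\cdot\gamma''}{|\gamma'|}=-\frac{(\mathcal R\gamma')\cdot\gamma''}{|\gamma'|^{2}}=\beta(\gamma)\,m(\gamma),
\]
where $\beta(\gamma)\coloneqq|\gamma'|^{-2}$ is precisely the map whose variations were computed in the proof of Proposition~\ref{prop:A_C-inf}, and $m(\gamma)\coloneqq-(\mathcal R\gamma')\cdot\gamma''$ is a continuous quadratic map of $\gamma$, so that $\mathcal D^{j}m\equiv 0$ for $j\ge 3$, while $\mathcal D^{1}m(\gamma)[\phi]=-(\mathcal R\phi')\cdot\gamma''-(\mathcal R\gamma')\cdot\phi''$ and $\mathcal D^{2}m(\gamma)[\phi_1,\phi_2]=-(\mathcal R\phi_1')\cdot\phi_2''-(\mathcal R\phi_2')\cdot\phi_1''$.

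Next I would expand $\mathcal D^{k}\kappa$ by the Leibniz rule for multilinear variations: $\mathcal D^{k}\kappa(\gamma)[\phi_1,\dots,\phi_k]$ is a finite sum, over subsets $S\subseteq\{1,\dots,k\}$ with $|S|\le 2$, of products $\mathcal D^{k-|S|}\beta(\gamma)[\phi_j:j\notin S]\cdot\mathcal D^{|S|}m(\gamma)[\phi_j:j\in S]$. Substituting the explicit form of $\mathcal D^{j}\beta$ from Proposition~\ref{prop:A_C-inf} — a linear combination of terms $|\gamma'|^{-2\mu}P_\ell(\gamma',\phi_{i_1}',\dots)$ with $P_\ell$ a product of inner products of first derivatives — shows that every summand of $\mathcal D^{k}\kappa(\gamma)[\phi_1,\dots,\phi_k]$ factors as (a) a negative power $|\gamma'|^{-2\mu}$, times (b) a polynomial $P$ in the first derivatives $\gamma',\phi_1',\dots,\phi_k'$, times (c) a single ``curvature bilinear'' factor $(\mathcal R a')\cdot b''$ with $a,b\in\{\gamma,\phi_1,\dots,\phi_k\}$.

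The estimate then follows by telescoping the difference of a generic summand over these three factor types. Types (a) and (b) are controlled in $C^{0}$, hence pointwise, exactly as in Proposition~\ref{prop:A_C-inf} and Lemma~\ref{lem:tau-nu_C-inf}: Corollary~\ref{cor:fundamental_inequality} gives $\bigl||\gamma'|^{-2\mu}-|\eta'|^{-2\mu}\bigr|\le C\|\gamma-\eta\|_{H^2}$ and $|P(\gamma',\phi_\bullet')-P(\eta',\varphi_\bullet')|\le C(\|\gamma-\eta\|_{H^2}+\sum_i\|\phi_i-\varphi_i\|_{H^2})$, with the undifferenced factors bounded by $\delta^{-2\mu}$ (via \eqref{eq:gamma'_LB}) and by Sobolev products of $H^2$-norms. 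For type (c) I would use the identity
\[
    (\mathcal R a_1')\cdot b_1''-(\mathcal R a_2')\cdot b_2''=\bigl(\mathcal R(a_1-a_2)'\bigr)\cdot b_1''+(\mathcal R a_2')\cdot(b_1-b_2)'',
\]
evaluated at $y$ and estimated, using that $\mathcal R$ is orthogonal and $H^{2}\hookrightarrow C^{1}$, by $S_1\|a_1-a_2\|_{H^2}\,|b_1''(y)|+S_1\|a_2\|_{H^2}\,|(b_1-b_2)''(y)|$; here $b_1''(y)$ is a pointwise second derivative that is absorbed into the coefficient and $(b_1-b_2)''(y)$ is the second-derivative discrepancy that survives on the right-hand side, with $a_1-a_2,\,b_1-b_2$ ranging over $\gamma-\eta$ and the $\phi_i-\varphi_i$. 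Collecting all summands and grouping them by which difference they carry yields the asserted bound, with the $D_{1,i}$ being polynomials in $\delta^{-1}$, in the $H^2$-norms of $\gamma,\eta,\phi_i,\varphi_i$, and in the pointwise quantities $|\gamma''(y)|,|\eta''(y)|,|\phi_i''(y)|,|\varphi_i''(y)|$.

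The main obstacle — the single point where the argument departs from that of Proposition~\ref{prop:A_C-inf} — is the top-order factor $\gamma''$: since $\gamma\in H^{2}$ only gives $\gamma''\in L^{2}$, this factor cannot be absorbed into a $C^{0}$- or an $H^{2}$-estimate, which is exactly why the inequality is stated pointwise in $y$ and why the second-derivative discrepancies appear undiffused on the right-hand side (a fully explicit version would, strictly speaking, also carry a term $|(\gamma-\eta)''(y)|$ alongside $\|\gamma-\eta\|_{H^2}$, with coefficient a polynomial in norms only; this is harmless in the later application, where $\kappa(\gamma)(y)$ is integrated against the kernel $K(\gamma)(x,y)$ and $\|(\gamma-\eta)''\|_{L^2}\le\|\gamma-\eta\|_{H^2}$). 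Everything else is the same bookkeeping as before: a Leibniz expansion followed by a termwise application of Corollary~\ref{cor:fundamental_inequality}.
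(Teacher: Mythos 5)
The paper gives no proof of this corollary, and your argument—factoring $\kappa(\gamma)=|\gamma'|^{-2}\bigl(-(\mathcal{R}\gamma')\cdot\gamma''\bigr)$, expanding $\mathcal{D}^k\kappa$ by the Leibniz rule (with the quadratic factor contributing only variations of order $\le 2$), controlling the $|\gamma'|^{-2\mu}$ and first-derivative polynomial factors via Corollary~\ref{cor:fundamental_inequality} exactly as in Proposition~\ref{prop:A_C-inf}, and telescoping the single top-order bilinear factor pointwise in $y$—is precisely the intended route and is correct. Your side remark is also accurate: as literally stated the right-hand side omits a term of the form $D\,|(\gamma-\eta)''(y)|$, which genuinely arises already at $k=0$ from $(\mathcal{R}\eta')\cdot(\gamma-\eta)''$ and cannot be absorbed into $D_{1,0}\|\gamma-\eta\|_{H^2}$ pointwise; this is an imprecision in the corollary's statement rather than a gap in your proof, and it is harmless in the subsequent application since that term is square-integrable in $y$.
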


To handle the singular kernel $K(\gamma)$, we introduce auxiliary functions.
Let $h(\gamma)(x,y) \coloneqq |\Delta\gamma(x,y)|^{-3}$.

\begin{lemma}
    \label{lem:h}
    Suppose that $\gamma\in H^2$ satisfies $|\gamma|_\ast\ge\delta$ with a positive constant $\delta$.
    Let $\phi_1,\ldots,\phi_k,\varphi_1,\ldots,\varphi_k\in H^2$ and $\eta\in B_{\delta/(2S_1)}(\gamma)$.
    Then, for any $k\in\mathbb{Z}_{\ge0}$, there exist polynomials $D_{j,i}$ $(i=0,\ldots, k, \, j=1, 2)$ in $\|\gamma\|_{H^2}$, $\|\phi_\ell\|_{H^2}$, and $\|\varphi_\ell\|_{H^2}$ $(\ell=1, \ldots, k)$ such that the following estimates hold for all $x,y\in\mathbb{S}^1$ with $x\neq y${\rm :}
    \begin{align*}
        \left|\mathcal{D}^kh(\gamma)[\phi_1,\ldots,\phi_k]-\mathcal{D}^kh(\eta)[\varphi_1,\ldots,\varphi_k]\right|
        &\le\left(D_{1,0}\|\gamma-\eta\|_{H^2}+\sum_{i=1}^kD_{1,i}\|\phi_i-\varphi_i\|_{H^2}\right)d_{\mathbb{S}^1}(x,y)^{-3},\\
        \left|\partial_x\left(\mathcal{D}^kh(\gamma)[\phi_1,\ldots,\phi_k]-\mathcal{D}^kh(\eta)[\varphi_1,\ldots,\varphi_k]\right)\right|
        &\le\left(D_{2,0}\|\gamma-\eta\|_{H^2}+\sum_{i=1}^kD_{2,i}\|\phi_i-\varphi_i\|_{H^2}\right)d_{\mathbb{S}^1}(x,y)^{-4}.
    \end{align*}
\end{lemma}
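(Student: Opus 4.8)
The plan is to mirror the proof of Proposition~\ref{prop:A_C-inf}, adapting it to the singular kernel $h(\gamma)(x,y)=|\Delta\gamma(x,y)|^{-3}$. First I would record the explicit form of the variations. Since $\Delta\gamma(x,y)=\gamma(y)-\gamma(x)$ depends linearly on $\gamma$, one has $\mathcal{D}(\Delta\gamma)[\phi]=\Delta\phi$, and differentiating $h=(\Delta\gamma\cdot\Delta\gamma)^{-3/2}$ repeatedly gives, by the same induction as in Proposition~\ref{prop:A_C-inf},
\[
    \mathcal{D}^k h(\gamma)[\phi_1,\ldots,\phi_k]=\frac{1}{|\Delta\gamma|^{2k+3}}\sum_{\ell=0}^{\lfloor k/2\rfloor}a_{k,\ell}\,|\Delta\gamma|^{2\ell}\,Q_\ell(\Delta\gamma,\Delta\phi_1,\ldots,\Delta\phi_k),
\]
with universal constants $a_{k,\ell}$, where $Q_\ell$ is a homogeneous polynomial of degree $2(k-\ell)$ assembled (by symmetrization) from the monomial $\prod_{i=1}^{k-2\ell}(\Delta\gamma\cdot\Delta\phi_i)\prod_{j=1}^{\ell}(\Delta\phi_{k-2\ell+2j-1}\cdot\Delta\phi_{k-2\ell+2j})$. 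Each summand thus carries the homogeneity count $2\ell-(2k+3)+2(k-\ell)=-3$, matching the claimed weight $d_{\mathbb{S}^1}(x,y)^{-3}$.

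Second, I would collect the pointwise size bounds. As $\eta\in B_{\delta/(2S_1)}(\gamma)$, the computation in the proof of Lemma~\ref{lem:V_open} gives $|\eta|_\ast\ge\delta/2$, so by \eqref{eq:diff_LB}
\[
    |\Delta\gamma(x,y)|\ge\delta\,d_{\mathbb{S}^1}(x,y),\qquad |\Delta\eta(x,y)|\ge\tfrac{\delta}{2}\,d_{\mathbb{S}^1}(x,y),
\]
while Lemma~\ref{lem:delta} furnishes the upper bounds $|\Delta\gamma(x,y)|\le S_1\|\gamma\|_{H^2}d_{\mathbb{S}^1}(x,y)$ and likewise for $\Delta\eta$, $\Delta\phi_i$, $\Delta\varphi_i$. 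These bound all the factors left untouched in the telescoping below. For the first assertion, I would estimate the difference of the $k$-th variations term by term in the expansion above: for each $\ell$, write the difference of the corresponding summands as a telescoping sum, first swapping the denominator $|\Delta\gamma|^{2k+3-2\ell}\to|\Delta\eta|^{2k+3-2\ell}$, then swapping $\Delta\gamma\to\Delta\eta$ and each $\Delta\phi_i\to\Delta\varphi_i$ in $Q_\ell$ one factor at a time. Each swap is controlled by the corresponding $\Delta$-version of the inequalities in Corollary~\ref{cor:fundamental_inequality}: the denominator swap contributes $C\|\gamma-\eta\|_{H^2}\,d_{\mathbb{S}^1}(x,y)^{-(2k+3-2\ell)}$, and each numerator swap contributes (a product of $H^2$-norms times) $C\|\gamma-\eta\|_{H^2}\,d_{\mathbb{S}^1}(x,y)^{2(k-\ell)}$ or $C\|\phi_i-\varphi_i\|_{H^2}\,d_{\mathbb{S}^1}(x,y)^{2(k-\ell)}$, while the untouched factors are bounded by the size estimates above. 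Multiplying all powers of $d_{\mathbb{S}^1}(x,y)$, every resulting term carries exactly $d_{\mathbb{S}^1}(x,y)^{-3}$ with a coefficient polynomial in $\|\gamma\|_{H^2},\|\phi_\ell\|_{H^2},\|\varphi_\ell\|_{H^2}$; summing over $\ell$ yields the claimed bound with $D_{1,i}$ of this form.

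For the second assertion, I differentiate the expansion of $\mathcal{D}^k h$ in $x$, using $\partial_x\Delta\gamma(x,y)=-\gamma'(x)$ and $\partial_x\Delta\phi_i=-\phi_i'(x)$: this produces a finite linear combination of terms of the same type, now either with one extra power of $|\Delta\gamma|$ in the denominator offset by an extra numerator factor $(\Delta\gamma\cdot\gamma'(x))$ (if $\partial_x$ hits an inverse-length factor) or with $Q_\ell$ of degree lowered by one (if $\partial_x$ hits a dot product), so in both cases the homogeneity count drops from $-3$ to $-4$. The new boundary-point factors $\gamma'(x),\phi_i'(x)$ are bounded pointwise by $S_1\|\cdot\|_{H^2}$ and their differences by $S_1\|\gamma-\eta\|_{H^2}$, $S_1\|\phi_i-\varphi_i\|_{H^2}$ through $H^2\hookrightarrow C^1$, so running the same telescoping argument---now also Lipschitz-estimating these factors---gives each term the weight $d_{\mathbb{S}^1}(x,y)^{-4}$ and polynomial coefficients $D_{2,i}$.

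The genuine work throughout is the combinatorial bookkeeping: for every telescoping term one must identify which factor carries the difference estimate and verify that the powers of $d_{\mathbb{S}^1}(x,y)$ contributed by the swapped factor (via Corollary~\ref{cor:fundamental_inequality}), the untouched factors (bounded above by Lemma~\ref{lem:delta}) and the inverse-length factor (bounded below) always recombine to exactly $d_{\mathbb{S}^1}(x,y)^{-3}$---and, after one $x$-derivative, to $d_{\mathbb{S}^1}(x,y)^{-4}$. The one genuinely delicate point is this last scaling check for $\partial_x$: that $\partial_x$ landing on $|\Delta\gamma|^{-p}$ versus on a dot product raises the singularity by precisely one order in each case, so that no term of $\partial_x\mathcal{D}^k h$ is worse than $d_{\mathbb{S}^1}(x,y)^{-4}$; everything else reduces mechanically to Corollary~\ref{cor:fundamental_inequality} and Lemma~\ref{lem:delta}, exactly as in Proposition~\ref{prop:A_C-inf}.
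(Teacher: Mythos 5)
Your proposal is correct and follows essentially the same route as the paper: write the $k$-th variation of $h$ in the explicit form $|\Delta\gamma|^{-(2k+3)}\sum_\ell c_{k,\ell}|\Delta\gamma|^{2\ell}P_\ell(\Delta\gamma,\Delta\phi_1,\ldots,\Delta\phi_k)$, then telescope and apply the $\Delta$-versions of the inequalities in Corollary~\ref{cor:fundamental_inequality} together with the chord-arc lower bounds and Lemma~\ref{lem:delta}. The paper's own proof is just these two steps stated without detail; your homogeneity bookkeeping (in particular the check that $\partial_x$ raises the singularity by exactly one order whether it hits an inverse-length factor or a dot product) is the correct elaboration of what the paper leaves implicit.
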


\begin{proof}
    The $k$-th variation of $h$ is given by
    \begin{align*}
        \mathcal{D}^kh(\gamma)[\phi_1,\ldots,\phi_k]=\frac{1}{|\Delta\gamma|^{2k+3}}\sum_{\ell=0}^{\lfloor\frac{k}{2}\rfloor}c_{k,\ell}|\Delta\gamma|^{2\ell}P_\ell(\Delta\gamma,\Delta\phi_1,\ldots,\Delta\phi_k).
    \end{align*}
    Then, Corollary~\ref{cor:fundamental_inequality} implies the desired estimates.
\end{proof}

We also define $\mathcal{N}_1, \mathcal{N}_2 \colon C^1 \to C^0(\mathbb{S}^1\times\mathbb{S}^1)$ by
\begin{align*}
    \mathcal{N}_1(\gamma)(x,y) &\coloneqq \Delta\gamma(x,y) - \gamma'(x)(y - x),\qquad
    \mathcal{N}_2(\gamma)(x,y) \coloneqq \Delta\gamma(x,y) - \gamma'(y)(y - x).
\end{align*}

\begin{lemma}
    \label{lem:N}
    Let $\gamma \in H^2$. Then we have
    \begin{alignat*}{2}
        |\mathcal{N}_1(\gamma)(x,y)|
        &\le \frac{1}{2} \mathcal{M}(\gamma'')(x)d_{\mathbb{S}^1}(x,y)^2, &\qquad
        |\mathcal{N}_2(\gamma)(x,y)|
        &\le \frac{1}{2} \mathcal{M}(\gamma'')(y)d_{\mathbb{S}^1}(x,y)^2, \\
        |\partial_x \mathcal{N}_1(\gamma)(x,y)|
        &\le |\gamma''(x)|d_{\mathbb{S}^1}(x,y), &\qquad
        |\partial_x \mathcal{N}_2(\gamma)(x,y)|
        &\le \mathcal{M}(\gamma'')(x)d_{\mathbb{S}^1}(x,y),
    \end{alignat*}
    for any $x, y \in \mathbb{S}^1$.
\end{lemma}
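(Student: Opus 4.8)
The plan is to recognize $\mathcal{N}_1(\gamma)(\cdot,y)$ and $\mathcal{N}_2(\gamma)(\cdot,y)$ as first-order Taylor remainders of $\gamma$ and to bound them via the fundamental theorem of calculus along the shorter arc joining $x$ to $y$ on $\mathbb{S}^1$. Throughout, for each pair $x\neq y$ we fix the representative of $y-x$ in $\mathbb{R}$ with $|y-x|=d_{\mathbb{S}^1}(x,y)$, and all one-variable integrals $\int_x^t$, $\int_x^y$ are taken along the corresponding geodesic sub-arc; since $\gamma\in H^2(\mathbb{S}^1)$ we have $\gamma'\in H^1\hookrightarrow C^0$ and $\gamma''\in L^2\subset L^1$, so these integrals and the maximal function $\mathcal{M}(\gamma'')$ are well defined, while the $\partial_x$-derivatives below are understood in the weak sense and the pointwise bounds hold for a.e. $x$.

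For the first two estimates, write
\[
    \mathcal{N}_1(\gamma)(x,y) = \gamma(y)-\gamma(x)-\gamma'(x)(y-x) = \int_x^y\bigl(\gamma'(t)-\gamma'(x)\bigr)\,\mathrm{d}t = \int_x^y\int_x^t\gamma''(\sigma)\,\mathrm{d}\sigma\,\mathrm{d}t.
\]
The inner integral is controlled by the maximal function: the sub-arc from $x$ to $t$ has length $|t-x|$ and contains $x$ in its closure, so (after enlarging it to a slightly larger open arc and passing to the limit in Definition~\ref{def:HL}) $\bigl|\int_x^t\gamma''(\sigma)\,\mathrm{d}\sigma\bigr|\le|t-x|\,\mathcal{M}(\gamma'')(x)$. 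Integrating in $t$ over an arc of length $d_{\mathbb{S}^1}(x,y)$ gives $|\mathcal{N}_1(\gamma)(x,y)|\le\mathcal{M}(\gamma'')(x)\int_0^{d_{\mathbb{S}^1}(x,y)}s\,\mathrm{d}s=\tfrac12\mathcal{M}(\gamma'')(x)\,d_{\mathbb{S}^1}(x,y)^2$. The bound for $\mathcal{N}_2$ follows from the identity $\mathcal{N}_2(\gamma)(x,y)=-\mathcal{N}_1(\gamma)(y,x)$ together with $d_{\mathbb{S}^1}(x,y)=d_{\mathbb{S}^1}(y,x)$.

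For the last two estimates, differentiate directly in $x$ with $y$ held fixed. A short computation gives $\partial_x\mathcal{N}_1(\gamma)(x,y)=-\gamma'(x)-\gamma''(x)(y-x)+\gamma'(x)=-\gamma''(x)(y-x)$, hence $|\partial_x\mathcal{N}_1(\gamma)(x,y)|=|\gamma''(x)|\,d_{\mathbb{S}^1}(x,y)$ — the pointwise value $|\gamma''(x)|$ (rather than its maximal function) appears because no integration is needed. Similarly $\partial_x\mathcal{N}_2(\gamma)(x,y)=\gamma'(y)-\gamma'(x)=\int_x^y\gamma''(\sigma)\,\mathrm{d}\sigma$, and the same maximal-function bound as above yields $|\partial_x\mathcal{N}_2(\gamma)(x,y)|\le|y-x|\,\mathcal{M}(\gamma'')(x)=\mathcal{M}(\gamma'')(x)\,d_{\mathbb{S}^1}(x,y)$.

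There is no substantial analytic obstacle here; the only points requiring care are bookkeeping ones: fixing the geodesic/arc conventions on $\mathbb{S}^1$ so that $y-x$ and the arc-integrals are unambiguous, keeping track of which estimates use the pointwise value of $\gamma''$ versus its maximal function, and justifying the maximal-function estimate for arcs whose boundary — rather than interior — contains the base point $x$, which is handled by the routine limiting argument indicated above.
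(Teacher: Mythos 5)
Your proof is correct and follows essentially the same route as the paper's: both write $\mathcal{N}_1$ as the integral-form Taylor remainder along the geodesic arc, bound the inner integral of $\gamma''$ by $\mathcal{M}(\gamma'')(x)$, and compute $\partial_x\mathcal{N}_1=-\gamma''(x)(y-x)$ and $\partial_x\mathcal{N}_2=\gamma'(y)-\gamma'(x)$ directly. Your added care about the Fubini/double-integral step, the symmetry $\mathcal{N}_2(\gamma)(x,y)=-\mathcal{N}_1(\gamma)(y,x)$, and the limiting argument for arcs having $x$ on their boundary only fills in details the paper leaves implicit.
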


\begin{proof}
    Suppose that $x,y$ are close enough so that the geodesic segment $\Gamma_{x,y}$ is well-defined.
    We parametrize $\Gamma_{x,y}$ by arc length $s \in [0, d_{\mathbb{S}^1}(x,y)]$.
    Then, using the Taylor expansion with integral remainder:
    \begin{align*}
        \mathcal{N}_1(\gamma)(x,y)
        &= (\gamma(y) - \gamma(x)) - \gamma'(x)(y-x)
        = \int_0^{d_{\mathbb{S}^1}(x,y)} \gamma''(s') (d(x,y) - s') \, \mathrm{d}s'.
    \end{align*}
    The term involving the second derivative is estimated by the maximal function as in Definition~\ref{def:HL}:
    \begin{align*}
        \left|\int_0^{d_{\mathbb{S}^1}(x,y)} \gamma''(s') (d(x,y) - s') \, \mathrm{d}s' \right|
        \le \mathcal{M}(\gamma'')(x) \int_0^{d_{\mathbb{S}^1}(x,y)} (d(x,y) - s') \, \mathrm{d}s'
        = \frac{1}{2}\mathcal{M}(\gamma'')(x)d_{\mathbb{S}^1}(x,y)^2.
    \end{align*}
    Since $\partial_x\mathcal{N}_1(\gamma)(x,y)=-\gamma''(x)(y-x)$, we obtain the desired estimate for $\partial_x\mathcal{N}_1(\gamma)$.
    Estimates for $\mathcal{N}_2(\gamma)$ and its derivative are derived similarly.
\end{proof}

Define $g_1(\gamma)(x,y) \coloneqq \nu(\gamma)(x) \cdot \Delta\gamma(x,y)$ and $g_2(\gamma)(x,y) \coloneqq \nu(\gamma)(y) \cdot \Delta\gamma(x,y)$.
Since $\nu(\gamma)(x) \cdot \gamma'(x) = 0$, we have $g_1(\gamma) = \nu(\gamma)(x) \cdot \mathcal{N}_1(\gamma)$.
The following lemma establishes the crucial estimates for $g_1$ and $g_2$.

\begin{lemma}
    \label{lem:g1_g2}
    Suppose that $\gamma \in H^2$ satisfies $|\gamma|_\ast \ge \delta$ for some positive constant $\delta$.
    Let $\phi_1,\ldots,\phi_k, \varphi_1,\ldots,\varphi_k \in H^2$, and let $\eta \in B_{\delta/(2S_1)}(\gamma)$.
    Then, for any $k \in \mathbb{N}$, there exist polynomials $D_i$ and $D_{i,j}$ (in norms and maximal functions) such that the following estimates hold for all $x,y \in \mathbb{S}^1${\rm :}
    \begin{align*}
        &|\mathcal{D}^k g_1(\gamma)[\phi_1,\ldots,\phi_k] - \mathcal{D}^k g_1(\eta)[\varphi_1,\ldots,\varphi_k]|\\
        &\le \left(D_1 \|\gamma - \eta\|_{H^2} + D_2 \mathcal{M}((\gamma - \eta)'')(x) + \sum_{j=1}^k \left(D_{3,j} \|\phi_j - \varphi_j\|_{H^2} + D_{4,j} \mathcal{M}((\phi_j - \varphi_j)'')(x)\right) \right)d_{\mathbb{S}^1}(x,y)^2,\\
        &\abs{\partial_x\left(\mathcal{D}^k g_1(\gamma)[\phi_1,\ldots,\phi_k] - \mathcal{D}^k g_1(\eta)[\varphi_1,\ldots,\varphi_k]\right)}\\
        &\le \left(\tilde{D}_1 \|\gamma - \eta\|_{H^2} + \tilde{D}_2 |(\gamma - \eta)''(x)| + \tilde{D}_3 \mathcal{M}((\gamma - \eta)''(x)) \right.\\
        &\qquad \left. + \sum_{i=1}^k \left( \tilde{D}_{4,i} \|\phi_i - \varphi_i\|_{H^2} + \tilde{D}_{5,i} |(\phi_i - \varphi_i)''(x)| + \tilde{D}_{6,i} \mathcal{M}((\phi_i - \varphi_i)'')(x) \right) \right)d_{\mathbb{S}^1}(x,y).
    \end{align*}
    Similar estimates also hold for $g_2$, where $``(x)"$ is replaced with $``(y)"$.
\end{lemma}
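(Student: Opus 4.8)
The plan is to exploit the orthogonality identity already recorded before the statement, namely $g_1(\gamma)=\nu(\gamma)(x)\cdot\mathcal{N}_1(\gamma)(x,y)$ (and likewise $g_2(\gamma)=\nu(\gamma)(y)\cdot\mathcal{N}_2(\gamma)(x,y)$), which has two decisive consequences: the factor $\mathcal{N}_1$ carries the quadratic vanishing $O(d_{\mathbb{S}^1}(x,y)^2)$ supplied by Lemma~\ref{lem:N}, and $\gamma\mapsto\mathcal{N}_1(\gamma)(x,y)$ is \emph{linear}, so $\mathcal{D}\mathcal{N}_1(\gamma)[\phi]=\mathcal{N}_1(\phi)$ and $\mathcal{D}^j\mathcal{N}_1\equiv0$ for $j\ge2$. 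Writing $F(\gamma)(x)\coloneqq\nu(\gamma)(x)$, which by Lemma~\ref{lem:tau-nu_C-inf} is smooth on $V$ with values in $H^1\hookrightarrow C^0$, the Leibniz rule applied to $g_1=F(\gamma)(x)\cdot\mathcal{N}_1(\gamma)(x,y)$ collapses to the $k+1$ terms
\begin{align*}
\mathcal{D}^k g_1(\gamma)[\phi_1,\dots,\phi_k]
&=\bigl(\mathcal{D}^k F(\gamma)[\phi_1,\dots,\phi_k]\bigr)\cdot\mathcal{N}_1(\gamma)(x,y)\\
&\quad+\sum_{i=1}^k\bigl(\mathcal{D}^{k-1}F(\gamma)[\phi_1,\dots,\phi_{i-1},\phi_{i+1},\dots,\phi_k]\bigr)\cdot\mathcal{N}_1(\phi_i)(x,y),
\end{align*}
each summand being a product of one variation of $\nu$ evaluated at $x$ (with no $y$-dependence) and one $\mathcal{N}_1$-factor evaluated at $\gamma$ or at some $\phi_i$.

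Next I would bound $\mathcal{D}^k g_1(\gamma)[\phi]-\mathcal{D}^k g_1(\eta)[\varphi]$ by a telescoping argument: write each product difference as a sum of terms in which exactly one factor is a difference. For the $\nu$-factors I would use the explicit form of the variations of $|\gamma'|^{-1}$ and $\gamma'/|\gamma'|$ (as in the proof of Proposition~\ref{prop:A_C-inf}) together with Corollary~\ref{cor:fundamental_inequality}; on $B_{\delta/(2S_1)}(\gamma)$ this yields, via $H^1\hookrightarrow C^0$, uniform pointwise bounds $|\mathcal{D}^j F(\gamma)[\cdots](x)|\le(\text{polynomial in }H^2\text{-norms})$ and Lipschitz bounds $|\mathcal{D}^j F(\gamma)[\phi_\bullet](x)-\mathcal{D}^j F(\eta)[\varphi_\bullet](x)|\le D\|\gamma-\eta\|_{H^2}+\sum_\ell D_\ell\|\phi_\ell-\varphi_\ell\|_{H^2}$. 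For the $\mathcal{N}_1$-factors I would use $|\mathcal{N}_1(\psi)(x,y)|\le\frac{1}{2}\mathcal{M}(\psi'')(x)\,d_{\mathbb{S}^1}(x,y)^2$ from Lemma~\ref{lem:N} together with linearity, $\mathcal{N}_1(\gamma)-\mathcal{N}_1(\eta)=\mathcal{N}_1(\gamma-\eta)$ and $\mathcal{N}_1(\phi_i)-\mathcal{N}_1(\varphi_i)=\mathcal{N}_1(\phi_i-\varphi_i)$. Thus in each telescoped term, if the $\nu$-factor is the difference it contributes the $\|\gamma-\eta\|_{H^2}$ and $\|\phi_\ell-\varphi_\ell\|_{H^2}$ terms while the surviving $\mathcal{N}_1(\gamma)$ or $\mathcal{N}_1(\phi_i)$ puts $\mathcal{M}(\gamma'')(x)$ or $\mathcal{M}(\phi_i'')(x)$ into the coefficient and supplies $d_{\mathbb{S}^1}(x,y)^2$; if instead the $\mathcal{N}_1$-factor is the difference it contributes $\mathcal{M}((\gamma-\eta)'')(x)\,d_{\mathbb{S}^1}(x,y)^2$ or $\mathcal{M}((\phi_i-\varphi_i)'')(x)\,d_{\mathbb{S}^1}(x,y)^2$ with a norm-polynomial coefficient from the surviving $\nu$-variation. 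Summing the finitely many terms gives exactly the claimed estimate for $g_1$.

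For the $\partial_x$-estimate I would differentiate the same product representation in $x$ (understood a.e., since all fields lie only in $H^2$). The key points are that $\partial_x$ falls either on a $\nu$-factor, where $\partial_x\mathcal{D}^j F(\gamma)[\cdots](x)$ is bounded a.e. by a polynomial in the $H^2$-norms and the pointwise second derivatives $|\gamma''(x)|,|\phi_\ell''(x)|$ — since $\partial_x(\gamma'/|\gamma'|)$ is linear in $\gamma''$ with coefficients smooth in $\gamma'$ — with the analogous Lipschitz bound producing the $|(\gamma-\eta)''(x)|$, $|(\phi_\ell-\varphi_\ell)''(x)|$ and $\mathcal{M}((\gamma-\eta)'')(x)$, $\mathcal{M}((\phi_\ell-\varphi_\ell)'')(x)$ terms; or on an $\mathcal{N}_1$-factor, where $|\partial_x\mathcal{N}_1(\psi)(x,y)|\le|\psi''(x)|\,d_{\mathbb{S}^1}(x,y)$ by Lemma~\ref{lem:N}, with only one power of $d_{\mathbb{S}^1}(x,y)$. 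In the first subcase the untouched $\mathcal{N}_1$-factor still supplies the spare power $d_{\mathbb{S}^1}(x,y)^2\le\frac{1}{2}d_{\mathbb{S}^1}(x,y)$; in the second subcase one power of $d_{\mathbb{S}^1}(x,y)$ is lost but a pointwise $|\psi''(x)|$ is gained. Telescoping and collecting as before yields the stated $\partial_x$-estimate for $g_1$, and the estimates for $g_2$ follow verbatim with $\mathcal{N}_1$ replaced by $\mathcal{N}_2$ and the relevant evaluations moved from $x$ to $y$, using the bounds on $\mathcal{N}_2$ and $\partial_x\mathcal{N}_2$ from Lemma~\ref{lem:N}.

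The only genuinely structural input is the orthogonality rewrite $g_1=\nu(\gamma)(x)\cdot\mathcal{N}_1(\gamma)(x,y)$; once that — and the resulting truncation of the Leibniz expansion to $k+1$ terms — is in hand, the rest is bookkeeping. The main obstacle is precisely to keep that bookkeeping straight: one must check that every term of the telescoped expansion carries exactly one $\mathcal{N}$-factor (hence exactly the right power of $d_{\mathbb{S}^1}(x,y)$, and a maximal function applied only to an atomic piece such as $\gamma''$, $\phi_i''$, $(\gamma-\eta)''$ or $(\phi_i-\varphi_i)''$, never to a product), and that in the $\partial_x$-estimate the degradation from $d^2$ to $d$ is always matched by the appearance of a pointwise second derivative rather than a maximal function whenever $\partial_x$ lands on $\mathcal{N}_1$.
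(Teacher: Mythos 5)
Your proposal follows essentially the same route as the paper's proof: the orthogonality rewrite $g_1(\gamma)=\nu(\gamma)(x)\cdot\mathcal{N}_1(\gamma)$, the linearity of $\mathcal{N}_1$ truncating the Leibniz expansion to $k+1$ terms, the telescoping of product differences using $\mathcal{N}_1(\gamma)-\mathcal{N}_1(\eta)=\mathcal{N}_1(\gamma-\eta)$, and the bounds from Lemmas~\ref{lem:tau-nu_C-inf} and~\ref{lem:N} are exactly the ingredients the paper uses. Your write-up is correct and in fact spells out the bookkeeping (which factor of each telescoped term is the difference, and where the powers of $d_{\mathbb{S}^1}(x,y)$ come from) more explicitly than the paper does.
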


\begin{proof}
    The orthogonality relation $\nu(\gamma)\cdot\gamma'=0$ implies that $g_1(\gamma) = \nu(\gamma) \cdot \mathcal{N}_1(\gamma)$.
    Since $\mathcal{N}_1$ is linear with respect to $\gamma$, its variation satisfies $\mathcal{D}\mathcal{N}_1(\gamma)[\phi]=\mathcal{N}_1(\phi)$ and $\mathcal{D}^j \mathcal{N}_1 = 0$ for $j \ge 2$.
    Applying the general Leibniz rule, the $k$-th variation is given by:
    \begin{align*}
        \mathcal{D}^k g_1(\gamma)[\phi_1,\ldots,\phi_k]
        =\mathcal{D}^k \nu(\gamma)[\phi_1,\ldots,\phi_k]\cdot\mathcal{N}_1(\gamma)
        +\sum_{i=1}^k \mathcal{D}^{k-1} \nu(\gamma)[\phi_1,\ldots,\hat{\phi}_i,\ldots,\phi_k]\cdot\mathcal{N}_1(\phi_i).
    \end{align*}
    By Lemma~\ref{lem:tau-nu_C-inf}, the variations of $\nu$ are bounded in $H^1$ (and thus in $L^\infty$).
    By Lemma~\ref{lem:N}, the terms $\mathcal{N}_1(\gamma)$ and $\mathcal{N}_1(\phi_i)$ satisfy:
    \begin{align*}
        |\mathcal{N}_1(\gamma)(x,y)| \le \frac{1}{2}\mathcal{M}(\gamma'')(x)d_{\mathbb{S}^1}(x,y)^2, \quad
        |\mathcal{N}_1(\phi_i)(x,y)| \le \frac{1}{2}\mathcal{M}(\phi_i'')(x)d_{\mathbb{S}^1}(x,y)^2.
    \end{align*}
    Therefore, every term in the expansion contains the factor $d_{\mathbb{S}^1}(x,y)^2$.
    The Lipschitz continuity estimate follows from the linearity of $\mathcal{N}_1(\gamma-\eta)$ and the estimates for $\nu$.
    The estimates for the derivative $\partial_x$ follow similarly, using $|\partial_x \mathcal{N}_1(\gamma)| \le |\gamma''| d_{\mathbb{S}^1}$.
\end{proof}

\begin{remark}
    \label{rem:notation_mix}
    In the estimates of Lemma~\ref{lem:g1_g2} and the subsequent Lemma~\ref{lem:F}, two types of terms involving second derivatives appear:
    \begin{itemize}
        \item The maximal function terms (e.g., $\mathcal{M}(\gamma'')(x)$) arise from the integral estimates of difference quotients via Lemma~\ref{lem:N}.
        \item The pointwise terms (e.g., $|\gamma''(x)|$) arise directly from the chain rule applied to $\partial_x$.
    \end{itemize}
    Strictly speaking, for $\gamma \in H^2$, the second derivative $\gamma''$ is defined only almost everywhere.
    However, this mixed notation is justified because our final goal is to derive $L^2$-type estimates for $F$.
    Since the maximal operator is bounded on $L^2$ (Proposition~\ref{prop:HL}), both types of terms are controlled by the $H^2$ norm in the $L^2$ sense.
\end{remark}

\begin{lemma}
    \label{lem:F}
    The operator $F$ satisfies $F \in C^\infty(V; H^1)$.
\end{lemma}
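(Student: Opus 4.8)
The strategy is to reduce everything to the building blocks $g_1,g_2,h$ and $\kappa$ already analyzed, exploiting the structural cancellation that makes the apparently singular kernel bounded. Writing $K(\gamma) = g_1(\gamma)\,g_2(\gamma)\,h(\gamma)$ and recalling $\kappa(\gamma)(y)\,\mathrm{d}y = \kappa_{\mathrm{geom}}(y)\,\mathrm{d}s(y)$, we have
\begin{equation*}
    F(\gamma)(x) = -\frac{1}{2\pi}\int_{\mathbb{S}^1} g_1(\gamma)(x,y)\,g_2(\gamma)(x,y)\,h(\gamma)(x,y)\,\kappa(\gamma)(y)\,\mathrm{d}y .
\end{equation*}
The key homogeneity count in $d \coloneqq d_{\mathbb{S}^1}(x,y)$ is: using $g_1(\gamma)=\nu(\gamma)(x)\cdot\mathcal{N}_1(\gamma)$, $g_2(\gamma)=\nu(\gamma)(y)\cdot\mathcal{N}_2(\gamma)$ together with Lemma~\ref{lem:N}, one has $g_1(\gamma)=O(d^2)$ with coefficient $\mathcal{M}(\gamma'')(x)$, $g_2(\gamma)=O(d^2)$ with coefficient $\mathcal{M}(\gamma'')(y)$; by \eqref{eq:diff_LB}, $h(\gamma)=O(d^{-3})$ with a coefficient depending only on $|\gamma|_\ast$; and $\kappa(\gamma)(y)=O(|\gamma''(y)|)$. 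Hence the integrand is $O(d)$, and a single $x$-derivative costs exactly one power of $d$ (Lemma~\ref{lem:N} controls $\partial_x\mathcal{N}_1,\partial_x\mathcal{N}_2$, while $\partial_x h = O(d^{-4})$ with a coefficient carrying no pointwise second-derivative factor), so $\partial_x K(\gamma)=O(1)$. The crucial bookkeeping feature, inherited through all the estimates below, is that in every resulting term the second-derivative coefficients enter linearly and at most one of them is evaluated at the external variable $x$, the remaining ones at the internal variable $y$.

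First I would show that $F$ maps $V$ into $H^1$. Fix $\gamma\in V$ with $|\gamma|_\ast\ge\delta$. From the above, $|K(\gamma)(x,y)\,\kappa(\gamma)(y)| \le C\,\mathcal{M}(\gamma'')(x)\,\mathcal{M}(\gamma'')(y)\,|\gamma''(y)|$ with $C = C(\delta,\|\gamma\|_{H^2})$; integrating in $y$ by the Cauchy--Schwarz inequality and using the $L^2$-boundedness of the Hardy--Littlewood maximal operator (Proposition~\ref{prop:HL}) gives $|F(\gamma)(x)| \le C\,\mathcal{M}(\gamma'')(x)\,\|\mathcal{M}(\gamma'')\|_{L^2}\,\|\gamma''\|_{L^2}$, hence $F(\gamma)\in L^2$ with $\|F(\gamma)\|_{L^2}\le C\|\gamma\|_{H^2}^3$. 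The $x$-derivative is obtained by differentiating the kernel: by the Leibniz rule $\partial_x K = (\partial_x g_1)g_2 h + g_1(\partial_x g_2)h + g_1 g_2(\partial_x h)$ and the one-derivative bounds of Lemmas~\ref{lem:h}, \ref{lem:g1_g2}, \ref{lem:N} (with the maximal-function factor of $\partial_x g_2$ assigned to $y$, and the finer bound $|\partial_x g_1|\le C|\gamma''(x)|\,d$ obtained from $\partial_x g_1 = \partial_x\nu(\gamma)(x)\cdot\Delta\gamma(x,y)$ and Lemma~\ref{lem:delta}), the formally differentiated integrand is dominated by $C\bigl(|\gamma''(x)|+\mathcal{M}(\gamma'')(x)\bigr)\,\mathcal{M}(\gamma'')(y)\,|\gamma''(y)|$, which lies in $L^1$ in $y$ for a.e.\ fixed $x$ and defines an $L^2$ function of $x$; a routine difference-quotient/diagonal-truncation argument then identifies it with $\partial_x F(\gamma)$. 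Thus $F(\gamma)\in H^1$.

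Next I would compute and estimate the variations. By differentiating under the integral sign and the general Leibniz rule, $\mathcal{D}^k F(\gamma)[\phi_1,\ldots,\phi_k]$ is a finite sum of integrals of products $\mathcal{D}^{k_1}g_1(\gamma)[\cdot]\,\mathcal{D}^{k_2}g_2(\gamma)[\cdot]\,\mathcal{D}^{k_3}h(\gamma)[\cdot]\,\mathcal{D}^{k_4}\kappa(\gamma)[\cdot](y)$ with $k_1+k_2+k_3+k_4 = k$, the arguments being disjoint subfamilies of $\{\phi_i\}$, together with one $x$-derivative possibly falling on one factor. Lemmas~\ref{lem:h} and \ref{lem:g1_g2}, Corollary~\ref{cor:kappa} and Lemma~\ref{lem:tau-nu_C-inf} show that each factor obeys the same $d$-homogeneity and the same ``linear, at most one external evaluation'' structure as the $k=0$ case, the constants now being polynomials in $\|\gamma\|_{H^2}$ and in the $\|\phi_i\|_{H^2}$. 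Hence every term and its $x$-derivative are dominated exactly as before, the interchange of $\partial_\varepsilon$ with the integral is justified in the same way, and $\mathcal{D}^k F(\gamma)[\phi_1,\ldots,\phi_k]\in H^1$ with a bound polynomial in $\|\gamma\|_{H^2}$ and multilinear in the $\|\phi_i\|_{H^2}$. To upgrade the pointwise existence of all variations to $F\in C^\infty(V;H^1)$, I would establish joint continuity of $(\gamma;\phi_1,\ldots,\phi_k)\mapsto\mathcal{D}^k F(\gamma)[\phi_\bullet]$: subtracting two such multilinear integrals and telescoping over the four factors, the difference estimates of Lemmas~\ref{lem:h} and \ref{lem:g1_g2} and Corollary~\ref{cor:kappa} produce precisely the factors $\|\gamma-\eta\|_{H^2}$, $\|\phi_i-\varphi_i\|_{H^2}$, $\mathcal{M}((\gamma-\eta)'')$, $|(\gamma-\eta)''|$, etc., multiplied by kernels of the same homogeneity; integrating in $y$ and applying Proposition~\ref{prop:HL} yields $\|\mathcal{D}^k F(\gamma)[\phi_\bullet]-\mathcal{D}^k F(\eta)[\varphi_\bullet]\|_{H^1}\le C\bigl(\|\gamma-\eta\|_{H^2}+\sum_i\|\phi_i-\varphi_i\|_{H^2}\bigr)$ with $C$ bounded on $B_{\delta/(2S_1)}(\gamma)$ times bounded sets of directions. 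This is precisely the scheme used for $A$ in Proposition~\ref{prop:A_C-inf}, and it gives $F\in C^\infty(V;H^1)$.

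The step I expect to be the main obstacle is the uniform bookkeeping needed to keep each term — after an arbitrary number of variations and at most one $x$-derivative — of the form $P(x)\,Q(y)\,d^{a}$ with $a\ge 0$, $P\in L^2(\mathbb{S}^1)$, and $Q\in L^1$ in $y$ uniformly: the maximal-function factors coming from the non-flatness estimates of Lemma~\ref{lem:N} must be assigned to the internal variable $y$ whenever an external factor $P(x)$ is already present, and when $\partial_x$ lands on $g_1$ one must use the crude bound $|\Delta\gamma|\le C d$ (Lemma~\ref{lem:delta}) rather than the finer $O(d^2)$ cancellation for $g_1$ itself, so that only a single $x$-dependent second-derivative factor survives after $y$ is integrated out and Proposition~\ref{prop:HL} is invoked. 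This assignment, together with the integrable, uniform-in-$x$ (and uniform-in-$\varepsilon$) domination required to differentiate the singular integral near the diagonal, is the technical heart of the argument; once it is in place, the proof follows the template already developed for $A$ in Proposition~\ref{prop:A_C-inf}.
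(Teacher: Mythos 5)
Your proposal is correct and follows essentially the same route as the paper's proof: the factorization $K(\gamma)=g_1(\gamma)g_2(\gamma)h(\gamma)$, the homogeneity count $O(d^2)\cdot O(d^2)\cdot O(d^{-3})$ giving an $O(d)$ kernel whose $x$-derivative is $O(1)$, the Leibniz rule for the $k$-th variation, and the $L^2$-boundedness of the Hardy--Littlewood maximal operator (Proposition~\ref{prop:HL}) applied to the estimates of Lemmas~\ref{lem:N}, \ref{lem:g1_g2} and \ref{lem:h}. You are in fact somewhat more explicit than the paper on two points it leaves implicit --- the inclusion of the $\mathcal{D}^{k_4}\kappa$ factor in the Leibniz expansion and the bookkeeping ensuring that at most one second-derivative factor is evaluated at the external variable $x$ --- both of which are consistent with, and strengthen, the argument as written.
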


\begin{proof}
    Let $\gamma \in V$. The kernel is given by $K(\gamma) = g_1(\gamma) g_2(\gamma) h(\gamma)$.
    Applying the Leibniz rule, the $k$-th variation $\mathcal{D}^k K(\gamma)[\phi_1, \ldots, \phi_k]$ is expressed as a sum of terms of the form
    \begin{align*}
        J_k(x,y) \coloneqq \mathcal{D}^{k_1} g_1(\gamma)[\Phi_1] \cdot \mathcal{D}^{k_2} g_2(\gamma)[\Phi_2] \cdot \mathcal{D}^{k_3} h(\gamma)[\Phi_3],
    \end{align*}
    where $k_1+k_2+k_3=k$.
    By the estimates established in Lemma~\ref{lem:g1_g2}, the terms involving $g_1$ and $g_2$ satisfy
    \begin{align*}
        |\mathcal{D}^{k_i} g_j(\gamma)[\Phi_i](x,y)| \le C_{k_i} d_{\mathbb{S}^1}(x,y)^2 \qquad (j=1,2),
    \end{align*}
    where the constants $C_{k_i}$ depend on the norms of $\gamma$ and $\phi$.
    Similarly, Lemma~\ref{lem:h} implies that the singularity of $h$ is of order $d_{\mathbb{S}^1}(x,y)^{-3}$.
    Combining these estimates, the product $J_k$ is bounded by
    \begin{align*}
        |J_k(x,y)| \le C d_{\mathbb{S}^1}(x,y)^2 \cdot d_{\mathbb{S}^1}(x,y)^2 \cdot d_{\mathbb{S}^1}(x,y)^{-3} = C d_{\mathbb{S}^1}(x,y).
    \end{align*}
    Since the kernel behaves like $O(d_{\mathbb{S}^1})$ as $x \to y$, the singularity is removable, and the integral defining the $k$-th variation converges pointwise.

    To verify the $H^1$-regularity, we consider the derivative $\partial_x$.
    Differentiation of the kernel increases the singularity order by at most $1$. Specifically, Lemma~\ref{lem:h} gives $|\partial_x \mathcal{D}^{k_3} h| \lesssim d_{\mathbb{S}^1}^{-4}$.
    Consequently, the derivative of the kernel $\partial_x \mathcal{D}^k K$ is controlled by
    \begin{align*}
        |\partial_x J_k(x,y)| \lesssim d_{\mathbb{S}^1}^2 \cdot d_{\mathbb{S}^1}^2 \cdot d_{\mathbb{S}^1}^{-4} \sim O(1).
    \end{align*}
    Alternatively, terms involving $\partial_x \mathcal{D}^{k_i} g_j$ behave like $d_{\mathbb{S}^1} \cdot d_{\mathbb{S}^1}^2 \cdot d_{\mathbb{S}^1}^{-3} \sim O(1)$.
    In all cases, the kernel of the derivative is bounded (or involves at most logarithmic singularities handled by the maximal function estimates in Lemma~\ref{lem:g1_g2}).
    
    Applying the $L^2$-boundedness of the Hardy--Littlewood maximal operator (Proposition~\ref{prop:HL}) to the estimates derived in Lemmas~\ref{lem:g1_g2} and \ref{lem:h}, we conclude that
    \begin{align*}
        \|\partial_x \mathcal{D}^k F(\gamma)[\phi_1, \ldots, \phi_k]\|_{L^2} \le C \|\phi_1\|_{H^2} \cdots \|\phi_k\|_{H^2}.
    \end{align*}
    This implies that the Fréchet derivatives of any order exist and are continuous mappings into $H^1$. Thus $F \in C^\infty(V; H^1)$.
\end{proof}

\begin{proposition}
    \label{prop:f}
    The nonlinearity $f$ satisfies $f \in C^\infty(V; H^1)$.
\end{proposition}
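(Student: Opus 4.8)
The plan is to obtain Proposition~\ref{prop:f} as an immediate consequence of the two structural results already established: $F\in C^\infty(V;H^1)$ (Lemma~\ref{lem:F}) and $\nu\in C^\infty(V;H^1)$ (Lemma~\ref{lem:tau-nu_C-inf}), combined with the Banach-algebra structure of $H^1(\mathbb{S}^1)$. The point is that $f(\gamma)=F(\gamma)\,\nu(\gamma)$ is nothing but the pointwise product of the scalar field $F(\gamma)$ and the vector field $\nu(\gamma)$, so it factors as $f=m\circ(F,\nu)$, where $m\colon H^1(\mathbb{S}^1)\times H^1(\mathbb{S}^1;\mathbb{R}^2)\to H^1(\mathbb{S}^1;\mathbb{R}^2)$ denotes pointwise multiplication and $(F,\nu)\colon V\to H^1(\mathbb{S}^1)\times H^1(\mathbb{S}^1;\mathbb{R}^2)$ is the pairing map.

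First I would record that $m$ is a bounded bilinear map. Using the product rule $\partial_x(ab)=(\partial_x a)\,b+a\,\partial_x b$ together with the embedding $H^1(\mathbb{S}^1)\hookrightarrow C^0(\mathbb{S}^1)$ from \eqref{eq:Sobolev}, one gets $\|ab\|_{H^1}\le C\,\|a\|_{H^1}\|b\|_{H^1}$. Any bounded bilinear map between Banach spaces is $C^\infty$: its first variation at $(a,b)$ in direction $(\alpha,\beta)$ is $\alpha b+a\beta$, its second variation is the constant symmetric bilinear form $((\alpha_1,\beta_1),(\alpha_2,\beta_2))\mapsto \alpha_1\beta_2+\alpha_2\beta_1$, and all variations of order $\ge 3$ vanish. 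Next, the pairing $\gamma\mapsto(F(\gamma),\nu(\gamma))$ is $C^\infty$ on $V$ because each component is, by Lemmas~\ref{lem:F} and~\ref{lem:tau-nu_C-inf}, and a map into a product of Banach spaces is smooth iff each component is. Composing the $C^\infty$ map $(F,\nu)$ with the $C^\infty$ map $m$ and invoking the chain rule for Fréchet-differentiable maps on open subsets of Banach spaces gives $f=m\circ(F,\nu)\in C^\infty(V;H^1)$. Concretely, the first variation is $\mathcal{D}f(\gamma)[\phi]=\mathcal{D}F(\gamma)[\phi]\,\nu(\gamma)+F(\gamma)\,\mathcal{D}\nu(\gamma)[\phi]$, and the $k$-th variation is the Leibniz-type sum $\mathcal{D}^k f(\gamma)[\phi_1,\dots,\phi_k]=\sum_{I\subset\{1,\dots,k\}}\mathcal{D}^{|I|}F(\gamma)[\phi_I]\,\mathcal{D}^{k-|I|}\nu(\gamma)[\phi_{I^c}]$, each summand being a product of two continuous $H^1$-valued maps of $\gamma$, hence continuous in $\gamma$ with values in $H^1$.

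There is essentially no obstacle here: the genuinely delicate analysis — the singular-kernel decomposition $K=g_1 g_2 h$ and the Hardy--Littlewood maximal-function estimates controlling $\partial_x F$ in $L^2$ — was already carried out in Lemma~\ref{lem:F}. The only points demanding even mild care are verifying the $H^1$-algebra product estimate and observing that smoothness of a pairing map reduces to smoothness of its components; both are routine. (One could, alternatively, re-prove the statement by expanding $\mathcal{D}^k(F\nu)$ by hand and estimating each term directly with Corollary~\ref{cor:fundamental_inequality}, Lemma~\ref{lem:g1_g2}, Lemma~\ref{lem:h} and Proposition~\ref{prop:HL}, but this merely reproduces the content of Lemma~\ref{lem:F} together with the trivial product bound, so the composition argument is preferable.)
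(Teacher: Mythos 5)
Your proposal is correct and follows exactly the paper's argument: write $f(\gamma)=F(\gamma)\nu(\gamma)$, invoke Lemma~\ref{lem:F} and Lemma~\ref{lem:tau-nu_C-inf} for the smoothness of the two factors, and conclude via the Banach-algebra property of $H^1(\mathbb{S}^1)$ and the product/chain rule. Your write-up merely spells out the routine details (boundedness and smoothness of the bilinear multiplication map, the Leibniz formula for $\mathcal{D}^k(F\nu)$) that the paper leaves implicit.
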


\begin{proof}
    Recall that $f(\gamma) = F(\gamma)\nu(\gamma)$.
    We have established that $F \in C^\infty(V; H^1)$ (Lemma~\ref{lem:F}) and $\nu \in C^\infty(V; H^1)$ (Lemma~\ref{lem:tau-nu_C-inf}).
    Since $H^1(\mathbb{S}^1)$ is a Banach algebra, the product map $(F, \nu) \mapsto F\nu$ is smooth. 
    The result follows immediately from the product rule.
\end{proof}

\subsection{Maximal regularity}

Define the function space $\mathbb{E}(0,T)$ by
\begin{align*}
    \mathbb{E}(0,T) \coloneqq H^1(0,T;H^1) \cap L^2(0,T;H^3).
\end{align*}
This space is equipped with the natural inner product
\begin{align*}
    (\eta,\zeta)_{\mathbb{E}(0,T)} \coloneqq (\eta,\zeta)_{H^1(0,T;H^1)} + (\eta,\zeta)_{L^2(0,T;H^3)},
\end{align*}
under which $(\mathbb{E}(0,T),(\cdot,\cdot)_{\mathbb{E}(0,T)})$ becomes a Hilbert space.
It is well known that $\mathbb{E}(0,T)$ is continuously embedded into $C([0,T];H^2)$;
that is, there exists a positive constant $B$ such that
\begin{align}
    \|\eta\|_{C([0,T];H^2)} \le B\|\eta\|_{\mathbb{E}(0,T)}
    \label{eq:E-C_embed}
\end{align}
holds for all $\eta\in\mathbb{E}(0,T)$ \cite[Corollary 1.14 (p.~23)]{lunardi2009interpolation}.

The aim of this subsection is to prove the following proposition concerning maximal $L^2$-regularity.

\begin{proposition}
    \label{prop:MR}
    For any $\eta\in V$, the operator $A(\eta)$ enjoys the property of maximal $L^2$-regularity on $H^1$.
    More precisely, for any $T>0$ and $g\in L^2(0,T;H^1)$, there exists a unique solution $\gamma = \gamma(g) \in \mathbb{E}(0,T)$ to the initial value problem
    \begin{align}
        \begin{dcases*}
            \partial_t \gamma = A(\eta)\gamma + g & in $\mathbb{S}^1\times(0,T)$,\\
            \gamma(0) = 0.
        \end{dcases*}
        \label{eq:problem_MR}
    \end{align}
    Furthermore, there exists a constant $C$, independent of $g$, such that the estimate
    \begin{align}
        \|\gamma\|_{\mathbb{E}(0,T)} \le C\|g\|_{L^2(0,T;H^1)}
        \label{eq:estimate_MR}
    \end{align}
    holds for all $g\in L^2(0,T;H^1)$.
\end{proposition}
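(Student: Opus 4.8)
The plan is to exploit that, for a \emph{fixed} $\eta\in V$, the operator $A(\eta)=\tfrac{1}{\pi|\partial_x\eta|^2}\partial_x^2$ is --- after passing to an equivalent inner product --- a non-negative self-adjoint operator, and then to invoke the Hilbert-space maximal-regularity theorem of de Simon once the relevant operator domains are identified with Sobolev spaces. Set $a:=1/(\pi|\partial_x\eta|^2)$. Since $\eta\in V\subset H^2\hookrightarrow C^1$ with $|\partial_x\eta|\ge|\eta|_\ast>0$ by \eqref{eq:gamma'_LB}, the coefficient $a$ is bounded above and below by positive constants; moreover $|\partial_x\eta|^2=\partial_x\eta\cdot\partial_x\eta\in H^1(\mathbb{S}^1)$ because $H^1(\mathbb{S}^1)$ is a Banach algebra, and it is bounded away from zero, so $a,\,1/a\in H^1(\mathbb{S}^1)$. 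Equip $\mathcal{H}:=L^2(\mathbb{S}^1;\mathbb{R}^2)$ with $\langle u,v\rangle_a:=\int_{\mathbb{S}^1}a^{-1}\,u\cdot v\,\mathrm{d}x$, an inner product equivalent to the standard one, and let $L\ge0$ be the self-adjoint operator on $(\mathcal{H},\langle\cdot,\cdot\rangle_a)$ associated with the closed, non-negative, symmetric form $\mathfrak{a}[u,v]:=\int_{\mathbb{S}^1}\partial_x u\cdot\partial_x v\,\mathrm{d}x$ with form domain $H^1(\mathbb{S}^1)$. Integration by parts identifies $L=-a\,\partial_x^2$ with $D(L)=\{u\in H^1:\partial_x^2 u\in L^2\}=H^2(\mathbb{S}^1)$, so that $A(\eta)=-L$ on $D(L)$ (in particular on $H^3$).

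The first step is the semigroup picture on $\mathcal{H}$ and its lift to $H^1$. By the spectral theorem $-L$ generates a bounded analytic $C_0$-semigroup $(\mathrm{e}^{-zL})$ on $\mathcal{H}$; its form domain is $D(L^{1/2})=H^1(\mathbb{S}^1)$, and the graph norm of $(1+L)^{1/2}$ there is equivalent to $\|\cdot\|_{H^1}$ (again because $a$ is bounded above and below). Since $(1+L)^{1/2}$ is an isometric isomorphism of $D(L^{1/2})$ onto $\mathcal{H}$ and commutes with $\mathrm{e}^{-zL}$, the semigroup restricts to a bounded analytic $C_0$-semigroup on the Hilbert space $X_0:=H^1(\mathbb{S}^1)$, whose generator is the part $L_1$ of $L$ in $X_0$, with $D(L_1)=\{u\in D(L):Lu\in D(L^{1/2})\}=D(L^{3/2})$. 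The crucial identification is $D(L^{3/2})=H^3(\mathbb{S}^1)$: if $u\in D(L^{3/2})$ then $u\in H^2$ and $a\,\partial_x^2 u=-Lu\in D(L^{1/2})=H^1$, whence $\partial_x^2 u=\tfrac1a\,(a\,\partial_x^2 u)\in H^1$ since $1/a\in H^1$ and $H^1(\mathbb{S}^1)$ is an algebra, so $u\in H^3$; the converse inclusion is immediate. Consequently $A(\eta)=-L_1$ with $D(L_1)=H^3(\mathbb{S}^1)$ and graph norm equivalent to $\|\cdot\|_{H^3}$.

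It then remains to apply de Simon's theorem: a generator of a bounded analytic semigroup on a Hilbert space has maximal $L^2$-regularity. Applied to $-L_1$ on $X_0=H^1(\mathbb{S}^1)$, it yields, for every $T>0$ and $g\in L^2(0,T;H^1)$, a unique $\gamma\in H^1(0,T;H^1)\cap L^2(0,T;D(L_1))=\mathbb{E}(0,T)$ solving $\partial_t\gamma=A(\eta)\gamma+g$, $\gamma(0)=0$, together with $\|\partial_t\gamma\|_{L^2(0,T;H^1)}+\|\gamma\|_{L^2(0,T;H^3)}\le C\|g\|_{L^2(0,T;H^1)}$; the remaining contribution $\|\gamma\|_{L^2(0,T;H^1)}$ to $\|\gamma\|_{\mathbb{E}(0,T)}$ is absorbed using $\gamma(t)=\int_0^t\partial_t\gamma\,\mathrm{d}s$, which gives \eqref{eq:estimate_MR} with $C=C(T,\eta)$. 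Uniqueness is inherited from uniqueness for the abstract Cauchy problem generated by $-L_1$. This is consistent with \eqref{eq:E-C_embed}, since the temporal trace space of $\mathbb{E}(0,T)$ is $(H^1,H^3)_{1/2,2}=H^2$.

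I expect the main obstacle to be the bookkeeping forced by the low regularity of the coefficient: because $\eta$ is only $H^2$, the coefficient $a$ lies in $H^1$ and no better, so the identifications $D(L^{1/2})=H^1$ and especially $D(L^{3/2})=H^3$ rest on $a$ and $1/a$ being bounded and in $H^1$ together with the Banach-algebra property of $H^1(\mathbb{S}^1)$ --- this is exactly where the structure of the problem enters. One should also note that de Simon's theorem is being invoked on a \emph{finite} time interval, so the fact that $0$ is an eigenvalue of $L$ (with eigenspace the constants) causes no trouble. A more elementary alternative --- freezing $a$ on a finite cover of $\mathbb{S}^1$, solving the constant-coefficient model problem by Fourier series, and absorbing the top-order remainder $a'\,\partial_x^2\gamma$ via the interpolation inequality $\|\gamma\|_{H^2}\le\varepsilon\|\gamma\|_{H^3}+C_\varepsilon\|\gamma\|_{H^1}$ --- would also work but is considerably longer.
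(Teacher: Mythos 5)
Your proposal is correct, but it reaches the conclusion by a genuinely different route from the paper. The paper proves that $A(\eta)$ generates an analytic semigroup on $H^1$ with domain $H^3$ by hard analysis: a partition of unity on $\mathbb{S}^1$, freezing of the coefficient $B(\eta)$ on each patch, Fourier-series resolvent estimates for the constant-coefficient model operator, absorption of the commutator and oscillation errors via interpolation inequalities, and a homotopy/Neumann-series continuation in the family $A_s(\eta)$ to pass from $B_\ast\partial_x^2$ to $B(\eta)\partial_x^2$ --- exactly the ``more elementary alternative'' you sketch in your last sentence. You instead exploit the variational structure: reweighting the $L^2$ inner product by $a^{-1}$ turns the non-divergence-form operator $a\,\partial_x^2$ into the operator associated with the unweighted Dirichlet form, hence $-A(\eta)$ becomes non-negative and self-adjoint, and the identifications $D(L^{1/2})=H^1$, $D(L^{3/2})=H^3$ (which correctly hinge on $a,1/a\in H^1\cap L^\infty$ and the Banach-algebra property of $H^1(\mathbb{S}^1)$, the only place where the low regularity of $\eta$ matters) let you lift the semigroup to $H^1$ and invoke de Simon. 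Both proofs end with de Simon's theorem, and your handling of the lower-order term $\|\gamma\|_{L^2(0,T;H^1)}$ and of $0\in\sigma(L)$ is correct. What your argument buys is brevity and transparency for this particular operator; what the paper's argument buys is that the quantitative resolvent bounds \eqref{eq:f'_lower}--\eqref{eq:f_lower} are established uniformly over the interpolation family and over the sector $\Sigma_{0,\delta}$, and these are then reused verbatim in the Fourier-in-time derivation of the explicit constant in \eqref{eq:estimate_MR} and in the semigroup bound \eqref{eq:semigroup_estimate_2}; the localization technique is also robust under non-self-adjoint perturbations, which your weighted-inner-product trick is not.
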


To prove the above proposition, we first introduce several notations.
For $\eta\in V$, set $B(\eta)\coloneqq |\partial_x \eta|^{-2}$; that is,
\begin{align*}
    A(\eta) = B(\eta)\partial_x^2.
\end{align*}
Since $B(\eta)\in C^\alpha$ with $\alpha\in[0,1/2)$ by the Sobolev embedding, there exists a positive constant $B_\ast$, depending only on $\eta$, such that
\begin{align}
    B(\eta) \ge B_\ast
    \label{eq:B_ast}
\end{align}
holds. Next, we introduce a family $(A_s(\eta))_{s\in[0,1]}$ of differential operators by linear interpolation:
\begin{align*}
    A_s(\eta)\coloneqq B_s(\eta) \partial_x^2,\qquad\text{where}\quad
    B_s(\eta)\coloneqq(1-s)B_\ast+sB(\eta)\quad(s\in[0,1]).
\end{align*}
Note that the following estimates hold uniformly in $s \in [0,1]$:
\begin{align}
    B_s(\eta) \ge B_\ast,\quad
    \|B_s(\eta)\|_{L^\infty} \le \|B(\eta)\|_{L^\infty},\quad
    \|B_s(\eta)'\|_{L^2} \le \|B(\eta)'\|_{L^2}.
    \label{eq:estimate_B}
\end{align}

For $\mu\in\mathbb{C}$ and $\delta\in(0,\pi/2)$, set
\begin{align*}
    \Sigma_{\mu,\delta} \coloneqq \{\zeta\in\mathbb{C}\mid|\arg(\zeta-\mu)|<\pi-\delta\}.
\end{align*}

\begin{lemma}
    \label{lem:resolvent_estimate}
    Let $\eta\in V$.
    Then, there exist a constant $\lambda\ge0$ and a positive constant $C_{\delta,\eta}$, depending only on $\delta$ and $\eta$, such that
    \begin{align}
        \|(z+\lambda-A_s(\eta))\gamma\|_{H^1} \ge C_{\delta,\eta}\left(|z|\|\gamma\|_{H^1} + \|\gamma\|_{H^3}\right)
        \label{eq:resolvent_estimate}
    \end{align}
    holds for all $z\in\Sigma_{0,\delta}$, $\gamma\in H^3$, and $s\in[0,1]$.
\end{lemma}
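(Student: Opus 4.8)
The plan is to bypass the usual localization-and-coefficient-freezing argument and instead exploit the self-adjoint structure of $A_s(\eta)=B_s(\eta)\partial_x^2$, which is available because we are in one space dimension; a single elliptic bootstrap then upgrades the resulting $H^2$-bound to $H^3$. Throughout, every constant depends only on $\eta$ and $\delta$: since $\eta\in V$ we have $\partial_x\eta\in H^1(\mathbb{S}^1)$, hence $B(\eta)=|\partial_x\eta|^{-2}\in H^1(\mathbb{S}^1)$ with $B(\eta)\ge B_\ast$, and by \eqref{eq:estimate_B} the interpolants satisfy $B_\ast\le B_s(\eta)\le\|B(\eta)\|_{L^\infty}$ and $\|\partial_x B_s(\eta)\|_{L^2}\le\|\partial_x B(\eta)\|_{L^2}$ uniformly in $s\in[0,1]$, so all estimates will be uniform in $s$.

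First I would set up the self-adjoint framework. Equip $L^2(\mathbb{S}^1;\mathbb{R}^2)$ with the weighted inner product $\langle u,v\rangle_s:=\int_{\mathbb{S}^1}u\cdot v\,B_s(\eta)^{-1}\,\mathrm{d}x$, whose norm is equivalent to $\|\cdot\|_{L^2}$ uniformly in $s$. Integration by parts on the circle gives $\langle A_s(\eta)u,v\rangle_s=-\int_{\mathbb{S}^1}\partial_x u\cdot\partial_x v\,\mathrm{d}x$, so $A_s(\eta)$ is symmetric and non-positive for $\langle\cdot,\cdot\rangle_s$; since $I-A_s(\eta)$ is boundedly invertible from $L^2$ onto $H^2$ (a one-dimensional elliptic ODE on $\mathbb{S}^1$), $A_s(\eta)$ is self-adjoint with domain $H^2$ and $\sigma(A_s(\eta))\subset(-\infty,0]$. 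Fix $\lambda:=1$. For $z\in\Sigma_{0,\delta}$ one has $z+\lambda\in\Sigma_{0,\delta}$ (the sector is invariant under adding nonnegative reals) and $\dist(z+\lambda,(-\infty,0])\ge|z+\lambda|\sin\delta$; the spectral theorem then yields $\|(z+\lambda-A_s(\eta))^{-1}\|_{\mathcal{B}(L^2)}\le C(\eta)(|z+\lambda|\sin\delta)^{-1}$ and $\|A_s(\eta)(z+\lambda-A_s(\eta))^{-1}\|_{\mathcal{B}(L^2)}\le C(\eta)(1+(\sin\delta)^{-1})$. Writing $f:=(z+\lambda-A_s(\eta))\gamma\in H^1$ for $\gamma\in H^3$, these give $|z+\lambda|\,\|\gamma\|_{L^2}\lesssim_{\eta,\delta}\|f\|_{L^2}$ and, using $B_s(\eta)\ge B_\ast$ and $B_s(\eta)\gamma''=A_s(\eta)\gamma$, also $\|\gamma''\|_{L^2}\lesssim_{\eta,\delta}\|f\|_{L^2}$.

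Next I would differentiate the equation. With $\beta:=\partial_x\gamma\in H^2$, differentiating $f=(z+\lambda)\gamma-B_s(\eta)\gamma''$ yields $f'=(z+\lambda)\beta-\partial_x\bigl(B_s(\eta)\partial_x\beta\bigr)=(z+\lambda-L_s)\beta$, where $L_s:=\partial_x\bigl(B_s(\eta)\partial_x\,\cdot\,\bigr)$ is self-adjoint and non-positive on $L^2$ with domain $H^2$, so again $\sigma(L_s)\subset(-\infty,0]$. The same sectorial estimate gives $|z+\lambda|\,\|\partial_x\gamma\|_{L^2}\lesssim_\delta\|f'\|_{L^2}$ and $\|\partial_x(B_s(\eta)\gamma'')\|_{L^2}=\|L_s\beta\|_{L^2}\lesssim_\delta\|f'\|_{L^2}$. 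Since $\partial_x(B_s(\eta)\gamma'')=(\partial_x B_s(\eta))\gamma''+B_s(\eta)\gamma'''$ and $B_s(\eta)\ge B_\ast$,
\[
    B_\ast\|\gamma'''\|_{L^2}\le\|\partial_x(B_s(\eta)\gamma'')\|_{L^2}+\|\partial_x B_s(\eta)\|_{L^2}\,\|\gamma''\|_{L^\infty},
\]
and the Gagliardo--Nirenberg inequality $\|\gamma''\|_{L^\infty}\lesssim\|\gamma''\|_{L^2}^{1/2}\|\gamma'''\|_{L^2}^{1/2}+\|\gamma''\|_{L^2}$ together with Young's inequality lets me absorb a small multiple of $\|\gamma'''\|_{L^2}$ into the left-hand side, producing $\|\gamma'''\|_{L^2}\lesssim_{\eta,\delta}\|f'\|_{L^2}+\|\gamma''\|_{L^2}\lesssim_{\eta,\delta}\|f\|_{H^1}$. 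I expect this step to be the only delicate point: because $\partial_x B_s(\eta)$ lies merely in $L^2$, the term $(\partial_x B_s(\eta))\gamma''$ is not lower-order in a naive sense and must be tamed by interpolation, and the self-adjoint bounds plus the uniform lower bound $B_\ast$ are precisely what make the absorption work uniformly in $s$ and $z$.

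Finally, collecting the pieces: $\|\gamma\|_{H^3}\lesssim\|\gamma\|_{L^2}+\|\gamma'''\|_{L^2}\lesssim_{\eta,\delta}\|f\|_{H^1}$ (using $\|\gamma\|_{L^2}\le\lambda^{-1}|z+\lambda|\,\|\gamma\|_{L^2}\lesssim_{\eta,\delta}\|f\|_{L^2}$), and since $|z|\le|z+\lambda|+\lambda$ we also have $|z|\,\|\gamma\|_{H^1}\le|z+\lambda|\,\|\gamma\|_{H^1}+\lambda\|\gamma\|_{H^1}\lesssim_{\eta,\delta}\|f\|_{H^1}$. Adding the two bounds gives \eqref{eq:resolvent_estimate} with $\lambda=1$ and $C_{\delta,\eta}$ the reciprocal of the accumulated constant. (Alternatively the estimate can be obtained by a partition of unity, freezing $B_s(\eta)$ at a point in each small arc, reducing to the constant-coefficient Fourier multiplier on $\mathbb{S}^1$ and absorbing the oscillation and commutator errors; the self-adjoint route above is shorter in this one-dimensional setting.)
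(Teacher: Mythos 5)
Your argument is correct in substance, but it takes a genuinely different route from the paper. The paper proves \eqref{eq:resolvent_estimate} by a partition of unity on $\mathbb{S}^1$, freezing the coefficient $B_s(\eta)$ at the center of each small arc, invoking the constant-coefficient Fourier-series bound $|z+\lambda+B_k(2\pi n)^2|\ge C_\delta(|z|+\lambda+B_k(2\pi n)^2)$, and then absorbing the commutator and oscillation errors using the uniform continuity of $B(\eta)$ and interpolation inequalities; this is carried out twice, once for $f$ and once for $f'$. You instead observe that in one dimension $A_s(\eta)=B_s(\eta)\partial_x^2$ is self-adjoint and non-positive for the weighted inner product $\int u\cdot v\,B_s(\eta)^{-1}\mathrm{d}x$, while the differentiated operator $L_s=\partial_x(B_s(\eta)\partial_x\cdot)$ is self-adjoint and non-positive for the standard one, so the spectral theorem delivers the resolvent bounds $|z+\lambda|\,\|\gamma\|_{L^2}\lesssim\|f\|_{L^2}$, $\|B_s(\eta)\gamma''\|_{L^2}\lesssim\|f\|_{L^2}$, $|z+\lambda|\,\|\gamma'\|_{L^2}\lesssim\|f'\|_{L^2}$, $\|L_s\gamma'\|_{L^2}\lesssim\|f'\|_{L^2}$ at once, uniformly in $s$; the only work left is peeling $\|\gamma'''\|_{L^2}$ off $\|L_s\gamma'\|_{L^2}$, for which your Gagliardo--Nirenberg absorption of $(\partial_xB_s(\eta))\gamma''$ is exactly right, since $\partial_xB_s(\eta)$ is only in $L^2$. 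Your route is shorter and avoids all the bookkeeping with $\phi_k$, $\omega_{B,N}$, and the choice of $N$ and $\varepsilon$; the paper's localization argument is more robust (it does not rely on the divergence-form/self-adjoint structure and would survive in higher dimensions or for non-symmetric perturbations), and it produces the intermediate inequalities \eqref{eq:f_lower}--\eqref{eq:f'_lower} in exactly the form reused later in the proof of Proposition~\ref{prop:MR} --- your estimates yield equivalent bounds, so nothing downstream breaks.

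Two small points to repair. First, the inequality $\|\gamma\|_{L^2}\le\lambda^{-1}|z+\lambda|\,\|\gamma\|_{L^2}$ is false for those $z\in\Sigma_{0,\delta}$ near the negative real axis with $|z+\lambda|<\lambda$; what is true (and all you need) is $|z+\lambda|\ge\lambda\sin\delta$ for $z\in\Sigma_{0,\delta}$, which costs only a $\delta$-dependent constant. The same fix applies where you bound $\lambda\|\gamma\|_{H^1}$. Second, you should say a word about why $D(L_s)=H^2$ and why $I-L_s$ is surjective when $B_s(\eta)$ is merely $H^1$ (hence $C^{0,\alpha}$ but not $C^1$): Lax--Milgram for the form $\int B_s(\eta)u'\cdot v'$ plus the identity $u'=B_s(\eta)^{-1}\bigl(B_s(\eta)u'\bigr)$ and the algebra property of $H^1(\mathbb{S}^1)$ settle this, but it is the hypothesis on which the entire self-adjoint machinery rests and deserves an explicit sentence.
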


\begin{proof}
We divide the proof into three steps.

\textit{Step 1}.
We first introduce a partition of unity on $\mathbb{S}^1$.
Define a family $\{\phi_k\}_{k=1}^N$ of functions on $\mathbb{S}^1$ by
\begin{align*}
    \phi_k(\cdot)\coloneqq\phi(\cdot-x_k),\qquad
    x_k=\frac{k-1}{N},
\end{align*}
where $\phi\in C^\infty(\mathbb{S}^1)$ satisfies
\begin{align}
    \phi\ge0\quad\text{on }\mathbb{S}^1,\qquad
    \supp\phi\subset\left[-\frac{1}{N},\frac{1}{N}\right],\qquad
    \sum_{k=1}^N\phi_k^2\equiv1\quad\text{on }\mathbb{S}^1.
    \label{eq:partition-of-unity}
\end{align}
It is straightforward to verify the identity
\begin{align}
    \sum_{k=1}^N\|\phi_k u\|_{L^2}^2 = \|u\|_{L^2}^2
    \label{eq:sum_phi_k-u_L2}
\end{align}
for all $u\in L^2$.
Furthermore, there exists a positive constant $C_{\phi,N}$, depending only on $\phi$ and $N$, such that the following estimates hold:
\begin{align}
    \sum_{k=1}^N\|\phi_k'\|_{L^\infty}^2\le C_{\phi,N},\qquad
    \sum_{k=1}^N\|\phi_k''\|_{L^\infty}^2 \le C_{\phi,N}.
    \label{eq:phi'_phi''_estimate}
\end{align}

\textit{Step 2}.
In this step, we show that for any $\varepsilon>0$, there exist positive constants $C_{\varepsilon,2}$ and $C_{\varepsilon,\infty}$, depending only on $\varepsilon$, such that the estimates
\begin{align}
    \|v'\|_{L^2}^2 \le C_{\varepsilon,2}\|v\|_{L^2}^2 + \varepsilon\|v''\|_{L^2}^2,\qquad
    \|v'\|_{L^\infty}^2 \le C_{\varepsilon,\infty}\|v\|_{L^2}^2 + \varepsilon\|v''\|_{L^2}^2
    \label{eq:v'_L2_Linf}
\end{align}
hold for all $v\in H^2$.
In particular, we can choose $C_{\varepsilon,2}=1/(4\varepsilon)$ and $C_{\varepsilon,\infty}=27/(16\varepsilon^3)$.

Integrating by parts and applying the Cauchy--Schwarz and Young inequalities, we obtain
\begin{align}
    \|v'\|_{L^2}^2
    &=-\int_{\mathbb{S}^1}v\cdot v''\,\mathrm{d}x
    \le\|v\|_{L^2}\|v''\|_{L^2}
    \le\frac{1}{4\varepsilon}\|v\|_{L^2}^2 + \varepsilon\|v''\|_{L^2}^2.
    \label{eq:first_ineq}
\end{align}
Thus, setting $C_{\varepsilon,2}\coloneqq1/(4\varepsilon)$, we obtain the first desired inequality.

Next, consider the second inequality.
Since $\int_{\mathbb{S}^1}v'=0$, the one-dimensional Gagliardo--Nirenberg interpolation inequality, together with \eqref{eq:first_ineq} and Young's inequality, yields
\begin{align*}
    \|v'\|_{L^\infty}^2
    \le2\|v'\|_{L^2}\|v''\|_{L^2}
    \le2\left(\|v\|_{L^2}\|v''\|_{L^2}\right)^{1/2}\|v''\|_{L^2}
    =2\|v\|_{L^2}^{1/2}\|v''\|_{L^2}^{3/2}
    \le\frac{27}{16\varepsilon^3}\|v\|_{L^2}^2 + \varepsilon\|v''\|_{L^2}^2,
\end{align*}
which is precisely the second inequality with $C_{\varepsilon,\infty}\coloneqq27/(16\varepsilon^3)$. This completes Step~2.

Note that there exists a positive constant $C_\delta$, depending only on $\delta$, such that
\begin{align}
    |z+a|\ge C_\delta(|z|+a)
    \label{eq:abs_below}
\end{align}
holds for all $z\in\Sigma_{0,\delta}$ and $a\ge0$.
In particular, we can choose $C_\delta=\sqrt{(1-\cos\delta)/2}$.

For $z\in\Sigma_{0,\delta}$, $\lambda\ge0$, $s\in[0,1]$, and $\gamma\in H^3$, set
\begin{align*}
    f\coloneqq(z+\lambda)\gamma-A_s(\eta)\gamma=(z+\lambda)\gamma - B_s(\eta)\gamma''.
\end{align*}
Differentiating this equation, we obtain
\begin{align}
    f'=(z+\lambda)v - B_s(\eta)'v' - B_s(\eta)v'',
    \label{eq:f'}
\end{align}
where $v\coloneqq\gamma'$.

\textit{Step 3}.
In this step, we show that there exists a constant $C_{\delta,\eta}$, depending only on $\delta$ and $\eta$, such that
\begin{align*}
    \|f\|_{H^1}^2 \ge C_{\delta,\eta}^2\left(|z|\|\gamma\|_{H^1} + \|\gamma\|_{H^3}\right)^2
\end{align*}
for sufficiently large $\lambda$.
Let $B_k\coloneqq B(\eta)|_{x=x_k}$.
Multiplying both sides of \eqref{eq:f'} by $\phi_k$ gives
\begin{align*}
    (z+\lambda)\phi_kv - B_k(\phi_kv)''
    =\phi_kf'+B_s(\eta)'\phi_kv'+\phi_k(B_s(\eta)-B_k)v''-2B_k\phi_k'v'-B_k\phi_k''v
    \eqqcolon b_k.
\end{align*}
Applying \eqref{eq:abs_below} and the Cauchy--Schwarz inequality, we obtain
\begin{align*}
    \|b_k\|_{L^2}^2
    &= \|(z+\lambda)\phi_kv - B_s(\eta)(\phi_kv)''\|_{L^2}^2
    =\sum_{n\in\mathbb{Z}}|z+\lambda+B_k(2\pi n)^2|^2|(\phi_kv)_n|^2\\
    &\ge\sum_{n\in\mathbb{Z}}C_\delta^2(|z|+\lambda+B_k(2\pi n)^2)^2|(\phi_kv)_n|^2
    \ge C_\delta^2\left[(|z|+\lambda)^2\|\phi_kv\|_{L^2}^2 + B_\ast^2\|(\phi_kv)''\|_{L^2}^2\right].
\end{align*}
Using the identity $(\phi_kv)'' = \phi_kv''+2\phi_k'v'+\phi_k''v$, Young's inequality implies
\begin{align*}
    \|(\phi_kv)''\|_{L^2}^2
    \ge\frac{1}{2}\|\phi_kv''\|_{L^2}^2 - 13\|\phi_k'v'\|_{L^2}^2 - 7\|\phi_k''v\|_{L^2}^2.
\end{align*}
The estimates \eqref{eq:phi'_phi''_estimate} and \eqref{eq:v'_L2_Linf} imply that
\begin{align*}
    \sum_{k=1}^N\|\phi_k'v'\|_{L^2}^2
    \le C_{\phi,N}\left(C_{\varepsilon,2}\|v\|_{L^2}^2 + \varepsilon\|v''\|_{L^2}^2\right),\qquad
    \sum_{k=1}^N\|\phi_k''v\|_{L^2}^2 \le C_{\phi,N}\|v\|_{L^2}^2.
\end{align*}
Combining these with \eqref{eq:sum_phi_k-u_L2}, we have
\begin{align}
    \sum_{k=1}^N\|b_k\|_{L^2}^2
    &\ge
    C_\delta^2\left[(|z|+\lambda)^2 - 13B_\ast^2C_{\phi,N}C_{\varepsilon,2} - 7B_\ast^2C_{\phi,N}\right]\|v\|_{L^2}^2 + C_\delta^2B_\ast^2\left(\frac{1}{2}-13C_{\phi,N}\varepsilon\right)\|v''\|_{L^2}^2.
    \label{eq:sum_bk_lower}
\end{align}
Next, we consider an upper bound for $\sum_{k=1}^N\|b_k\|_{L^2}^2$.
It follows from \eqref{eq:sum_phi_k-u_L2} and \eqref{eq:v'_L2_Linf} that
\begin{align*}
    \sum_{k=1}^N\|\phi_k(f'+B_s(\eta)'v')\|_{L^2}^2
    &=\|f'+B_s(\eta)'v'\|_{L^2}^2
    \le2\|f'\|_{L^2}^2 + 2\|B_s(\eta)'\|_{L^2}^2\|v'\|_{L^\infty}^2\\
    &\le2\|f'\|_{L^2}^2 + 2\|B(\eta)'\|_{L^2}^2\left(C_{\varepsilon,\infty}\|v\|_{L^2}^2 + \varepsilon\|v''\|_{L^2}^2\right).
\end{align*}
Since $\phi_k$ is supported in $[x_k-N^{-1},x_k+N^{-1}]$, it follows from \eqref{eq:sum_phi_k-u_L2} that
\begin{align*}
    \sum_{k=1}^N\|\phi_k(B_s(\eta)-B_k)v''\|_{L^2}^2
    &\le\sum_{k=1}^N\sup_{|x-x_k|\le N^{-1}}|B_s(\eta)-B_k|^2\|\phi_kv''\|_{L^2}^2
    \le\omega_{B,N}\|v''\|_{L^2}^2,
\end{align*}
where we set $\omega_{B,N}\coloneqq\sup_{|x-y|\le N^{-1}}|B(\eta(x))-B(\eta(y))|^2$.
Therefore, using Young's inequality and \eqref{eq:phi'_phi''_estimate}, we have
\begin{align*}
    \sum_{k=1}^N\|b_k\|_{L^2}^2
    &\le\sum_{k=1}^N\left[4\|\phi_k(f'+B_s(\eta)'v')\|_{L^2}^2 + 4\|\phi_k(B_s(\eta)-B_k)v''\|_{L^2}^2 + 16\|B_k\phi_k'v'\|_{L^2}^2 + 4\|B_k\phi_k''v\|_{L^2}^2\right]\\
    &\le8\|f'\|_{L^2}^2 + \left[8C_{\varepsilon,\infty}\|B(\eta)'\|_{L^2}^2 + 4C_{\phi,N}\|B(\eta)\|_{L^\infty}^2\left(4C_{\varepsilon,2} + 1\right)\right]\|v\|_{L^2}^2\\
    &\quad+\left[4\omega_{B,N} + \left(8\|B(\eta)'\|_{L^2} + 16C_{\phi,N}\|B(\eta)\|_{L^\infty}^2\right)\varepsilon\right]\|v''\|_{L^2}^2.
\end{align*}
Hence, combining with \eqref{eq:sum_bk_lower}, we obtain
\begin{align*}
    8\|f'\|_{L^2}^2
    &\ge(|z|+\lambda)^2\left(C_\delta^2 - \frac{B_\ast^2C_{\phi,N}C_\delta^2(13C_{\varepsilon,2}+7) + 8C_{\varepsilon,\infty}\|B(\eta)'\|_{L^2}^2 + 4C_{\phi,N}\|B(\eta)\|_{L^\infty}^2(4C_{\varepsilon,2}+1)}{\lambda^2}\right)\|v\|_{L^2}^2\\
    &\quad+\left(\frac{1}{2}C_\delta^2B_\ast^2 - 4\omega_{B,N} - \left(13B_\ast^2C_{\phi,N}C_\delta^2 + 8\|B(\eta)'\|_{L^2}^2 + 16C_{\phi,N}\|B(\eta)\|_{L^\infty}^2\right)\varepsilon\right)\|v''\|_{L^2}^2.
\end{align*}
Since $B(\eta)\in H^1\hookrightarrow C^\alpha$ with $\alpha\in[0,1/2)$ by Sobolev embedding, $B(\eta)$ is, in particular, uniformly continuous.
Therefore, taking $N$ sufficiently large and fixing it, we can ensure that $4\omega_{B,N} \le C_\delta^2B_\ast^2 / 8$.
Then, by choosing $\varepsilon$ sufficiently small and fixing it, we can ensure that the coefficient of $\|v''\|_{L^2}^2$ is bounded from below by $\frac{1}{4}C_\delta^2B_\ast^2$.
Accordingly, by choosing $\lambda\ge1$ sufficiently large, we see that the coefficient of $\|v\|_{L^2}^2$ is bounded from below by $C_\delta^2/2$.
Thus, there exists a constant $C_{\delta,\eta}$, depending only on $\delta$ and $\eta$, such that
\begin{align}
    \|f'\|_{L^2}^2 \ge 2C_{\delta,\eta}^2\left((|z|+\lambda)^2\|v\|_{L^2}^2 + \|v''\|_{L^2}^2\right)
    \label{eq:f'_lower}
\end{align}
holds.
In a similar but simpler manner (working directly with $f$ instead of $f'$), we can also show that
\begin{align}
    \|f\|_{L^2}^2 \ge 2C_{\delta,\eta}^2\left((|z|+\lambda)^2\|\gamma\|_{L^2}^2 + \|\gamma''\|_{L^2}^2\right)
    \label{eq:f_lower}
\end{align}
holds by choosing $C_{\delta,\eta}$ smaller if necessary.
Noting that $v=\gamma'$ and thus $\|v''\|_{L^2} = \|\gamma'''\|_{L^2}$, we combine \eqref{eq:f'_lower} with \eqref{eq:f_lower} to obtain
\begin{align*}
    \|f\|_{H^1}^2
    &\ge2C_{\delta,\eta}^2\left[(|z|+\lambda)^2(\|\gamma\|_{L^2}^2 + \|\gamma'\|_{L^2}^2) + \|\gamma''\|_{L^2}^2 + \|\gamma'''\|_{L^2}^2\right]\\
    &\ge2C_{\delta,\eta}^2\left(|z|^2\|\gamma\|_{H^1}^2 + \|\gamma\|_{H^3}^2\right)
    \ge C_{\delta,\eta}^2\left(|z|\|\gamma\|_{H^1} + \|\gamma\|_{H^3}\right)^2,
\end{align*}
where we have used $\lambda\ge1$ in the second inequality.
\end{proof}

\begin{lemma}
    \label{lem:resolvent_set}
    Let $\eta\in V$.
    Then, the set $\Sigma_{\lambda,\delta}=\Sigma_{0,\delta}+\lambda$ is contained in the resolvent set of $A(\eta)$ viewed as an operator from $H^3\subset H^1$ to $H^1$, where $\lambda$ is introduced in Lemma~\ref{lem:resolvent_estimate}.
\end{lemma}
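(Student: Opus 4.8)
The plan is to derive the lemma from the uniform resolvent estimate of Lemma~\ref{lem:resolvent_estimate} combined with the classical method of continuity, using the interpolating family $(A_s(\eta))_{s\in[0,1]}$ whose endpoint $A_0(\eta)=B_\ast\partial_x^2$ has constant coefficients. Fix $z\in\Sigma_{0,\delta}$; it suffices to show that $z+\lambda-A(\eta)$ is a bijection from $H^3$ onto $H^1$ with bounded inverse, since then $z+\lambda\in\rho(A(\eta))$ and $z$ was arbitrary. Specializing Lemma~\ref{lem:resolvent_estimate} to $s=1$ (so that $A_1(\eta)=A(\eta)$) gives
\[
    \|(z+\lambda-A(\eta))\gamma\|_{H^1}\ge C_{\delta,\eta}\bigl(|z|\,\|\gamma\|_{H^1}+\|\gamma\|_{H^3}\bigr)\qquad\text{for all }\gamma\in H^3 .
\]
This at once yields injectivity of $z+\lambda-A(\eta)$ and, by the usual Cauchy-sequence argument (if $(z+\lambda-A(\eta))\gamma_n\to f$ in $H^1$, then $(\gamma_n)$ is Cauchy in $H^3$, hence converges to some $\gamma$ with $(z+\lambda-A(\eta))\gamma=f$), closedness of its range in $H^1$; it also bounds the inverse, $\|(z+\lambda-A(\eta))^{-1}\|_{\mathcal B(H^1)}\le (C_{\delta,\eta}|z|)^{-1}$ and $\|(z+\lambda-A(\eta))^{-1}\|_{\mathcal B(H^1,H^3)}\le C_{\delta,\eta}^{-1}$, once surjectivity is in hand. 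Thus everything reduces to proving that $z+\lambda-A(\eta)$ is surjective onto $H^1$.

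For surjectivity I would apply the method of continuity to $L_s\coloneqq z+\lambda-A_s(\eta)\in\mathcal B(H^3,H^1)$, $s\in[0,1]$. Each $L_s$ is bounded from $H^3$ to $H^1$ because $\partial_x^2\colon H^3\to H^1$ is bounded and multiplication by $B_s(\eta)=(1-s)B_\ast+sB(\eta)\in H^1$ is bounded on $H^1$ (recall $H^1(\mathbb S^1)$ is a Banach algebra); moreover $s\mapsto L_s$ is affine, hence Lipschitz continuous in the $\mathcal B(H^3,H^1)$-norm. The decisive point is that Lemma~\ref{lem:resolvent_estimate} provides the lower bound $\|L_s\gamma\|_{H^1}\ge C_{\delta,\eta}\|\gamma\|_{H^3}$ with the constant $C_{\delta,\eta}$ \emph{independent of} $s\in[0,1]$ (and of $z\in\Sigma_{0,\delta}$). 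By the method of continuity --- a norm-continuous family of bounded operators between fixed Banach spaces that is uniformly bounded below is surjective for one value of the parameter if and only if for every value --- the operator $L_1=z+\lambda-A(\eta)$ is surjective onto $H^1$ provided $L_0=z+\lambda-B_\ast\partial_x^2$ is.

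Finally, surjectivity of the constant-coefficient operator $L_0$ is verified explicitly by Fourier series on $\mathbb S^1=\mathbb R/\mathbb Z$. Given $f=\sum_{n\in\mathbb Z}f_n\mathrm e^{2\pi\mathrm i nx}\in H^1$, put $\gamma\coloneqq\sum_{n\in\mathbb Z}\gamma_n\mathrm e^{2\pi\mathrm i nx}$ with $\gamma_n\coloneqq f_n/(z+\lambda+B_\ast(2\pi n)^2)$; by \eqref{eq:abs_below}, together with $\lambda\ge1$ and $B_\ast>0$, one has $|z+\lambda+B_\ast(2\pi n)^2|\ge C_\delta(|z|+\lambda+B_\ast(2\pi n)^2)\ge c\,(1+n^2)$ for some $c>0$, whence $\sum_n(1+n^2)^3|\gamma_n|^2\le c^{-2}\sum_n(1+n^2)|f_n|^2<\infty$, i.e. $\gamma\in H^3$ and $L_0\gamma=f$. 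Combining the three steps, $z+\lambda-A(\eta)$ is a bounded bijection $H^3\to H^1$ with bounded inverse, so $z+\lambda\in\rho(A(\eta))$, and therefore $\Sigma_{\lambda,\delta}=\Sigma_{0,\delta}+\lambda\subset\rho(A(\eta))$.

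There is no serious analytic obstacle here: the hard work was already done in Lemma~\ref{lem:resolvent_estimate}, and the present argument is purely functional-analytic. The one point worth emphasizing is that the method of continuity applies precisely because that resolvent estimate was proved uniformly in the homotopy parameter $s$; had the constant depended on $s$, one would instead have to construct solutions for the variable-coefficient operator directly (for instance by a Lax--Milgram or Galerkin argument on $\mathbb S^1$).
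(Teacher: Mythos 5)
Your proposal is correct and follows essentially the same route as the paper: both continue invertibility from the constant-coefficient endpoint $A_0(\eta)=B_\ast\partial_x^2$ to $A_1(\eta)=A(\eta)$ along the interpolating family $A_s(\eta)$, using the fact that the constant $C_{\delta,\eta}$ in Lemma~\ref{lem:resolvent_estimate} is uniform in $s$. The only cosmetic difference is that you invoke the abstract method of continuity and spell out the Fourier-series verification of the base case, whereas the paper states the base case as a direct calculation and carries out the continuation step explicitly via a Neumann series with step size controlled by $C_{\delta,\eta}/(4B_\ast)$.
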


\begin{proof}
    Let $\zeta\in\Sigma_{\lambda,\delta}$.
    A direct calculation shows that $\zeta-A_0(\eta) = \zeta - B_\ast \partial_x^2$ is invertible.
    Suppose that $\zeta-A_s(\eta)$ is invertible for some $s\in[0,1]$.
    Then, Lemma~\ref{lem:resolvent_estimate} implies that
    \begin{align}
        \left\|(\zeta-A_s(\eta))^{-1}u\right\|_{H^3} \le C_{\delta,\eta}^{-1}\|u\|_{H^1}
        \label{eq:bounded_operator_norm}
    \end{align}
    holds for all $u\in H^1$, which shows that the norm of $(\zeta-A_s(\eta))^{-1}$ as an operator from $H^1$ to $H^3$ is bounded by $C_{\delta,\eta}^{-1}$, independently of $s$.
    Let $s'\in[0,1]$ satisfy $|s-s'|\le C_{\delta,\eta}/(4B_\ast)$.
    Since $\zeta-A_s(\eta)$ is invertible, we have
    \begin{align*}
        \zeta-A_{s'}(\eta)
        =(\zeta-A_s(\eta))\left[
            \operatorname{id} + (\zeta-A_s(\eta))^{-1}(A_s(\eta)-A_{s'}(\eta))
        \right].
    \end{align*}
    A direct calculation shows that $A_s(\eta)-A_{s'}(\eta) = -(s-s')(B_\ast-B(\eta))\partial_x^2$, so it follows from \eqref{eq:bounded_operator_norm} that
    \begin{align*}
        \|(\zeta-A_s(\eta))^{-1}(A_s(\eta)-A_{s'}(\eta))\|_{\mathcal{B}(H^3)}
        &\le\|(\zeta-A_s(\eta))^{-1}\|_{\mathcal{B}(H^1,H^3)}\|A_s(\eta)-A_{s'}(\eta)\|_{\mathcal{B}(H^3,H^1)}\\
        &\le C_{\delta,\eta}^{-1} \cdot 2B_\ast\|\partial_x^2\|_{\mathcal{B}(H^3,H^1)}|s-s'|
        \le\frac{1}{2}.
    \end{align*}
    Therefore, by a Neumann series argument, $\zeta-A_{s'}(\eta)$ is invertible.
    Since $\zeta-A_0(\eta)$ is invertible, we conclude, by applying the Neumann series argument a finite number of times, that $\zeta-A_1(\eta)$ is also invertible, with domain $H^3$.
\end{proof}

\begin{corollary}
    \label{cor:analytic_semigroup}
    For any $\eta\in V$, the operator $A(\eta)$ generates an analytic semigroup on $H^1$ with domain $H^3$.
\end{corollary}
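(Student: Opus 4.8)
The plan is to invoke the classical generation theorem for analytic semigroups via sectorial resolvent estimates: an operator $\mathcal{A}$ on a Banach space $X$ with dense domain generates an analytic semigroup on $X$ provided there exist $\lambda_0\in\mathbb{R}$ and $\delta\in(0,\pi/2)$ such that the shifted sector $\Sigma_{\lambda_0,\delta}$ lies in the resolvent set $\rho(\mathcal{A})$ and the resolvent obeys $\|(\zeta-\mathcal{A})^{-1}\|_{\mathcal{B}(X)}\le M/|\zeta-\lambda_0|$ for all $\zeta\in\Sigma_{\lambda_0,\delta}$ and some $M>0$ (see, e.g., \cite{lunardi2009interpolation}). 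I would apply this with $X=H^1$, $\mathcal{A}=A(\eta)$ with domain $H^3$, and $\lambda_0=\lambda$, $\delta$ the constants furnished by Lemma~\ref{lem:resolvent_estimate}.

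First I would observe that $H^3$ is dense in $H^1$, so the density requirement on the domain is automatic. Lemma~\ref{lem:resolvent_set} already gives $\Sigma_{\lambda,\delta}\subset\rho(A(\eta))$, with $(\zeta-A(\eta))^{-1}$ mapping $H^1$ boundedly into $H^3$ for each such $\zeta$, so it remains only to extract the quantitative resolvent decay from Lemma~\ref{lem:resolvent_estimate}. Taking $s=1$ there, so that $A_1(\eta)=A(\eta)$, and writing $\zeta=z+\lambda$ with $z\in\Sigma_{0,\delta}$, I would apply the estimate \eqref{eq:resolvent_estimate} to $\gamma=(\zeta-A(\eta))^{-1}u$ for an arbitrary $u\in H^1$:
\begin{align*}
    \|u\|_{H^1} &= \|(\zeta-A(\eta))\gamma\|_{H^1}
    \ge C_{\delta,\eta}\left(|z|\,\|\gamma\|_{H^1}+\|\gamma\|_{H^3}\right) \\
    &\ge C_{\delta,\eta}\,|\zeta-\lambda|\,\|(\zeta-A(\eta))^{-1}u\|_{H^1}.
\end{align*}
Hence $\|(\zeta-A(\eta))^{-1}\|_{\mathcal{B}(H^1)}\le C_{\delta,\eta}^{-1}/|\zeta-\lambda|$ throughout $\Sigma_{\lambda,\delta}$. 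Since $\delta<\pi/2$, the half-opening $\pi-\delta$ of the sector exceeds $\pi/2$, so all hypotheses of the generation theorem hold with $M=C_{\delta,\eta}^{-1}$, and $A(\eta)$ generates an analytic semigroup on $H^1$ with domain $H^3$.

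There is essentially no genuine obstacle in this step: the analytic content is entirely carried by the resolvent estimate of Lemma~\ref{lem:resolvent_estimate} and the invertibility of Lemma~\ref{lem:resolvent_set}, and the present statement is a bookkeeping consequence of the abstract theorem. The only point deserving a line of care is the matching of conventions — checking that the sector $\Sigma_{\lambda,\delta}$ as defined in the text, with half-opening $\pi-\delta>\pi/2$, is precisely the sector appearing in the analytic-semigroup generation theorem, and that the quantity $|z|$ in \eqref{eq:resolvent_estimate} equals $|\zeta-\lambda|$, which is immediate from the definition of $\Sigma_{\lambda,\delta}$.
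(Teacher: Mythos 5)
Your proposal is correct and follows essentially the same route as the paper: both arguments combine Lemma~\ref{lem:resolvent_set} (sector in the resolvent set) with the quantitative bound of Lemma~\ref{lem:resolvent_estimate} applied to $\gamma=(\zeta-A(\eta))^{-1}u$ to obtain $\|(\zeta-A(\eta))^{-1}\|_{\mathcal{B}(H^1)}\le C_{\delta,\eta}^{-1}/|\zeta-\lambda|$, and then invoke the standard sectoriality criterion. Your extra remark on the density of $H^3$ in $H^1$ is a harmless (and correct) addition that the paper leaves implicit.
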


\begin{proof}
    Let $\eta\in V$. Then, it follows from Lemma~\ref{lem:resolvent_estimate} that there exists a $\lambda\ge0$ such that the estimate \eqref{eq:resolvent_estimate} holds.
    Furthermore, Lemma~\ref{lem:resolvent_set} yields that the set $\Sigma_{\lambda,\delta}$ is contained in the resolvent set of $A(\eta)\colon H^3\subset H^1\to H^1$.
    The estimate \eqref{eq:resolvent_estimate} yields 
    \begin{align*}
        |z|\|(z+\lambda-A(\eta))^{-1}u\|_{H^1} \le C_{\delta,\eta}^{-1}\|u\|_{H^1}
    \end{align*}
    for any $z\in\Sigma_{0,\delta}$ and $u\in H^1$.
    Namely, we have
    \begin{align*}
        \|(\zeta-A(\eta))^{-1}\|_{\mathcal{B}(H^1)} \le \frac{C_\delta^{-1}}{|\zeta-\lambda|}
    \end{align*}
    for any $\zeta\in\Sigma_{\lambda,\delta}$.
    This shows that $A(\eta)$ is a sectorial operator, and it generates an analytic semigroup on $H^1$ with domain $H^3$.
\end{proof}

\begin{proof}[Proof of Proposition~\ref{prop:MR}]
    The fact that $A(\eta)$ generates an analytic semigroup (Corollary~\ref{cor:analytic_semigroup}) guarantees maximal regularity for Hilbert spaces \cite{deSimon1964Un’applicazione}.
    Namely, for any $T>0$ and $g\in L^2(0,T;H^1)$, there exists a unique solution $\gamma$ to the problem \eqref{eq:problem_MR}.
    We derive the estimate \eqref{eq:estimate_MR} below.

    Consider the equation
    \begin{align}
        \begin{dcases*}
            \partial_t \gamma = (A(\eta)-\lambda)\gamma + g & in $(0,T]$,\\
            \gamma(0) = 0,
        \end{dcases*}
        \label{eq:problem_modified}
    \end{align}
    where $\lambda$ is the constant from Lemma~\ref{lem:resolvent_estimate} and $g\in L^2(0,T;H^1)$.
    We extend $g$ by zero to $\mathbb{R}$ and define the Fourier transform in time by $\hat{\gamma}(\omega) \coloneqq \int_{-\infty}^\infty\gamma(t)\mathrm{e}^{-\mathrm{i}\omega t}\,\mathrm{d}t$.
    Taking the Fourier transform of \eqref{eq:problem_modified} in time, we obtain
    \begin{align*}
        (\mathrm{i}\omega + \lambda - A(\eta))\hat{\gamma}(\omega) = \hat{g}(\omega).
    \end{align*}
    Then, it follows from \eqref{eq:f_lower} and \eqref{eq:f'_lower} (with $z=\mathrm{i}\omega$) that 
    \begin{align*}
        2C_{\delta,\eta}^2\left(|\omega|+\lambda\right)^2\|\hat{\gamma}(\omega)\|_{H^1}^2 \le \|\hat{g}(\omega)\|_{H^1}^2
    \end{align*}
    for $\omega\in\mathbb{R}$; that is,
    \begin{align*}
        \|\hat{\gamma}\|_{L_\omega^2(H^1)} \le C_{\delta,\eta}^{-1}\|\hat{g}\|_{L_\omega^2(H^1)}.
    \end{align*}
    Lemma~\ref{lem:resolvent_estimate} further gives 
    \begin{align*}
        C_{\delta,\eta}\|\mathrm{i}\omega\hat{\gamma}(\omega)\|_{H^1}\le\|\hat{g}(\omega)\|_{H^1}\quad(\omega\in\mathbb{R});\quad\text{that is,}\quad \|\mathrm{i}\omega\hat{\gamma}\|_{L_\omega^2(H^1)} \le C_{\delta,\eta}^{-1}\|\hat{g}\|_{L_\omega^2(H^1)}.
    \end{align*}
    Likewise, using the estimates \eqref{eq:f'_lower} and \eqref{eq:f_lower}, we have
    \begin{align*}
        \|(z+\lambda-A(\eta))\gamma\|_{H^1}^2 \ge 2C_{\delta,\eta}^2\left((|z|+\lambda)^2\|\gamma\|_{H^1}^2 + \|\gamma''\|_{H^1}^2\right)
    \end{align*}
    for all $z\in\Sigma_{0,\delta}$.
    Applying this inequality with $\gamma$ and $z$ replaced by $\hat{\gamma}(\omega)$ and $\mathrm{i}\omega$, respectively, we obtain
    \begin{align*}
        C_{\delta,\eta}^2\|(\mathrm{i}\omega+\lambda)\hat{\gamma}(\omega)\|_{H^1}^2 \le \|(\mathrm{i}\omega+\lambda-A(\eta))\hat{\gamma}(\omega)\|_{H^1}^2 = \|\hat{g}(\omega)\|_{H^1}^2,
    \end{align*}
    which leads to
    \begin{align*}
        \|(\mathrm{i}\omega+\lambda)\hat{\gamma}\|_{L_\omega^2(H^1)} \le C_{\delta,\eta}^{-1}\|\hat{g}\|_{L_\omega^2(H^1)}.
    \end{align*}
    Therefore, we see that
    \begin{align*}
        \|A(\eta)\hat{\gamma}\|_{L_\omega^2(H^1)}
        =\|(\mathrm{i}\omega+\lambda)\hat{\gamma} - \hat{g}\|_{L_\omega^2(H^1)}
        \le\left(1+C_{\delta,\eta}^{-1}\right)\|\hat{g}\|_{L_\omega^2(H^1)}.
    \end{align*}
    Applying Plancherel's theorem, we obtain 
    \begin{gather*}
        \|\gamma\|_{L^2(0,T;H^1)} \le C_{\delta,\eta}^{-1}\|g\|_{L^2(0,T;H^1)},\qquad
        \|\partial_t \gamma\|_{L^2(0,T;H^1)} \le C_{\delta,\eta}^{-1}\|g\|_{L^2(0,T;H^1)},\\
        \|A(\eta)\gamma\|_{L^2(0,T;H^1)} \le \left(1+C_{\delta,\eta}^{-1}\right)\|g\|_{L^2(0,T;H^1)}.
    \end{gather*}
    Since $A(\eta)=B(\eta)\partial_x^2$ and $B(\eta)\ge B_\ast$ holds as in \eqref{eq:B_ast}, the estimate for $A(\eta)\gamma$ implies $\|\gamma\|_{L^2(0,T;H^3)} \le C \|g\|_{L^2(0,T;H^1)}$.
    Consequently, there exists a constant $\hat{C}$, independent of $\gamma$ and $T$, such that
    \begin{align}
        \|\gamma\|_{\mathbb{E}(0,T)} \le \hat{C}\|g\|_{L^2(0,T;H^1)}.
        \label{eq:estimate_lambda}
    \end{align}

    Finally, consider the original problem
    \begin{align*}
        \begin{dcases*}
            \partial_t \gamma = A(\eta)\gamma + g & in $\mathbb{S}^1\times(0,T]$,\\
            \gamma(0) = 0.
        \end{dcases*}
    \end{align*}
    Let $\tilde{\gamma}(t)\coloneqq\mathrm{e}^{-\lambda t}\gamma(t)$ and $\tilde{g}(t) \coloneqq \mathrm{e}^{-\lambda t}g(t)$.
    Then, $\tilde{\gamma}$ satisfies the modified equation \eqref{eq:problem_modified} with source term $\tilde{g}$.
    The estimate~\eqref{eq:estimate_lambda} then yields
    \begin{align*}
        \|\tilde{\gamma}\|_{\mathbb{E}(0,T)} \le \hat{C}\|\tilde{g}\|_{L^2(0,T;H^1)},
    \end{align*}
    which implies
    \begin{align*}
        \|\gamma\|_{\mathbb{E}(0,T)} \le \sqrt{1+2\lambda^2}\mathrm{e}^{\lambda T}\hat{C}\|g\|_{L^2(0,T;H^1)}.
    \end{align*}
    This completes the proof.
\end{proof}

Let us consider the problem \eqref{eq:problem_MR} with general initial data;
that is,
\begin{align}
    \begin{dcases*}
        \partial_t \gamma + A(\eta)\gamma = g&in $(0,T]$,\\
        \gamma(0) = \gamma_0,
    \end{dcases*}
    \label{eq:problem_MR_general}
\end{align}
where $\gamma_0 \in H^2$. In this case, we decompose the above problem into two problems: the inhomogeneous problem~\eqref{eq:problem_MR} with zero initial data and the homogeneous problem with initial data $\gamma_0$.
For the latter problem, using the trace method of interpolation spaces, we see that there exists a positive constant $\tilde{C}$, independent of $\gamma_0$, such that the estimate
\begin{align*}
    \|\tilde{\gamma}\|_{\mathbb{E}(0,T)} \le \tilde{C}\|\gamma_0\|_{H^2}
\end{align*}
holds for all $\gamma_0 \in H^2 = (H^1, H^3)_{1/2,2}$. Combining the above estimate with the maximal $L^2$-regularity (Proposition~\ref{prop:MR}) implies the existence of a constant $C_{\mathrm{MR}}>0$, independent of $\gamma$, such that
\begin{align}
    \|\gamma\|_{\mathbb{E}(0,T)}\le C_{\mathrm{MR}}(\|g\|_{L^2(0,T;H^1)}+\|\gamma_0\|_{H^2})
    \label{eq:MR_ineq}
\end{align}
holds for all $g\in L^2(0,T;H^1)$ and $\gamma_0\in H^2$, where $\gamma$ denotes the unique solution to \eqref{eq:problem_MR_general}.
Moreover, for any fixed initial data $\gamma_0 \in H^2$, the solution $\gamma$ satisfies
\begin{align}
    \|\gamma\|_{\mathbb{E}(0,T)}\to0\qquad\text{as }T\searrow0,
    \label{eq:MR_ineq_limit}
\end{align}
since the norm of $\mathbb{E}(0,T)$ is defined by time integrals over $(0,T)$.

The proof of Corollary~\ref{cor:analytic_semigroup} implies that there exists a positive constant $M$ such that
\begin{align}
    \|\mathrm{e}^{tA(\eta)}\|_{\mathcal{B}(H^2)}\le M\mathrm{e}^{\lambda t}
    \label{eq:semigroup_estimate_2}
\end{align}
holds for all $t>0$.
This follows from the fact that $H^2$ is the interpolation space $(H^1,H^3)_{\frac{1}{2},2}$.

\section{Well-posedness and regularity}
\label{sec:well-posedness}

\subsection{Well-posedness}

In this subsection, we establish the local well-posedness of the modified problem \eqref{eq:Langmuir_rewrite}.
Recall that the problem is formulated as an abstract evolution equation:
\begin{equation}
    \begin{dcases*}
        \partial_t \gamma = A(\gamma)\gamma + f(\gamma) & in $\mathbb{S}^1\times(0,T]$,\\
        \gamma(0) = \gamma_1.
    \end{dcases*}
    \label{eq:abstract_IVP}
\end{equation}

\begin{theorem} \label{thm:existence}
    Let $\gamma_0\in V$.
    Then there exist $T=T(\gamma_0)>0$ and $\varepsilon=\varepsilon(\gamma_0)>0$ such that $B_\varepsilon(\gamma_0)\subset V$, and for every $\gamma_1\in B_\varepsilon(\gamma_0)$, the problem \eqref{eq:abstract_IVP} admits a unique solution
    \begin{align*}
        \gamma(\cdot)=\gamma(\cdot;\gamma_1)\in \mathbb{E}(0,T) \cap C([0,T];V).
    \end{align*}
    Furthermore, the solution map is Lipschitz continuous with respect to the initial data: there exists a constant $C=C(\gamma_0)>0$ such that
    \begin{align*}
        \|\gamma(\cdot;\gamma_1)-\gamma(\cdot;\gamma_2)\|_{\mathbb{E}(0,T)}\le C\|\gamma_1-\gamma_2\|_{H^2}
    \end{align*}
    holds for all $\gamma_1,\gamma_2\in B_\varepsilon(\gamma_0)$.
\end{theorem}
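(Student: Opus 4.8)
The plan is to solve \eqref{eq:abstract_IVP} by a contraction argument after freezing the top-order coefficient at the reference curve $\gamma_0$. Since $V$ is open (Lemma~\ref{lem:V_open}), fix $r_0>0$ with $\overline{B_{r_0}(\gamma_0)}\subset V$ and put $A_0\coloneqq A(\gamma_0)$, which has maximal $L^2$-regularity on $H^1$ by Proposition~\ref{prop:MR}; in particular the solution operator of $\partial_t\xi=A_0\xi+g$, $\xi(0)=0$, is bounded from $L^2(0,T;H^1)$ to $\mathbb{E}(0,T)$ with norm $C_{\mathrm{MR}}$ uniformly in $T$, and the homogeneous problem with datum $\gamma_1\in H^2$ has solution $w=w(\gamma_1)$ with $w(t)=\mathrm{e}^{tA_0}\gamma_1\in\mathbb{E}(0,T)$ and $\|w\|_{\mathbb{E}(0,T)}\le\tilde{C}\|\gamma_1\|_{H^2}$, cf.\ \eqref{eq:MR_ineq}. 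For $\gamma_1\in B_\varepsilon(\gamma_0)$ we look for the solution as $\gamma=w+\zeta$ with $\zeta(0)=0$; then $\gamma$ solves \eqref{eq:abstract_IVP} iff $\zeta$ is a fixed point of the map $\Psi$ assigning to $\zeta$ the solution of $\partial_t\xi=A_0\xi+G(w+\zeta)$, $\xi(0)=0$, where $G(\eta)\coloneqq(A(\eta)-A_0)\eta+f(\eta)=(B(\eta)-B(\gamma_0))\partial_x^2\eta+f(\eta)$, so that $\|\Psi(\zeta)\|_{\mathbb{E}(0,T)}\le C_{\mathrm{MR}}\|G(w+\zeta)\|_{L^2(0,T;H^1)}$.

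The core of the argument is the bound for $\|G(w+\zeta)\|_{L^2(0,T;H^1)}$. By Proposition~\ref{prop:A_C-inf}, $A\colon V\to\mathcal{B}(H^3,H^1)$ is continuous, so for a prescribed $\delta>0$ there is $\rho=\rho(\delta)\in(0,r_0]$ with $\|A(\sigma)-A_0\|_{\mathcal{B}(H^3,H^1)}<\delta$ whenever $\|\sigma-\gamma_0\|_{H^2}<\rho$. Using the embedding \eqref{eq:E-C_embed}, the bound \eqref{eq:semigroup_estimate_2} together with the strong continuity of $\mathrm{e}^{tA_0}$ on $H^2=(H^1,H^3)_{1/2,2}$ at the fixed element $\gamma_0$ (which gives $\|w-\gamma_1\|_{C([0,T];H^2)}\le o_T(1)+C\varepsilon$), and $\|\gamma_1-\gamma_0\|_{H^2}<\varepsilon$, one ensures that $(w+\zeta)(t)\in B_\rho(\gamma_0)$ for all $t\in[0,T]$ provided $\varepsilon$, $T$, and the $\mathbb{E}(0,T)$-radius $R$ of $\zeta$ are small. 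Then, splitting $w+\zeta$ and estimating termwise,
\[
    \|(A(w+\zeta)-A_0)(w+\zeta)\|_{L^2(0,T;H^1)}\le\delta\bigl(\|w\|_{L^2(0,T;H^3)}+R\bigr),
\]
while Proposition~\ref{prop:f} gives $C_f\coloneqq\sup_{\sigma\in\overline{B_\rho(\gamma_0)}}\|f(\sigma)\|_{H^1}<\infty$, hence $\|f(w+\zeta)\|_{L^2(0,T;H^1)}\le C_f\sqrt{T}$. Since $\|w\|_{L^2(0,T;H^3)}^2=\int_0^T\|w(t)\|_{H^3}^2\,\mathrm{d}t\to0$ as $T\searrow0$, choosing the parameters in the order $\delta$ (small, absolute), then $\rho(\delta)$, then $R$ (small, absolute, with a second constraint from the contraction step), then $\varepsilon$, then $T$, makes $C_{\mathrm{MR}}\|G(w+\zeta)\|_{L^2(0,T;H^1)}\le R$, so $\Psi$ maps the complete metric space $\mathcal{Z}_{T,R}\coloneqq\{\zeta\in\mathbb{E}(0,T)\mid\zeta(0)=0,\ \|\zeta\|_{\mathbb{E}(0,T)}\le R\}$ into itself.

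For the contraction I would use that $A$ and $f$, being $C^\infty$ (Propositions~\ref{prop:A_C-inf} and~\ref{prop:f}), are Lipschitz on $\overline{B_\rho(\gamma_0)}$ with constants $L$, $L_f$. Writing $G(\eta_1)-G(\eta_2)=(A(\eta_1)-A(\eta_2))\eta_1+(A(\eta_2)-A_0)(\eta_1-\eta_2)+(f(\eta_1)-f(\eta_2))$ with $\eta_i=w+\zeta_i$, one bounds the three terms in $L^2(0,T;H^1)$ by $LB\|\zeta_1-\zeta_2\|_{\mathbb{E}(0,T)}(\|w\|_{L^2(0,T;H^3)}+R)$, $\delta\|\zeta_1-\zeta_2\|_{\mathbb{E}(0,T)}$, and $L_fB\sqrt{T}\,\|\zeta_1-\zeta_2\|_{\mathbb{E}(0,T)}$, giving $\|\Psi(\zeta_1)-\Psi(\zeta_2)\|_{\mathbb{E}(0,T)}\le q\|\zeta_1-\zeta_2\|_{\mathbb{E}(0,T)}$ with $q<1$ after the same smallness choices. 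Banach's theorem then yields a unique $\zeta\in\mathcal{Z}_{T,R}$, hence $\gamma=w+\zeta\in\mathbb{E}(0,T)$, with $\gamma\in C([0,T];V)$ by \eqref{eq:E-C_embed} since $\gamma(t)\in B_\rho(\gamma_0)\subset V$. Uniqueness in the full class $\mathbb{E}(0,T)\cap C([0,T];V)$ follows by observing that any solution $\tilde\gamma$ with datum $\gamma_1$ satisfies $\tilde\gamma-w\in\mathcal{Z}_{T',R}$ for $T'$ small (since $\|\tilde\gamma-w\|_{\mathbb{E}(0,T')}\to0$ and $\tilde\gamma(t)\to\gamma_1$ in $H^2$), hence equals the fixed point on $[0,T']$; a standard open--closed continuation argument extends the agreement to $[0,T]$.

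For the Lipschitz dependence, let $\gamma^{(i)}=w^{(i)}+\zeta^{(i)}$ solve \eqref{eq:abstract_IVP} with data $\gamma_i\in B_\varepsilon(\gamma_0)$; then $w^{(1)}-w^{(2)}(t)=\mathrm{e}^{tA_0}(\gamma_1-\gamma_2)$ gives $\|w^{(1)}-w^{(2)}\|_{\mathbb{E}(0,T)}\le\tilde{C}\|\gamma_1-\gamma_2\|_{H^2}$ by \eqref{eq:MR_ineq}, while $\zeta^{(1)}-\zeta^{(2)}$ solves the zero-data linear problem with forcing $G(\gamma^{(1)})-G(\gamma^{(2)})$, so the same Lipschitz bounds on $G$ yield $\|\zeta^{(1)}-\zeta^{(2)}\|_{\mathbb{E}(0,T)}\le q\bigl(\tilde{C}\|\gamma_1-\gamma_2\|_{H^2}+\|\zeta^{(1)}-\zeta^{(2)}\|_{\mathbb{E}(0,T)}\bigr)$, and absorbing the last term gives $\|\gamma^{(1)}-\gamma^{(2)}\|_{\mathbb{E}(0,T)}\le C\|\gamma_1-\gamma_2\|_{H^2}$. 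I expect the main obstacle to be precisely this quantitative control of the perturbation: because we freeze at $\gamma_0$ rather than at the varying datum $\gamma_1$, we must keep $\eta(t)$ in a fixed small $H^2$-ball around $\gamma_0$ for \emph{all} $t\in[0,T]$ so that $\|A(\eta(t))-A_0\|_{\mathcal{B}(H^3,H^1)}$ is uniformly small, which couples $R$, $\varepsilon$, and $T$; the parameters must be fixed in the order $\delta\to R\to\varepsilon\to T$ so that $C_{\mathrm{MR}}\delta$, $C_{\mathrm{MR}}LBR$, and $C_{\mathrm{MR}}L_fB\sqrt{T}$ are each below $1/3$ without circular dependence. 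Everything else (maximal regularity, smoothness of $A$ and $f$, and the embeddings) is already available from the preceding sections.
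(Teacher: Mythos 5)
Your proposal is correct and follows essentially the same strategy as the paper's proof: freeze the principal coefficient at $A_0=A(\gamma_0)$, invoke the maximal $L^2$-regularity of Proposition~\ref{prop:MR} and the Lipschitz bounds for $A$ and $f$ coming from their smoothness on $V$, keep the iterates in a fixed small $H^2$-ball around $\gamma_0$ so that $(A(\eta)-A_0)\eta$ is a small perturbation, and close the contraction using the smallness of $\|\mathrm{e}^{\bullet A_0}\gamma_0\|_{L^2(0,T;H^3)}$ and of $T^{1/2}$, with the same three-term splitting in the contraction and Lipschitz-dependence steps. The only difference is cosmetic — you recenter the fixed-point ball at the free evolution of $\gamma_1$ so that the unknown has zero initial data, whereas the paper centers it at the free evolution of $\gamma_0$ and keeps the datum $\gamma_1$ in the iteration map — and your explicit continuation argument for uniqueness in the full class is a welcome addition the paper leaves implicit.
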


\begin{proof}
    Since $\gamma_0\in V$, there exists a constant $\delta>0$ such that the chord-arc constant satisfies $|\gamma_0|_\ast\ge\delta$.
    Set $\varepsilon_0\coloneqq\delta/(2S_1)$. By the proof of Lemma~\ref{lem:V_open}, we have $B_{\varepsilon_0}(\gamma_0)\subset V$.
    
    From Propositions~\ref{prop:A_C-inf} and \ref{prop:f}, we know that $A \in C^\infty(V; \mathcal{B}(H^3; H^1))$ and $f \in C^\infty(V; H^1)$.
    Since $B_{\varepsilon_0}(\gamma_0)$ is a convex subset of $V$, both $A$ and $f$ are Lipschitz continuous on this ball.
    Specifically, there exist positive constants $L_A$ and $L_f$ such that
    \begin{align}
        \|(A(\eta_1)-A(\eta_2))\xi\|_{H^1}&\le L_A\|\eta_1-\eta_2\|_{H^2}\|\xi\|_{H^3},\label{eq:A_Lipschitz}\\
        \|f(\eta_1)-f(\eta_2)\|_{H^1}&\le L_f\|\eta_1-\eta_2\|_{H^2}, \label{eq:F_Lipschitz}
    \end{align}
    for all $\eta_1,\eta_2\in B_{\varepsilon_0}(\gamma_0)$ and $\xi\in H^3$.

    Fix an arbitrary reference time $T'>0$.
    Let $A_0 \coloneqq A(\gamma_0)$. By Proposition~\ref{prop:MR}, $A_0$ generates an analytic semigroup and enjoys maximal $L^2$-regularity.
    Let $\gamma_0^\ast \in \mathbb{E}(0,T')$ be the unique solution to the linearized problem with initial data $\gamma_0$:
    \begin{equation*}
        \partial_t \eta = A_0 \eta, \quad \eta(0) = \gamma_0.
    \end{equation*}
    
    We construct the solution using the Banach fixed point theorem.
    For $r>0$, $T\in(0,T']$, and $\gamma_1\in B_{\varepsilon_0}(\gamma_0)$, define the closed metric space $\mathbb{B}_{r,T,\gamma_1} \subset \mathbb{E}(0,T)$ by
    \begin{align*}
        \mathbb{B}_{r,T,\gamma_1} \coloneqq \left\{ \eta \in \mathbb{E}(0,T) \relmiddle| \eta(0)=\gamma_1,\ \|\eta-\gamma_0^\ast\|_{\mathbb{E}(0,T)} \le r \right\}.
    \end{align*}
    Also, let $\gamma_1^\ast \in \mathbb{E}(0,T')$ be the solution to $\partial_t \eta = A_0 \eta$ with $\eta(0) = \gamma_1$.

    \textit{Step 1: Invariance of the admissible set.}
    We show that for sufficiently small parameters, any function in the ball remains in $V$.
    Let $\eta \in \mathbb{B}_{r,T,\gamma_1}$.
    Using the triangle inequality and the embedding $\mathbb{E}(0,T) \hookrightarrow C([0,T]; H^2)$ (with constant $B$), we estimate the distance from $\gamma_0$:
    \begin{align*}
        \|\eta(t) - \gamma_0\|_{H^2}
        &\le \|\eta(t) - \gamma_1^\ast(t)\|_{H^2} + \|\gamma_1^\ast(t) - \gamma_0^\ast(t)\|_{H^2} + \|\gamma_0^\ast(t) - \gamma_0\|_{H^2} \\
        &\le B \|\eta - \gamma_1^\ast\|_{\mathbb{E}(0,T)} + \|\mathrm{e}^{tA_0}(\gamma_1 - \gamma_0)\|_{H^2} + \|\gamma_0^\ast(t) - \gamma_0\|_{H^2}.
    \end{align*}
    Note that 
    \begin{align*}
        \|\eta - \gamma_1^\ast\|_{\mathbb{E}(0,T)} \le \|\eta - \gamma_0^\ast\|_{\mathbb{E}(0,T)} + \|\gamma_0^\ast - \gamma_1^\ast\|_{\mathbb{E}(0,T)} \le r + C_{\mathrm{MR}}\|\gamma_0 - \gamma_1\|_{H^2}.
    \end{align*}
    Also, $\|\mathrm{e}^{tA_0}\|_{\mathcal{B}(H^2)} \le M \mathrm{e}^{\lambda T'}$.
    Thus, assuming $\gamma_1 \in B_\varepsilon(\gamma_0)$, we have
    \begin{align}
        \|\eta - \gamma_0\|_{C([0,T];H^2)}
        \le B(r + C_{\mathrm{MR}}\varepsilon) + M \mathrm{e}^{\lambda T'} \varepsilon + \|\gamma_0^\ast - \gamma_0\|_{C([0,T];H^2)}.
        \label{eq:eta-gamma0_ineq}
    \end{align}
    Since $\gamma_0^\ast(t) \to \gamma_0$ in $H^2$ as $t \to 0$, there exists $T_0 \in (0, T']$ such that $\|\gamma_0^\ast - \gamma_0\|_{C([0,T_0];H^2)} \le \varepsilon_0/3$.
    By choosing $r \le \varepsilon_0/(3B)$ and $\varepsilon \le \varepsilon_0 / (3(BC_{\mathrm{MR}} + M\mathrm{e}^{\lambda T'}))$, we ensure
    \begin{align*}
        \|\eta(t) - \gamma_0\|_{H^2} < \varepsilon_0 \quad \text{for all } t \in [0, T],
    \end{align*}
    which implies $\eta(t) \in B_{\varepsilon_0}(\gamma_0) \subset V$.

    \textit{Step 2: Definition of the mapping.}
    For $\gamma_1 \in B_\varepsilon(\gamma_0)$ and $\eta \in \mathbb{B}_{r,T,\gamma_1}$ (with parameters chosen as above), we define $\mathcal{T}_{\gamma_1}\eta$ as the solution $v$ to the linear problem:
    \begin{equation}
        \begin{dcases*}
            \partial_t v = A_0 v + f(\eta) + (A(\eta) - A_0)\eta & in $(0,T]$, \\
            v(0) = \gamma_1.
        \end{dcases*}
        \label{eq:sub-prob}
    \end{equation}
    Since $\eta(t) \in V$, the terms $f(\eta)$ and $(A(\eta)-A_0)\eta$ are well-defined.
    Using the Lipschitz bounds \eqref{eq:A_Lipschitz}--\eqref{eq:F_Lipschitz}, $f(\eta) \in L^2(0,T;H^1)$ and $(A(\eta)-A_0)\eta \in L^2(0,T;H^1)$ (since $\eta \in L^2(0,T;H^3)$).
    Thus, by Proposition~\ref{prop:MR}, there exists a unique solution $v \in \mathbb{E}(0,T)$. Hence $\mathcal{T}_{\gamma_1}$ is well-defined.

    \textit{Step 3: Self-mapping property.}
    We assume $T \in (0, T_0]$. The solution can be written as
    \begin{align*}
        \mathcal{T}_{\gamma_1}\eta = \gamma_1^\ast + \mathrm{e}^{\bullet A_0} \ast \left( f(\eta) + (A(\eta) - A_0)\eta \right).
    \end{align*}
    Subtracting $\gamma_0^\ast$, we estimate the norm in $\mathbb{E}(0,T)$:
    \begin{align*}
        \|\mathcal{T}_{\gamma_1}\eta - \gamma_0^\ast\|_{\mathbb{E}(0,T)}
        \le \|\gamma_1^\ast - \gamma_0^\ast\|_{\mathbb{E}(0,T)} + C_{\mathrm{MR}} \left( \|f(\eta)\|_{L^2(0,T;H^1)} + \|(A(\eta) - A_0)\eta\|_{L^2(0,T;H^1)} \right).
    \end{align*}
    The first term is bounded by $C_{\mathrm{MR}}\varepsilon$.
    For the integral terms, we use \eqref{eq:eta-gamma0_ineq} (which is bounded by $\varepsilon_0$) and the Lipschitz properties.
    \begin{align*}
        \|(A(\eta) - A_0)\eta\|_{L^2(0,T;H^1)} 
        &\le L_A \sup_{t \in [0,T]} \|\eta(t) - \gamma_0\|_{H^2} \|\eta\|_{L^2(0,T;H^3)} \\
        &\le L_A \varepsilon_0 \left( r + \|\gamma_0^\ast\|_{L^2(0,T;H^3)} \right).
    \end{align*}
    Since $\|\gamma_0^\ast\|_{L^2(0,T;H^3)} \to 0$ as $T \to 0$ (maximal regularity with zero forcing), we can make this term arbitrarily small by choosing $T$ small.
    Similarly,
    \begin{align*}
        \|f(\eta)\|_{L^2(0,T;H^1)} 
        &\le \|f(\eta) - f(\gamma_0)\|_{L^2(0,T;H^1)} + \|f(\gamma_0)\|_{L^2(0,T;H^1)} \\
        &\le L_f T^{1/2} \|\eta - \gamma_0\|_{C([0,T];H^2)} + T^{1/2} \|f(\gamma_0)\|_{H^1}.
    \end{align*}
    Both terms contain a factor of $T^{1/2}$ or vanish as $T \to 0$.
    Thus, by taking $T$ and $\varepsilon$ sufficiently small (keeping $r$ fixed), we can ensure $\|\mathcal{T}_{\gamma_1}\eta - \gamma_0^\ast\|_{\mathbb{E}(0,T)} \le r$.
    Hence, $\mathcal{T}_{\gamma_1}$ maps $\mathbb{B}_{r,T,\gamma_1}$ into itself.

    \textit{Step 4: Contraction property.}
    Let $\eta_1, \eta_2 \in \mathbb{B}_{r,T,\gamma_1}$. The triangle inequality leads to
    \begin{align*}
        &\|\mathcal{T}_{\gamma_1}\eta_1 - \mathcal{T}_{\gamma_1}\eta_2\|_{\mathbb{E}(0,T)} \\
        &\le C_{\mathrm{MR}} \left( \|f(\eta_1) - f(\eta_2)\|_{L^2(H^1)} + \|(A(\eta_1) - A_0)\eta_1 - (A(\eta_2) - A_0)\eta_2\|_{L^2(H^1)} \right).
    \end{align*}
    Using the Lipschitz estimates, we have
    \begin{align*}
        \|f(\eta_1) - f(\eta_2)\|_{L^2(H^1)} 
        &\le L_f \|\eta_1 - \eta_2\|_{L^2(0,T;H^2)} 
        \le L_f T^{1/2} B \|\eta_1 - \eta_2\|_{\mathbb{E}(0,T)}.
    \end{align*}
    For the operator term, we obtain
    \begin{align*}
        &\|(A(\eta_1) - A(\eta_2))\eta_1 + (A(\eta_2) - A_0)(\eta_1 - \eta_2)\|_{L^2(H^1)} \\
        &\le L_A \|\eta_1 - \eta_2\|_{C(H^2)} \|\eta_1\|_{L^2(H^3)} + L_A \|\eta_2 - \gamma_0\|_{C(H^2)} \|\eta_1 - \eta_2\|_{L^2(H^3)}.
    \end{align*}
    The first term is bounded by $L_A B \|\eta_1-\eta_2\|_{\mathbb{E}(0,T)} (r + \|\gamma_0^\ast\|_{L^2(H^3)})$.
    The second term is bounded by $L_A \varepsilon_0 \|\eta_1 - \eta_2\|_{\mathbb{E}(0,T)}$.
    By choosing $T$ sufficiently small (to reduce $\|\gamma_0^\ast\|_{L^2(H^3)}$ and the $T^{1/2}$ factor) and $r$ small, we can make the Lipschitz constant of $\mathcal{T}_{\gamma_1}$ less than $1/2$.

    \textit{Step 5: Existence and continuous dependence.}
    By the Banach fixed point theorem, there exists a unique fixed point $\gamma \in \mathbb{B}_{r,T,\gamma_1}$, which is the solution to \eqref{eq:abstract_IVP}.
    Since $\gamma \in \mathbb{E}(0,T)$, it implies $\gamma \in C([0,T]; H^2)$, and by Step 1, $\gamma(t) \in V$.
    The Lipschitz dependence on the initial data follows from standard arguments using the contraction property and the linearity of the equation with respect to initial data differences.
\end{proof}


\subsection{Regularity and instantaneous smoothing}

In this subsection, we prove that the solution $\gamma$ obtained in Theorem~\ref{thm:existence} becomes instantly smooth for $t > 0$.

\begin{theorem}\label{thm:regularity}
    Let $\gamma_0 \in V$. Let $\gamma \in \mathbb{E}(0,T) \cap C([0,T];V)$ be the solution to the problem~\eqref{eq:Langmuir_rewrite} obtained by Theorem~\ref{thm:existence}.
    Then $\gamma$ is smooth in space and time; that is, $\gamma \in C^\infty(\mathbb{S}^1 \times (0, T))$.
\end{theorem}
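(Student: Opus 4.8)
The plan is to combine the so-called \emph{parameter trick} (time rescaling together with a parametrized contraction argument) with a spatial bootstrap. The parameter trick upgrades the solution $\gamma$ from $\mathbb{E}(0,T)\cap C([0,T];V)$ to a function that is $C^\infty$ in time with values in $H^2$; the bootstrap then uses the equation \eqref{eq:Langmuir_rewrite}, solved for the top-order term $\partial_x^2\gamma$, together with the Banach-algebra property of $H^s(\mathbb{S}^1)$ to promote this to joint smoothness. By the autonomy of \eqref{eq:Langmuir_rewrite}, the uniqueness from Theorem~\ref{thm:existence}, and the fact that the existence time is bounded below on compact subsets of $V$, it suffices to prove that any solution emanating from data $\gamma_0\in V$ is smooth on a short interval $(0,T_0)$: the general statement on $(0,T)$ follows by restarting the evolution at interior times $t_\ast\in(0,T)$ with data $\gamma(t_\ast)\in V$ and patching the resulting neighbourhoods.

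\textit{Stage 1 (time regularity).} For $\lambda$ in a neighbourhood $\Lambda_0$ of $1$ in $\mathbb{R}$, the rescaled curve $\gamma_\lambda(t,\cdot):=\gamma(\lambda t,\cdot)$ solves the same equation with the factor $\lambda$ in front of the right-hand side,
\begin{equation*}
    \partial_t\gamma_\lambda = \lambda\bigl(A(\gamma_\lambda)\gamma_\lambda + f(\gamma_\lambda)\bigr),\qquad \gamma_\lambda(0)=\gamma_0.
\end{equation*}
Writing $A_0:=A(\gamma_0)$ and repeating the fixed-point construction of Theorem~\ref{thm:existence} — now with the linear forcing $\lambda f(\eta)+\lambda\bigl(A(\eta)-A_0\bigr)\eta+(\lambda-1)A_0\eta$ — defines for each $\lambda\in\Lambda_0$ a map $\mathcal T^\lambda$ on a ball $\mathbb{B}_{r,T,\gamma_0}\subset\mathbb{E}(0,T)$ whose fixed point is exactly $\gamma_\lambda$ (by uniqueness). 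Since the extra terms carry a factor $\lambda$ or $(\lambda-1)$ and $\|A_0\eta\|_{L^2(0,T;H^1)}\lesssim\|\eta\|_{L^2(0,T;H^3)}\le\|\eta\|_{\mathbb{E}(0,T)}$, shrinking $\Lambda_0$, $r$, and $T$ keeps $\mathcal T^\lambda$ a uniform contraction; its dependence on $\lambda$ is affine, hence $C^\infty$. By the uniform contraction principle the fixed point $\eta^\lambda$ depends $C^\infty$ on $\lambda$ as an $\mathbb{E}(0,T)$-valued map. Differentiating the identity $\eta^\lambda(t,\cdot)=\gamma(\lambda t,\cdot)$ in $\lambda$ at $\lambda=1$ — legitimate because $\lambda\mapsto\eta^\lambda$ is $C^\infty$ into $\mathbb{E}(0,T)\hookrightarrow C([0,T];H^2)$ by \eqref{eq:E-C_embed} — yields $\partial_\lambda^k\eta^\lambda|_{\lambda=1}(t,\cdot)=t^k\partial_t^k\gamma(t,\cdot)$, so $t^k\partial_t^k\gamma\in\mathbb{E}(0,T)$ for every $k\in\mathbb{N}$. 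On any $[\alpha,\beta]\Subset(0,T)$ the weight $t^k$ is bounded below, hence $\partial_t^k\gamma\in\mathbb{E}(\alpha,\beta)\hookrightarrow C([\alpha,\beta];H^2)$ for all $k$, i.e. $\gamma\in C^\infty\bigl((0,T);H^2\bigr)$.

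\textit{Stage 2 (spatial bootstrap).} We prove by induction that $\gamma\in C^\infty(I;H^k)$ for every $k\ge2$ and every $I\Subset(0,T)$; the case $k=2$ is Stage 1. Assuming it for some $k\ge2$, solve \eqref{eq:Langmuir_rewrite} for the top-order term:
\begin{equation*}
    \partial_x^2\gamma = \pi\,|\partial_x\gamma|^2\bigl(\partial_t\gamma - F(\gamma)\nu(\gamma)\bigr).
\end{equation*}
From $\gamma\in C^\infty(I;H^k)$ we get $\partial_t\gamma\in C^\infty(I;H^k)$ and $|\partial_x\gamma|^2\in C^\infty(I;H^{k-1})$; moreover $\nu(\cdot)$ maps $V\cap H^k$ smoothly into $H^{k-1}$, and — extending Lemmas~\ref{lem:g1_g2}, \ref{lem:h}, and \ref{lem:F} to allow up to $k-1$ spatial derivatives on the kernel, the geometric cancellation $g_1,g_2=O(d_{\mathbb{S}^1}^2)$ continuing to dominate $h=O(d_{\mathbb{S}^1}^{-3})$ — the operator $F$ maps $V\cap H^k$ smoothly into $H^{k-1}$. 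Since $H^{k-1}(\mathbb{S}^1)$ is a Banach algebra, the right-hand side lies in $C^\infty(I;H^{k-1})$, whence $\gamma\in C^\infty(I;H^{k+1})$. Letting $k\to\infty$ gives $\gamma\in\bigcap_{k}C^\infty(I;H^k)=C^\infty\bigl(I;C^\infty(\mathbb{S}^1)\bigr)\subset C^\infty(I\times\mathbb{S}^1)$, and since $I\Subset(0,T)$ is arbitrary, $\gamma\in C^\infty(\mathbb{S}^1\times(0,T))$.

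\textbf{Main obstacle.} The genuinely technical step is the higher-regularity mapping property $F\in C^\infty(V\cap H^k;H^{k-1})$ (and likewise for $f=F(\cdot)\nu(\cdot)$) required to close the bootstrap: one must reprove the kernel estimates of Section~\ref{sec:preliminaries} with additional spatial derivatives, checking that each extra derivative is absorbed either by the two extra powers of $d_{\mathbb{S}^1}$ in each of $g_1,g_2$ or by a Hardy--Littlewood maximal function controlled in $L^2$ via Proposition~\ref{prop:HL}. Secondary points, routine in comparison, are the smoothness of the Nemytskii maps $\eta\mapsto A(\eta)\eta$ and $\eta\mapsto f(\eta)$ from $\mathbb{E}(0,T)$ into $L^2(0,T;H^1)$ (a consequence of Propositions~\ref{prop:A_C-inf} and \ref{prop:f} together with $\mathbb{E}(0,T)\hookrightarrow C([0,T];H^2)$ and $\mathbb{E}(0,T)\subset L^2(0,T;H^3)$), the uniform-contraction estimate for $\mathcal T^\lambda$ (a perturbation of the estimates in Theorem~\ref{thm:existence}), and the covering argument that passes from $(0,T_0)$ to $(0,T)$ using the lower bound on the existence time over compact subintervals.
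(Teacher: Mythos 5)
Your proposal is correct and follows essentially the same route as the paper: the parameter trick (time rescaling) to obtain $C^\infty$ time regularity with values in $H^2$, followed by a spatial bootstrap that solves \eqref{eq:Langmuir_rewrite} for $\partial_x^2\gamma$ and uses the Banach-algebra property together with higher-order mapping properties of $F$. The only packaging difference is that you invoke the uniform contraction principle for the $\lambda$-parametrized fixed-point map where the paper applies the implicit function theorem to $H(\lambda,\eta,\psi)$ (proving invertibility of the linearization via maximal regularity plus a contraction estimate) — these are interchangeable here, and you are in fact more explicit than the paper about the two points it glosses over, namely the higher-regularity mapping property $F\colon V\cap H^k\to H^{k-1}$ needed to close the bootstrap and the restarting argument needed to cover all of $(0,T)$.
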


\begin{proof}
    Let $\gamma_0 \in V$.
    We denote the solution to problem~\eqref{eq:Langmuir_rewrite} by $\gamma_* \in \mathbb{E}(0,T) \cap C([0,T];V)$.
    Since $\gamma_*(\cdot, t) \in V$ for all $t \in [0, T]$, it follows from Proposition~\ref{prop:MR} that $A(\gamma_*(t))$ enjoys the property of maximal $L^2$-regularity on $H^1$ for each $t\in[0,T]$.
    By Theorem~\ref{thm:existence}, there exist $T=T(\gamma_0)>0$ and $\varepsilon=\varepsilon(\gamma_0)>0$ such that for each $\eta \in B_\varepsilon(\gamma_0) \subset V$, problem~\eqref{eq:Langmuir_rewrite} with the initial data $\eta$ has a unique solution $\gamma(\cdot; \eta) \in \mathbb{E}(0,T) \cap C([0,T];V)$.
    
    We employ the ``parameter trick'' (time scaling) to establish time regularity.
    Define the scaled function $\gamma_\lambda : \mathbb{S}^1 \times [0, T/(1+\delta)] \to \mathbb{R}^2$ by 
    \[
        \gamma_\lambda(x,t; \eta) \coloneqq \gamma(x, \lambda t; \eta), \qquad \lambda \in (1-\delta, 1+\delta), \quad \eta \in B_\varepsilon(\gamma_0) \subset V, 
    \]
    where $\delta>0$ is a sufficiently small constant.
    Consider the map 
    \[
        H: (1-\delta, 1+\delta) \times B_\varepsilon(\gamma_0) \times \mathbb{E}(0,T/(1+\delta)) \to L^2(0,T;H^1) \times V
    \]
    defined by
    \[
        H(\lambda, \eta, \psi)(t) \coloneqq \bigl( \partial_t \psi(t) - \lambda A(\psi(t)) \psi(t) - \lambda f(\psi(t)), \psi(0)-\eta \bigr), \qquad t \in [0, T/(1+\delta)].
    \]
    It follows from Propositions~\ref{prop:A_C-inf} and \ref{prop:f} that $A \in C^\infty(V; \mathcal{B}(H^3; H^1))$ and $f \in C^\infty(V; H^1)$.
    Consequently, the map $H$ is of class $C^\infty$. 
    We have $H(1, \gamma_0, \gamma_*)=0$. The Fréchet derivative of $H$ with respect to $\psi$ at $(1, \gamma_0, \gamma_*)$ is given by
    \begin{align*}
        \mathcal{D}_\psi H(1, \gamma_0, \gamma_*) [\varphi] = ( \partial_t \varphi - A(\gamma_*) \varphi - \mathcal{D}A(\gamma_*)[\varphi]\gamma_\ast - \mathcal{D}f(\gamma_*)[\varphi], \varphi(0)),
    \end{align*}
    for $\varphi \in \mathbb{E}(0,T/(1+\delta))$.
    Here, the Fréchet derivative of $A$ is explicitly given by
    \begin{align*}
        \mathcal{D}A(\psi)[\varphi]\psi 
        \coloneqq \left. \frac{\mathrm{d}}{\mathrm{d}\varepsilon} A(\psi + \varepsilon \varphi) \psi \right|_{\varepsilon = 0}
        = -\frac{2}{\pi} \frac{\partial_x \psi \cdot \partial_x \varphi}{|\partial_x \psi|^4}\partial_x^2 \psi.
    \end{align*}

    We claim that the linearized operator $\mathcal{L} \coloneqq \mathcal{D}_\psi H(1, \gamma_0, \gamma_*)$ is an isomorphism from $\mathbb{E}(0,T/(1+\delta))$ to $L^2(0,T/(1+\delta);H^1) \times V$.
    Consider the equation $\mathcal{L}\varphi = (g, \eta)$. This is equivalent to finding $\varphi$ such that
    \begin{equation} \label{eq:reg-eq}
        \begin{dcases*}
            \partial_t \varphi - A(\gamma_\ast)\varphi = \mathcal{D}A(\gamma_\ast)[\varphi]\gamma_\ast + \mathcal{D}f(\gamma_\ast)[\varphi] + g & in $(0,T/(1+\delta)]$,\\
            \varphi(0) = \eta.
        \end{dcases*}
    \end{equation}
    To prove the unique solvability, we treat the right-hand side terms involving $\varphi$ as perturbations.
    Define the mapping $\mathcal{K}$ on $\mathbb{E}(0,T/(1+\delta))$ by letting $\mathcal{K}(\varphi)$ be the solution $v$ to
    \begin{equation*}
        \partial_t v - A(\gamma_\ast)v = \mathcal{D}A(\gamma_\ast)[\varphi]\gamma_\ast + \mathcal{D}f(\gamma_\ast)[\varphi] + g, \quad v(0) = \eta.
    \end{equation*}
    Since $A(\gamma_\ast)$ generates maximal $L^2$-regularity (Proposition~\ref{prop:MR}), this map is well-defined.
    We show that $\mathcal{K}$ is a contraction mapping if the time interval is sufficiently small.
    Let $\varphi_1, \varphi_2 \in \mathbb{E}(0,T/(1+\delta))$. Then $\psi \coloneqq \mathcal{K}(\varphi_1) - \mathcal{K}(\varphi_2)$ satisfies
    \begin{align*}
        \partial_t \psi - A(\gamma_\ast)\psi = \mathcal{D}A(\gamma_\ast)[\varphi_1 - \varphi_2]\gamma_\ast + \mathcal{D}f(\gamma_\ast)[\varphi_1 - \varphi_2], \quad \psi(0) = 0.
    \end{align*}
    Applying the maximal regularity estimate \eqref{eq:estimate_MR}, we have
    \begin{align*}
        \|\psi\|_{\mathbb{E}} \le C_{\mathrm{MR}} \left( \|\mathcal{D}A(\gamma_\ast)[\varphi_1 - \varphi_2]\gamma_\ast\|_{L^2(H^1)} + \|\mathcal{D}f(\gamma_\ast)[\varphi_1 - \varphi_2]\|_{L^2(H^1)} \right).
    \end{align*}
    We estimate the perturbation terms.
    From the explicit form of $\mathcal{D}A$, using the algebra property of $H^1$ and the embedding $H^2 \hookrightarrow C^1$, we have
    \begin{align*}
        \|\mathcal{D}A(\gamma_\ast)[\varphi]\gamma_\ast\|_{H^1}
        &= \left\| -\frac{2}{\pi} \frac{\partial_x \gamma_\ast \cdot \partial_x \varphi}{|\partial_x \gamma_\ast|^4} \partial_x^2 \gamma_\ast \right\|_{H^1} \\
        &\le C \left( \|\gamma_\ast\|_{H^3} \|\varphi\|_{C^1} + \|\gamma_\ast\|_{C^1} \|\varphi\|_{H^2} \|\gamma_\ast\|_{H^2} \right) \\
        &\le C \|\gamma_\ast\|_{H^3} \|\varphi\|_{H^2},
    \end{align*}
    where the constant $C$ depends on $\|\gamma_*\|_{C([0,T]; H^2)}$.
    Therefore, integrating over time $(0, \tau)$ with $\tau \coloneqq T/(1+\delta)$, we obtain
    \begin{align*}
        \|\mathcal{D}A(\gamma_\ast)[\varphi_1 - \varphi_2]\gamma_\ast\|_{L^2(0, \tau; H^1)}
        &\le C \left( \int_0^\tau \|\gamma_\ast(t)\|_{H^3}^2 \|\varphi_1(t) - \varphi_2(t)\|_{H^2}^2 \, \mathrm{d}t \right)^{1/2} \\
        &\le C \sup_{t \in [0,\tau]} \|\varphi_1(t) - \varphi_2(t)\|_{H^2} \left( \int_0^\tau \|\gamma_\ast(t)\|_{H^3}^2 \, \mathrm{d}t \right)^{1/2} \\
        &\le C B \|\varphi_1 - \varphi_2\|_{\mathbb{E}(0,\tau)} \|\gamma_\ast\|_{L^2(0, \tau; H^3)}.
    \end{align*}
    For the non-local term, the smoothness of $f$ implies $\|\mathcal{D}f(\gamma_\ast)[\varphi]\|_{H^1} \le C \|\varphi\|_{H^2}$. Thus,
    \begin{align*}
        \|\mathcal{D}f(\gamma_\ast)[\varphi_1 - \varphi_2]\|_{L^2(0, \tau; H^1)}
        &\le C \|\varphi_1 - \varphi_2\|_{L^2(0, \tau; H^2)} \\
        &\le C \tau^{1/2} \sup_{t \in [0,\tau]} \|\varphi_1(t) - \varphi_2(t)\|_{H^2} \\
        &\le C B \tau^{1/2} \|\varphi_1 - \varphi_2\|_{\mathbb{E}(0,\tau)}.
    \end{align*}
    Combining these estimates, we obtain
    \begin{align*}
        \|\mathcal{K}(\varphi_1) - \mathcal{K}(\varphi_2)\|_{\mathbb{E}}
        \le C_{\mathrm{MR}} C B \left( \|\gamma_\ast\|_{L^2(0, \tau; H^3)} + \tau^{1/2} \right) \|\varphi_1 - \varphi_2\|_{\mathbb{E}}.
    \end{align*}
    Since $\gamma_\ast \in \mathbb{E}(0,T) \subset L^2(0,T;H^3)$, the norm $\|\gamma_\ast\|_{L^2(0, \tau; H^3)}$ tends to $0$ as $\tau \to 0$.
    Thus, by choosing $\tau = T/(1+\delta)$ sufficiently small, the Lipschitz constant becomes less than $1/2$, proving that $\mathcal{L}$ is invertible.
    
    By the implicit function theorem on Banach spaces, there exist $\tilde{\delta} > 0$, $\tilde{\varepsilon} >0$, and a $C^\infty$-map 
    \[
        \Phi : (1- \tilde{\delta}, 1+ \tilde{\delta}) \times B_{\tilde{\varepsilon}}(\gamma_0) \to \mathbb{E}(0,T/(1+\tilde{\delta}))
    \]
    such that $H(\lambda, \eta, \Phi(\lambda, \eta)) = 0$ near $(1, \gamma_0, \gamma_*)$.
    Uniqueness implies $\Phi(\lambda, \eta) = \gamma_\lambda(\cdot; \eta)$.
    Since the map $(\lambda, \eta) \mapsto \gamma_\lambda(\cdot; \eta)$ is $C^\infty$ into $\mathbb{E}$, and $\mathbb{E} \hookrightarrow C([0,T]; H^2)$, the solution $\gamma(t)$ is smooth with respect to the time variable.
    Specifically, time derivatives of all orders exist and belong to $H^2(\mathbb{S}^1)$ for each $t$.
    
    To recover spatial regularity, we utilize a bootstrap argument based on elliptic regularity.
    Rewrite the equation as an elliptic problem for each fixed $t \in (0, T]$:
    \begin{equation} \label{eq:elliptic}
        \partial_x^2 \gamma = \pi |\partial_x \gamma|^2 \partial_t \gamma - \pi |\partial_x \gamma|^2 f(\gamma).
    \end{equation}
    Since $\gamma(t) \in H^2$ and $\partial_t \gamma(t) \in H^2$, the right-hand side belongs to $H^1$ (recall that $H^1$ is a Banach algebra and $f$ maps $H^2$ to $H^1$).
    Standard elliptic regularity theory then implies $\gamma(t) \in H^3$.
    Proceeding inductively, suppose $\gamma(t) \in H^k$ for some $k \ge 3$. Then the right-hand side is in $H^{k-1}$ (using the smoothness of $f$ and algebra properties), which implies $\gamma(t) \in H^{k+1}$.
    Since we have time derivatives of all orders, we can differentiate \eqref{eq:elliptic} with respect to $t$ to handle the time-derivative terms in higher-order spatial estimates (mixed derivatives).
    By induction, $\gamma(t) \in C^\infty(\mathbb{S}^1)$ for all $t > 0$.
    Thus, $\gamma \in C^\infty(\mathbb{S}^1 \times (0, T))$.
\end{proof}

\subsection{Equivalence with the ILLSS model}
\label{subsec:equivalence}

Having established the existence of a smooth solution to the boundary integral equation, we are now in a position to rigorously prove the equivalence between the boundary integral formulation and the original ILLSS model.
The logic proceeds as follows: the smooth solution of the boundary integral equation allows us to construct a set of bulk and surface fields.
We verify that these fields satisfy the regularity and decay requirements of Definition~\ref{def:classical}.
Consequently, by the uniqueness result established in Theorem~\ref{thm:uniqueness}, these fields constitute the unique classical solution to the ILLSS model.

\begin{theorem}
    \label{thm:existence_ILLSS}
    Let $\gamma \in C^\infty(\mathbb{S}^1 \times (0, T))$ be the solution to the boundary integral equation \eqref{eq:BIE} obtained in Theorems~\ref{thm:existence} and \ref{thm:regularity}.
    Define the surface velocity $u$ on $\partial B$ by the integral representation \eqref{eq:velocity_field_conv} derived in Section~\ref{sec:ILLSS-to-BIE}.
    Define the surface pressure $p$ via the relation $\nabla p = -\Lambda_{\mathrm{DN}}u$ in $\partial B \setminus \Gamma$.
    Construct the bulk velocity $v$ and pressure $q$ in $B$ via the Poisson integral as in Proposition~\ref{prop:Laplace_solves_Stokes} (with $q \equiv 0$).
    Then, the quadruplet $(v, q, u, p)$ constitutes the unique classical solution to the ILLSS model \eqref{eq:Stokes_subfluid}--\eqref{eq:jump_condition} in the sense of Definition~\ref{def:classical}.
\end{theorem}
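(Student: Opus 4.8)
The plan is to build the quadruplet $(v,q,u,p)$ explicitly from $\gamma$, verify the four requirements (i)--(iv) of Definition~\ref{def:classical} one by one, and then close the argument with Theorem~\ref{thm:uniqueness}. Fix $t\in(0,T)$; since $\gamma\in C^\infty(\mathbb{S}^1\times(0,T))$ and $\gamma(t)\in V$, the curve $\Gamma(t)=\gamma(\mathbb{S}^1,t)$ is a smooth Jordan curve with $|\gamma(t)|_\ast>0$, so the entire derivation of Section~\ref{sec:ILLSS-to-BIE} applies. Recall also that $\gamma$ solves the boundary integral equation \eqref{eq:BIE}: starting from the smooth solution of the DeTurck-modified problem \eqref{eq:Langmuir_rewrite} furnished by Theorems~\ref{thm:existence}--\ref{thm:regularity}, the reparametrization ODE \eqref{eq:change} admits a smooth family of diffeomorphisms $\varphi(\cdot,t)$ of $\mathbb{S}^1$, and the composition $\tilde\gamma\circ\varphi$ solves \eqref{eq:BIE} with the same $C^\infty$ regularity.

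First I would handle the surface fields. Define $u$ by \eqref{eq:velocity_field_conv}. Since $\widehat E(k)=|k|^{-1}I-|k|^{-3}k\otimes k$ annihilates the direction $k$ (Lemma~\ref{lem:fund_sol}), one has $\nabla\cdot u=0$ in $\mathcal{D}'(\mathbb{R}^2)$, hence pointwise on $\partial B\setminus\Gamma$; Proposition~\ref{prop:velocity-limit} shows the interior and exterior normal limits of $u$ at $\Gamma$ coincide, giving simultaneously $\llbracket u\cdot\nu\rrbracket=0$ and, in fact, $\llbracket u\rrbracket=0$. For $C^1$-regularity up to $\Gamma$ from each side I would invoke standard layer-potential estimates: by the $C^\infty$ geometry the kernel $E(x-y)\nu(y)$ is only weakly singular — bounded, indeed, by the computation in Step~1 of the proof of Proposition~\ref{prop:velocity-limit} — so with a $C^\infty$ density $\kappa$ on a $C^\infty$ curve the potential \eqref{eq:velocity_field_conv} extends to $C^\infty(\overline{\Omega_-})\cap C^\infty(\overline{\Omega_+})$, in particular $u\in C^1(\overline{\Omega_\pm})\cap C^0(\partial B)$. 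Define the surface pressure either by $\widehat p(k)=\widehat f(k)\cdot k/(\mathrm{i}|k|^2)$ with $f=-\kappa\nu\delta_\Gamma$ (cf.\ \eqref{eq:Fourier-transform_Langmuir-DtN}), or equivalently as the classical double-layer potential $p(x)=-\frac{1}{2\pi}\int_\Gamma\frac{(x-y)\cdot\nu(y)}{|x-y|^2}\kappa(y)\,\mathrm{d}s(y)$; this $p$ satisfies $\nabla p=-\Lambda_{\mathrm{DN}}u$ in $\partial B\setminus\Gamma$ by \eqref{eq:Fourier-transform_Langmuir-DtN}, lies in $C^1(\overline{\Omega_\pm})$ by double-layer regularity, and has jump $\llbracket p\rrbracket=\kappa$ (either from the classical jump relation or by matching the distributional identity $\nabla p=\{\nabla p\}-\llbracket p\rrbracket\nu\delta_\Gamma$ against Lemma~\ref{lem:distributional}, noting $\Lambda_{\mathrm{DN}}u\in H^{-1/2}(\partial B)$ carries no $\delta_\Gamma$). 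This establishes \eqref{eq:velocity_surface} and the first two conditions of \eqref{eq:jump_condition}.

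Next I would build the bulk fields: let $v$ be the harmonic (Poisson) extension of $\widetilde u=(u_1,u_2,0)^\top$ to $B$ and set $q\equiv0$. By Proposition~\ref{prop:Laplace_solves_Stokes} (whose hypotheses $\llbracket u\cdot\nu\rrbracket=0$ and $|u|=\mathcal{O}(|x'|^{-2})$ are now verified), $v$ is solenoidal and harmonic, so \eqref{eq:Stokes_subfluid} holds with $q\equiv0$; interior smoothness $v\in C^\infty(B)$ is automatic, and boundary regularity for the half-space Dirichlet problem with $C^\infty$ data on $\partial B\setminus\Gamma$ yields $v\in C^1(\overline B\setminus\Gamma)$, together with $v_3=0$, $v_\parallel=u$ on $\partial B\setminus\Gamma$ (continuity of $\widetilde u$ there), i.e.\ \eqref{eq:Langmuir_incompressibility}, and $\Lambda_{\mathrm{DN}}u=\partial_{x_3}v_\parallel|_{\partial B}$ by definition, closing \eqref{eq:velocity_surface}. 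The kinematic condition $U=u\cdot\nu$ in \eqref{eq:jump_condition} is exactly the content of \eqref{eq:BIE}, since by Proposition~\ref{prop:velocity-limit} and \eqref{eq:normal_velocity_explicit} the right-hand side of \eqref{eq:BIE} equals $(\lim_{\ell\to0}u(\gamma+\ell\nu))\cdot\nu$. For the decay: $|u|=\mathcal{O}(|x|^{-2})$ is \eqref{eq:u_decay}, the same multipole argument ($\int_\Gamma\kappa\nu\,\mathrm{d}s=0$) gives $|p|=\mathcal{O}(|x|^{-2})$, and $q\equiv0$ makes its decay trivial; for $v$ one uses that the leading $|x'|^{-2}$ term of $\widetilde u$ is a dipole with vanishing angular mean, upgrading the Poisson-integral bound to $|v|=\mathcal{O}(|x|^{-2})$, with $|\nabla v|=\mathcal{O}(|x|^{-3})$ from interior gradient estimates. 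For the finite-energy condition, $\int_B|\nabla v|^2\,\mathrm{d}x=c\|\widetilde u\|_{\dot H^{1/2}(\partial B)}^2=c\|u\|_{\dot H^{1/2}(\mathbb{R}^2)}^2<\infty$, because $u$ is $C^\infty$ off $\Gamma$, \emph{continuous} across $\Gamma$, and decays like $\mathcal{O}(|x|^{-2})$ with gradient $\mathcal{O}(|x|^{-3})$, so its Gagliardo $\dot H^{1/2}$-seminorm converges (alternatively, $\|u\|_{\dot H^{1/2}}^2=-(2\pi)^2\,\tfrac{\mathrm{d}}{\mathrm{d}t}\mathcal{L}(t)<\infty$ by Proposition~\ref{prop:CS}), and $\int_B|\nabla q|^2\,\mathrm{d}x=0$. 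Since $g=\kappa$ also satisfies $\int_\Gamma\kappa\nu\,\mathrm{d}s=0$, all hypotheses of Definition~\ref{def:classical} hold: $(v,q,u,p)$ is a classical solution of the elliptic system with $g=\kappa(\cdot,t)$ on $\Gamma(t)$ for every $t$, and with the kinematic law it is a classical solution of the ILLSS model; by Theorem~\ref{thm:uniqueness} it is the only one for the given curve, and uniqueness of the curve evolution (Theorem~\ref{thm:existence}, via the DeTurck equivalence) makes the whole quadruplet unique.

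\textbf{Main obstacle.} The genuinely technical points are the boundary-behaviour claims: (a) that the surface potentials $u$ and $p$ extend to $C^1$ (in fact $C^\infty$) on each of $\overline{\Omega_-},\overline{\Omega_+}$ up to $\Gamma$ — classical single-/double-layer regularity on a smooth curve, but requiring the geometric-cancellation estimate behind Lemma~\ref{lem:ratio_bound} and Proposition~\ref{prop:velocity-limit} to be pushed to first derivatives; and (b) the decay $|v|=\mathcal{O}(|x|^{-2})$, $|\nabla v|=\mathcal{O}(|x|^{-3})$ of the Poisson extension, which fails for a generic $\mathcal{O}(|x'|^{-2})$ datum and genuinely uses the dipole structure of $u$ (vanishing monopole, $\int_\Gamma\kappa\nu\,\mathrm{d}s=0$), together with the finiteness of the Dirichlet integral near $\Gamma$, which hinges on $u$ having \emph{no jump in value} across $\Gamma$ (Proposition~\ref{prop:velocity-limit}), not merely being bounded there. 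Everything else is bookkeeping against Definition~\ref{def:classical} followed by an application of Theorem~\ref{thm:uniqueness}.
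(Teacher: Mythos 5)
Your proposal is correct and follows essentially the same route as the paper: construct $u$ as the single-layer potential, $p$ via the momentum balance, $v$ as the Poisson extension with $q\equiv 0$, verify the regularity, decay, and finite-energy requirements of Definition~\ref{def:classical} (using the vanishing monopole $\int_\Gamma\kappa\nu\,\mathrm{d}s=0$ for the $\mathcal{O}(|x|^{-2})$ decay), and conclude with Theorem~\ref{thm:uniqueness}. Your treatment is in fact slightly more explicit than the paper's at a few points (the Fourier-symbol verification of $\nabla\cdot u=0$, the double-layer formula for $p$, and the $\dot H^{1/2}$ identification of the Dirichlet integral), but these are elaborations of the same argument rather than a different approach.
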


\begin{proof}
    \textit{Step 1: Construction and regularity.}
    From Theorem~\ref{thm:regularity}, we have established that the interface evolution $\gamma(t)$ is $C^\infty$ for $t > 0$.
    Consequently, the curve $\Gamma(t)$ is smooth, and the geometric quantities such as the curvature $\kappa$ and the unit normal $\nu$ are all smooth ($C^\infty$) functions on $\Gamma$.
    
    The surface velocity $u$ is defined by the convolution of the fundamental solution $E$ with the force density $-\kappa\nu$ supported on $\Gamma$:
    \[
        u(x) = -\int_\Gamma E(x-y) \kappa(y)\nu(y) \,\mathrm{d}s(y), \quad x \in \partial B.
    \]
    This integral representation corresponds to a hydrodynamic \textit{single-layer potential}.
    According to the classical theory of hydrodynamic potentials (see, e.g., Ladyzhenskaya~\cite[Chapter~3, Section~2]{ladyzhenskaya1969mathematical}), a single-layer potential with a continuous density is continuous throughout the entire space.
    This continuity property is fully consistent with the jump condition $\llbracket u \rrbracket = 0$ derived in Proposition~\ref{prop:velocity-limit}.
    Thus, the condition $\llbracket u \cdot \nu \rrbracket = 0$ in Definition~\ref{def:classical}~(ii) is satisfied.

    Regarding higher regularity, since the density $-\kappa\nu$ and the curve $\Gamma$ are of class $C^\infty$, standard results in potential theory (regularity of potentials up to the boundary) ensure that $u$ admits smooth extensions to the boundary from each side.
    Specifically, all derivatives of $u$ possess well-defined limiting values as one approaches $\Gamma$ from $\Omega_\pm$, although they may exhibit jumps across the interface.
    Consequently, $u$ belongs to the class
    \begin{align*}
        u \in C^\infty(\overline{\Omega_-}; \mathbb{R}^2) \cap C^\infty(\overline{\Omega_+}; \mathbb{R}^2).
    \end{align*}
    This implies $u \in C^1(\overline{\Omega_\pm})$, satisfying Definition~\ref{def:classical}~(i).

    The surface pressure $p$ is determined by the balance of tangential stress derived in the model derivation: $\nabla p = -\partial_{x_3} v_\parallel|_{\partial B}$ in the domains $\Omega_\pm$, where $v_\parallel$ denotes the tangential component of the bulk velocity (constructed below).
    As shown below, since the bulk velocity $v$ is smooth up to the boundary, its tangential derivative defining $p$ is also smooth.
    Thus, the pressure $p$ satisfies:
    \begin{align*}
        p \in C^\infty(\overline{\Omega_-}) \cap C^\infty(\overline{\Omega_+}).
    \end{align*}
    The jump condition $\llbracket p \rrbracket = \kappa$ holds by construction (as shown in Section~\ref{sec:ILLSS-to-BIE}).

    The bulk velocity $v$ is constructed as the harmonic extension of $u$ into the lower half-space $B$, and we set the bulk pressure $q \equiv 0$.
    According to Proposition~\ref{prop:Laplace_solves_Stokes}, since the boundary data $u$ is continuous and divergence-free on the surface (as $\llbracket u \cdot \nu \rrbracket = 0$ is satisfied and $\nabla \cdot u = 0$ holds in $\Omega_\pm$), the extension $v$ satisfies $\nabla \cdot v = 0$ in $B$.
    Therefore, the pair $(v, q) = (v, 0)$ satisfies the Stokes equations $-\triangle v + \nabla q = 0$ exactly.
    
    We now verify the regularity of the bulk velocity $v$.
    Inside the domain $B$, $v$ is real analytic.
    Moreover, since the boundary data $u$ is smooth ($C^\infty$) on $\partial B \setminus \Gamma$, standard elliptic regularity theory for the Laplace equation ensures that $v$ is smooth up to the boundary away from the interface:
    \[
        v \in C^\infty(\overline{B} \setminus \Gamma; \mathbb{R}^3) \cap C^0(\overline{B}; \mathbb{R}^3).
    \]
    This satisfies the regularity requirements for the bulk fields.

    \textit{Step 2: Decay and integrability conditions.}
    We verify the decay and finite energy conditions.
    As discussed in Section~\ref{sec:ILLSS-to-BIE}, the closed nature of the interface $\Gamma$ implies the zero net force condition: $\int_\Gamma \kappa \nu \,\mathrm{d}s = 0$.
    This cancellation of the monopole moment implies that the surface velocity $u(x)$ decays as a dipole field at infinity:
    \[
        |u(x)| = \mathcal{O}(|x|^{-2}) \quad \text{as } |x| \to \infty \text{ on } \partial B.
    \]
    Since $v$ is the harmonic extension of $u$, it inherits this decay rate in the bulk:
    \[
        |v(x)| = \mathcal{O}(|x|^{-2}) \quad \text{as } |x| \to \infty \text{ in } B.
    \]
    The bulk pressure $q \equiv 0$ trivially satisfies the decay condition.
    
    Furthermore, the decay of $v$ implies $|\nabla v(x)| = \mathcal{O}(|x|^{-3})$ at infinity, which ensures square integrability at infinity.
    Regarding the local behavior near the interface, since $u \in H^{1/2}(\partial B)$ (which follows from its smoothness and decay), its harmonic extension $v$ belongs to the energy space with finite Dirichlet integral.
    More specifically, since $v$ is smooth up to $\Gamma$ from both sides, $\nabla v$ is bounded in the neighborhood of $\Gamma$ except possibly for jump discontinuities across the interface extension. Thus:
    \[
        \int_{B} |\nabla v|^2 \,\mathrm{d}x < \infty.
    \]
    Thus, the finite energy condition (Definition~\ref{def:classical}~(iii)) is satisfied.

    \textit{Step 3: Uniqueness.}
    The constructed quadruplet $(v, q, u, p) = (v, 0, u, p)$ satisfies all the conditions of Definition~\ref{def:classical}.
    By Theorem~\ref{thm:uniqueness}, the classical solution to the problem is unique.
    Therefore, the fields constructed from the solution of the boundary integral equation are the unique solution to the ILLSS model.
\end{proof}



\section{Numerical experiments}
\label{sec:numerics}

In this section, we present the results of several numerical experiments.
Throughout, we use the standard (unscaled) curvature notation.
Specifically, we consider a numerical scheme for the following system:
\begin{subnumcases}{}
    \partial_t \gamma = \left(-\frac{1}{\pi} \kappa(\gamma) + F(\gamma,\kappa(\gamma))\right) \nu(\gamma)
    & in $\mathbb{S}^1 \times (0,T)$,\label{eq:Langmuir_original}\\
    \partial_s^2 \gamma = -\kappa(\gamma) \nu(\gamma)
    & in $\mathbb{S}^1 \times (0,T)$,\label{eq:Frenet}\\
    \gamma(\cdot,0) = \gamma_0(\cdot)
    & on $\mathbb{S}^1$.\notag
\end{subnumcases}
Here, the nonlocal term $F(\gamma,\kappa(\gamma))$ is defined by
\begin{equation*}
    F(\gamma,\kappa(\gamma))(x) = -\frac{1}{2\pi} \int_0^1 K(\gamma)(x, y) \kappa(\gamma)(y) |\partial_x \gamma(y)|\, \mathrm{d}y.
\end{equation*}

\subsection{Numerical scheme}

In this subsection, we develop a numerical scheme for the boundary integral equation~\eqref{eq:BIE} using the parametric finite element method~\cite[Chapter~4]{bonito2020geometric}.
For completeness, we provide a detailed derivation of the scheme.

\subsubsection{Weak formulation}

Multiplying both sides of \eqref{eq:Langmuir_original} by $\nu(\gamma)\chi$ with $\chi\in L^2(\mathrm{d}s^\gamma)$ and integrating with respect to $\mathrm{d}s^\gamma$, we obtain
\begin{equation}
    \int_0^1\partial_t \gamma \cdot \nu(\gamma)\chi\,\mathrm{d}s^\gamma = -\frac{1}{\pi} \int_0^1\kappa(\gamma)\chi\,\mathrm{d}s^\gamma + \int_0^1F(\gamma,\kappa(\gamma))\chi\,\mathrm{d}s^\gamma.
    \label{eq:Langmuir_normal-velocity}
\end{equation}
Multiplying both sides of the Frenet formula~\eqref{eq:Frenet} by $\eta \in H^1(\mathrm{d}s^\gamma)^2$ and integrating by parts, we obtain
\begin{equation}
    \int_0^1 \partial_s \gamma \cdot \partial_s \eta\, \mathrm{d}s^\gamma
    = \int_0^1 \kappa(\gamma) \nu(\gamma) \cdot \eta\, \mathrm{d}s^\gamma,
    \label{eq:Frenet_weak}
\end{equation}
where $\mathrm{d}s^\gamma$ denotes the line element along the curve $\gamma$.

Combining \eqref{eq:Langmuir_normal-velocity} with \eqref{eq:Frenet_weak}, we arrive at the following weak formulation:
\begin{equation}
    \begin{dcases*}
        (\partial_t \gamma, \nu(\gamma) \chi)_{\gamma} + \frac{1}{\pi}(\kappa(\gamma), \chi)_{\gamma} - (F(\gamma,\kappa(\gamma)), \chi)_{\gamma} = 0 & for all $\chi \in L^2(\mathrm{d}s^\gamma)$, \\
        (\kappa(\gamma) \nu(\gamma), \eta)_{\gamma} - (\partial_s \gamma, \partial_s \eta)_{\gamma} = 0 & for all $\eta \in H^1(\mathrm{d}s^\gamma)^2$,
    \end{dcases*}
    \label{eq:Langmuir-weak}
\end{equation}
where $(\cdot, \cdot)_{\gamma} \colon L^2(\mathrm{d}s^\gamma) \times L^2(\mathrm{d}s^\gamma) \to \mathbb{R}$ denotes the $L^2$ inner product along the curve $\gamma$ with respect to the arclength measure:
\begin{equation*}
    (\mathsf{f}, \mathsf{g})_{\gamma}
    \coloneqq \int_0^1 \mathsf{f} \cdot \mathsf{g}\, \mathrm{d}s^\gamma
    = \int_0^1 \mathsf{f}(x) \cdot \mathsf{g}(x)\, |\partial_x \gamma(x)|\, \mathrm{d}x,
    \qquad \mathsf{f}, \mathsf{g} \in L^2(\mathrm{d}s^\gamma).
\end{equation*}

\subsubsection{Temporal discretization}

Let $0 = t_0 < t_1 < \cdots < t_{M-1} < t_M = T$ be a time mesh, and define $\tau_m \coloneqq t_{m+1} - t_m$ for $m = 0, 1, \ldots, M-1$.
We denote the curve at time $t = t_m$ by $\gamma^m$.
Furthermore, we abbreviate $\mathsf{g}(\gamma^m)$ as $\mathsf{g}^m$.

We discretize the weak formulation \eqref{eq:Langmuir-weak} in time using a semi-implicit scheme, resulting in the following system:
\begin{equation}
    \begin{dcases}
        \left( \frac{\gamma^{m+1} - \gamma^m}{\tau_m}, \nu^m \chi \right)_{\gamma^m}
        + \frac{1}{\pi} \left( \kappa^{m+1}, \chi \right)_{\gamma^m}
        - \left( F(\gamma^m,\kappa^{m+1}), \chi \right)_{\gamma^m} = 0
        & \text{for all } \chi \in L^2(\mathrm{d}s^{\gamma^m}), \\
        \left( \kappa^{m+1} \nu^m, \eta \right)_{\gamma^m}
        - \left( \partial_{s,\gamma^m} \gamma^{m+1}, \partial_{s,\gamma^m} \eta \right)_{\gamma^m}
        = 0
        & \text{for all } \eta \in H^1(\mathrm{d}s^{\gamma^m})^2,
    \end{dcases}
    \label{eq:Langmuir-weak_time-discrete}
\end{equation}
where $\partial_{s,\gamma^m}$ denotes the arclength derivative with respect to the curve $\gamma^m$. Note that this scheme leads to a linear system for the unknowns $(\gamma^{m+1}, \kappa^{m+1})$.

\subsubsection{Spatial discretization}

Let $0 < x_1 < x_2 < \cdots < x_N \le 1$ be a spatial mesh.
Define the space of piecewise linear functions by
\begin{equation*}
    \mathbb{P}_1^h \coloneqq \left\{ \chi \in C(\mathbb{S}^1) \relmiddle| \text{$\chi|_{[x_{j-1}, x_j]}$ is affine for each } j = 1, 2, \ldots, N \right\},
\end{equation*}
where we set $x_0\coloneqq x_N-1$.
We approximate $\gamma^m$ and $\kappa^{m+1}$ by $\gamma_h^m \in (\mathbb{P}_1^h)^2$ and $\kappa_h^{m+1} \in \mathbb{P}_1^h$, respectively.
Then the unit outward normal vector $\nu_h^m = \nu(\gamma_h^m)$ can be explicitly computed as
\begin{equation*}
    \nu_h^m = \nu_j^m = \mathcal{R} \frac{\gamma_h^m(x_j) - \gamma_h^m(x_{j-1})}{|\gamma_h^m(x_j) - \gamma_h^m(x_{j-1})|} \qquad \text{on } (x_{j-1}, x_j)
\end{equation*}
for each $j$.

For any functions $\mathsf{f}, \mathsf{g} \colon \mathbb{S}^1 \to \mathbb{R}^2$ such that the one-sided limits $\lim_{x \searrow x_{j-1}} \mathsf{f}(x)$ and $\lim_{x \nearrow x_j} \mathsf{f}(x)$ exist for all $j$, we define a mass-lumped inner product
\begin{equation*}
    (\mathsf{f}, \mathsf{g})_m^h \coloneqq \frac{1}{2} \sum_{j=1}^N \left( (\mathsf{f} \cdot \mathsf{g})(x_{j-1}^+) + (\mathsf{f} \cdot \mathsf{g})(x_j^-) \right) r_j^m,
\end{equation*}
where $r_j^m \coloneqq |\gamma_h^m(x_j) - \gamma_h^m(x_{j-1})|$ denotes the length of the $j$-th edge $\gamma_h^m([x_{j-1}, x_j])$, and
\begin{equation*}
    \mathsf{f}(x_{j-1}^+) \coloneqq \lim_{h \searrow 0} \mathsf{f}(x_{j-1} + h), \qquad
    \mathsf{f}(x_j^-) \coloneqq \lim_{h \searrow 0} \mathsf{f}(x_j - h).
\end{equation*}
Then, we approximate $F(\gamma_h^m,\kappa_h^{m+1})$ using the mass-lumped inner product as
\begin{align*}
    F(\gamma_h^m,\kappa_h^{m+1})(x_j^\pm)
    &=-\frac{1}{2\pi}\left(K(\gamma_h^m)(x_j^\pm,\cdot),\kappa_h^{m+1}\right)_{\gamma_h^m} \\
    &\approx-\frac{1}{2\pi}\left(K(\gamma_h^m)(x_j^\pm,\cdot),\kappa_h^{m+1}\right)_m^h
    \eqqcolon F_h^m(\kappa_h^{m+1})(x_j^\pm).
\end{align*}
Consequently, we can approximate the term $(F(\gamma_h^m,\kappa_h^{m+1}),\chi_h)_{\gamma_h^m}$ by $(F_h^m(\kappa_h^{m+1}),\chi_h)_m^h$ for $\chi_h\in \mathbb{P}_h^1$.

Under the above settings, we obtain the following spatial discretization of \eqref{eq:Langmuir-weak_time-discrete}:
\begin{subnumcases}{}
    \left( \frac{\gamma^{m+1} - \gamma^m}{\tau_m}, \nu_h^m \chi_h \right)_m^h + \frac{1}{\pi} \left( \kappa_h^{m+1}, \chi_h \right)_m^h - \left( F_h^m(\kappa_h^{m+1}), \chi_h \right)_m^h = 0 & for all $\chi_h \in \mathbb{P}_1^h$,\label{eq:Langmuir-weak_space-time-discrete_1st}\\
    \left( \kappa_h^{m+1} \nu_h^m, \eta_h \right)_m^h - \left( \partial_{s, \gamma_h^m} \gamma_h^{m+1}, \partial_{s, \gamma_h^m} \eta_h \right)_{L^2(\mathrm{d}s^{\gamma_h^m})} = 0 & for all $\eta_h \in (\mathbb{P}_1^h)^2$.\label{eq:Langmuir-weak_space-time-discrete_2nd}
\end{subnumcases}

\subsubsection{Derivation of the linear system}

Let $\{ \phi_k \}_{k=1}^N$ be the canonical basis of $\mathbb{P}_1^h$, that is, $\phi_k \in \mathbb{P}_1^h$ satisfies $\phi_k(x_l) = \delta_{kl}$, where $\delta_{kl}$ denotes the Kronecker delta.
Substituting $\phi_j$ into $\chi_h$ in \eqref{eq:Langmuir-weak_space-time-discrete_1st} and $\eta_j\phi_j$ with $\eta_j\in\mathbb{R}^2$ into $\eta_h$ in \eqref{eq:Langmuir-weak_space-time-discrete_2nd}, we obtain the following linear system:
\begin{align*}
    \begin{pmatrix}
        \dfrac{\tau_m}{\pi}\mathbf{M} + \dfrac{\tau_m}{2\pi}\mathbf{L} & \mathbf{N}^\top\\[1ex]
        \mathbf{N}&-\mathbf{A}
    \end{pmatrix}\begin{pmatrix}
        \bm{\kappa}^{m+1}\\
        \bm{\gamma}^{m+1}
    \end{pmatrix} = \begin{pmatrix}
        \mathbf{N}^\top\bm\gamma^m\\
        \bm{0}
    \end{pmatrix},
\end{align*}
where the matrices $\mathbf{L} \in \mathbb{R}^{N\times N}$, $\mathbf{M} \in \mathbb{R}^{N\times N}$, $\mathbf{N} \in (\mathbb{R}^2)^{N\times N}$, and $\mathbf{A} \in(\mathbb{R}^{2\times 2})^{N\times N}$ are defined as follows:
\begin{align*}
    L_{jk} &\coloneqq \begin{dcases*}
        \frac{\left(\mathcal{R}\left(\frac{\gamma_{j+1}^m-\gamma_{j-1}^m}{2}\right)\cdot(\gamma_k^m-\gamma_j^m)\right)\left(\mathcal{R}\left(\frac{\gamma_{k+1}^m-\gamma_{k-1}^m}{2}\right)\cdot(\gamma_k^m-\gamma_j^m)\right)}{|\gamma_k^m-\gamma_j^m|^3} & if $j\neq k$,\\
        0 & if $j=k$,
    \end{dcases*}\\
    M_{jk} &\coloneqq \begin{dcases*}
        \frac{r_{j+1}^m+r_j^m}{2} & if $j=k$,\\
        0 & if $j\neq k$,
    \end{dcases*}\\
    N_{jk} &\coloneqq \begin{dcases*}
        \mathcal{R}\left(\frac{\gamma_{j+1}^m - \gamma_{j-1}^m}{2}\right) & if $j=k$,\\
        0 & if $j\neq k$,
    \end{dcases*}\\
    A_{jk} &\coloneqq \begin{dcases*}
        -\frac{1}{r_j^m}I_2 & if $j-k\equiv1\pmod N$,\\
        \left(\frac{1}{r_j^m}+\frac{1}{r_{j+1}^m}\right)I_2 & if $j=k$,\\
        -\frac{1}{r_{j+1}^m}I_2 & if $j-k \equiv -1 \pmod N$,
    \end{dcases*}
\end{align*}
with $\bm{\gamma}^m = (\gamma_j^m)_{j=1}^N \in (\mathbb{R}^2)^N$ and $\bm{\kappa}^m = (\kappa_j^m)_{j=1}^N\in\mathbb{R}^N$.
Here, we set $r_{N+1}^m \coloneqq r_1^m$, and $I_2$ denotes the identity matrix of size $2$.

\subsection{Results}

In this subsection, we present several numerical experiments to investigate the dynamics of the Langmuir model.
Throughout all computations, the parameters are set as follows:
\begin{equation*}
    N=200,\qquad
    h=\frac{1}{N},\qquad
    \tau_m\equiv\tau=10^{-2}.
\end{equation*}
The spatial mesh is defined by $\{x_j\}_{j=1}^N$ with $x_j=(j-1/2)h$ for $j=1,\ldots,N$.

We consider an initial curve defined as
\begin{equation}
    \gamma(u)
    =\begin{pmatrix}
        \gamma_1(u)\\
        \gamma_2(u)
    \end{pmatrix}
    =\begin{dcases*}
        \begin{pmatrix}
            \overline{\gamma}_1(u)\\
            \overline{\gamma}_2(u)
        \end{pmatrix} & for $u\in[0,1/2)$,\\
        \begin{pmatrix}
            -\overline{\gamma}_1(u-1/2)\\
            -\overline{\gamma}_2(u-1/2)
        \end{pmatrix} & for $u\in[1/2,1]$,
    \end{dcases*}
    \label{eq:bola}
\end{equation}
where
\begin{align*}
    \begin{pmatrix}
        \overline{\gamma}_1(u)\\
        \overline{\gamma}_2(u)
    \end{pmatrix}
    &=\begin{dcases*}
        \begin{pmatrix}
            \hat{\gamma}_1(u)\\
            \hat{\gamma}_2(u)
        \end{pmatrix} & for $u\in[0,1/4)$,\\
        \begin{pmatrix}
            -\hat{\gamma}_1(1/2-u)\\
            \hat{\gamma}_2(1/2-u)
        \end{pmatrix} & for $u\in[1/4,1/2]$,
    \end{dcases*}\\
    \hat{\gamma}_1(u)&=\begin{dcases*}
        5+\rho(1+\cos(8(\pi-\theta_\varepsilon)u)) & for $u\in[0,1/8)$,\\
        2\left(5+\rho(1-\cos\theta_\varepsilon)\right)(1-4u) & for $u\in[1/8,1/4]$,
    \end{dcases*}\\
    \hat{\gamma}_2(u)&=\begin{dcases*}
        \rho\sin(8(\pi-\theta_\varepsilon)u) & for $u\in[0,1/8)$,\\
        \varepsilon & for $u\in[1/8,1/4]$,
    \end{dcases*}
\end{align*}
with $\varepsilon=1/5$, $\rho=2/5$, and $\theta_\varepsilon\coloneqq\arcsin(\varepsilon/\rho)$.
This curve has a ``bola'' shape, as studied in the original work~\cite{alexander2007domain} (see Figure~\ref{fig:bola}).

\begin{figure}[htbp]
    \begin{minipage}{.33\hsize}
        \centering
        \includegraphics[width=\hsize]{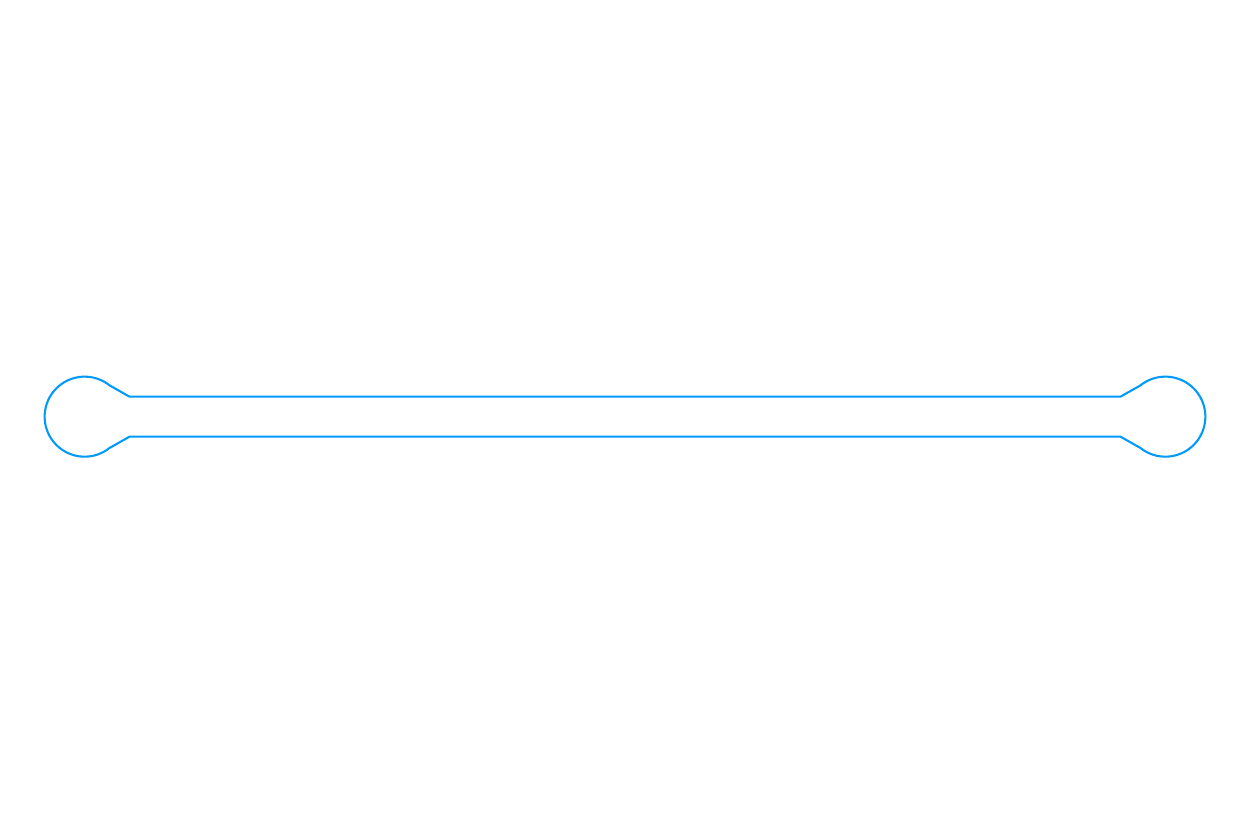}
        $t = 0$
    \end{minipage}%
    \begin{minipage}{.33\hsize}
        \centering
        \includegraphics[width=\hsize]{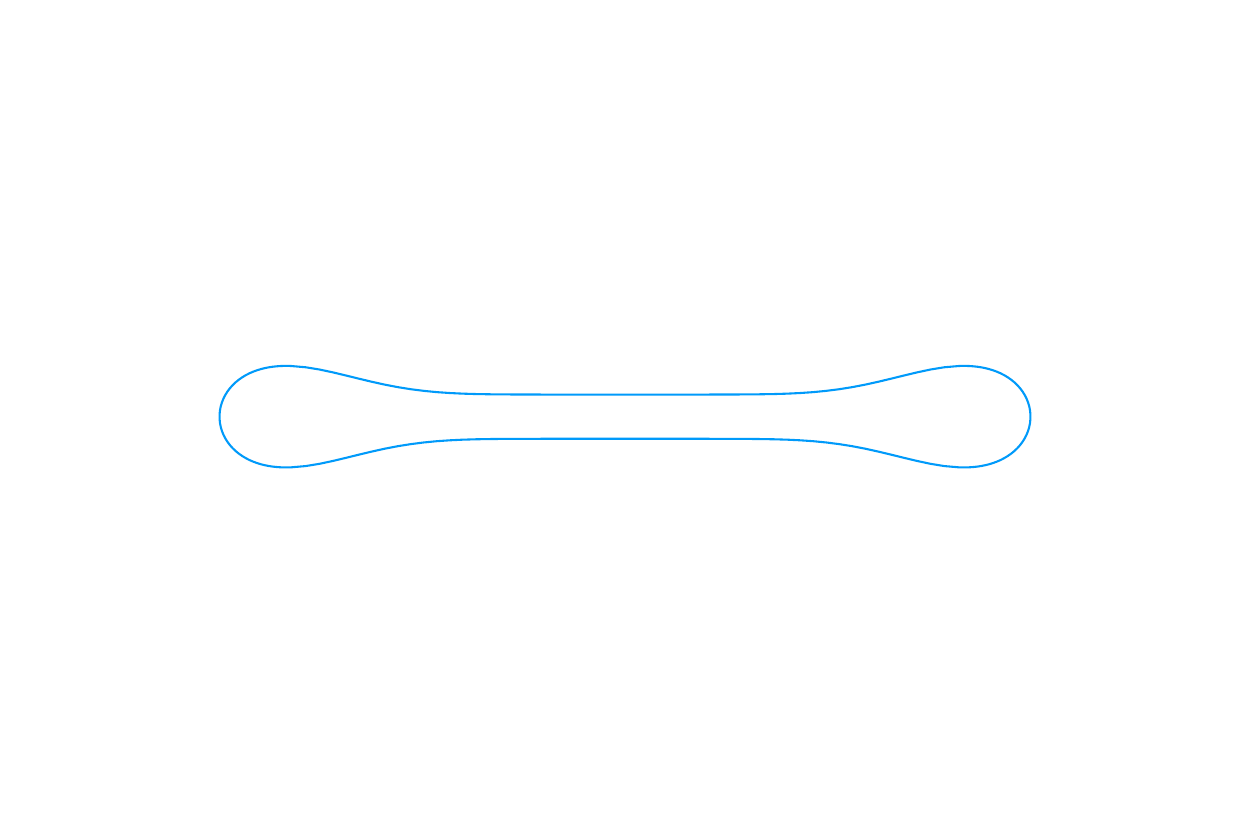}
        $t = 300\tau$
    \end{minipage}%
    \begin{minipage}{.33\hsize}
        \centering
        \includegraphics[width=\hsize]{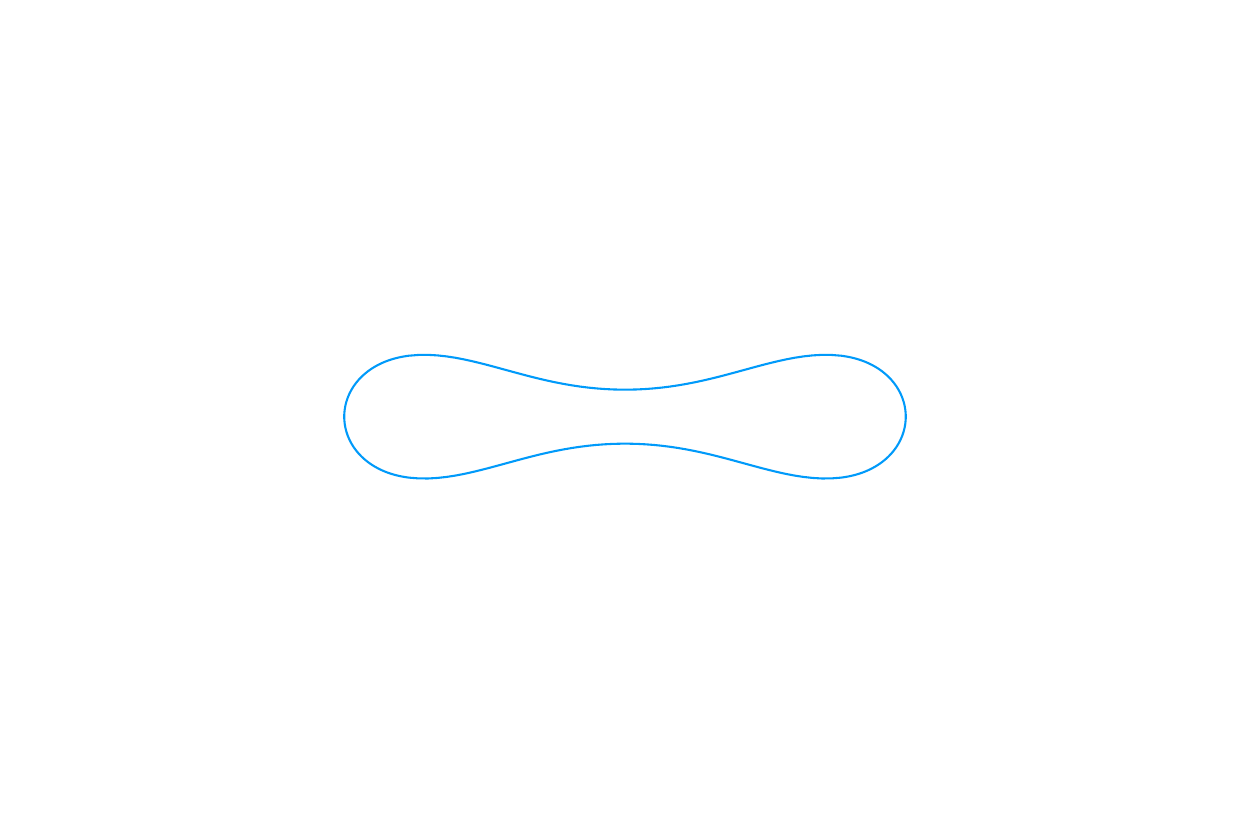}
        $t = 600\tau$
    \end{minipage}%

    \begin{minipage}{.33\hsize}
        \centering
        \includegraphics[width=\hsize]{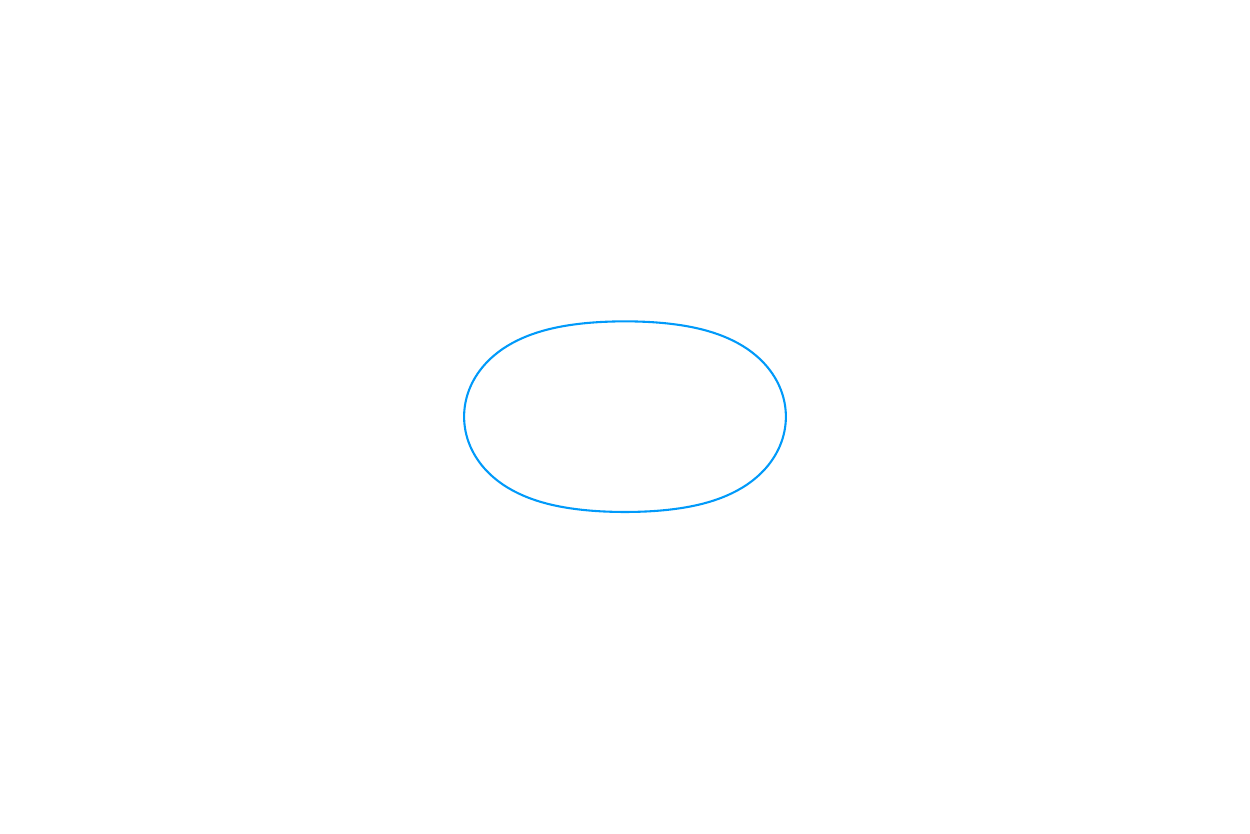}
        $t = 1000\tau$
    \end{minipage}%
    \begin{minipage}{.33\hsize}
        \centering
        \includegraphics[width=\hsize]{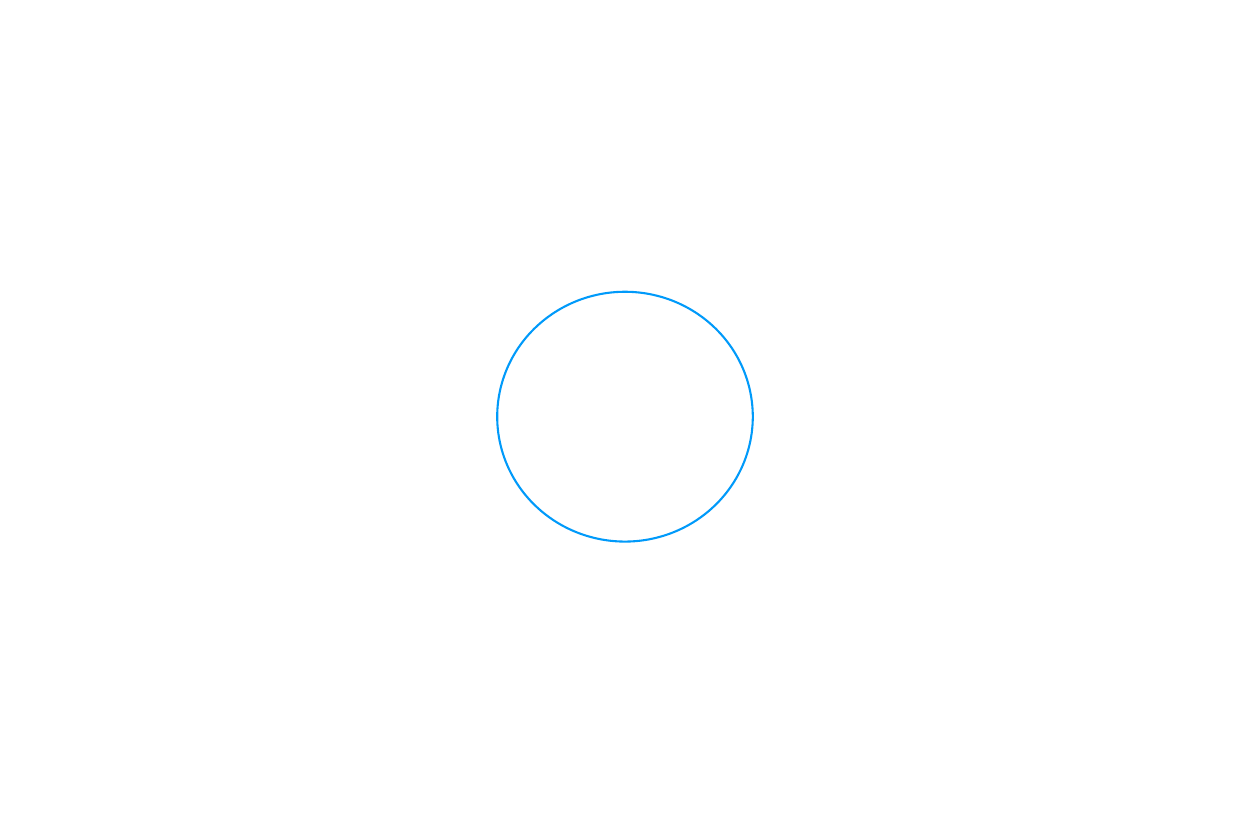}
        $t = 1500\tau$
    \end{minipage}%
    \begin{minipage}{.33\hsize}
        \centering
        \includegraphics[width=\hsize]{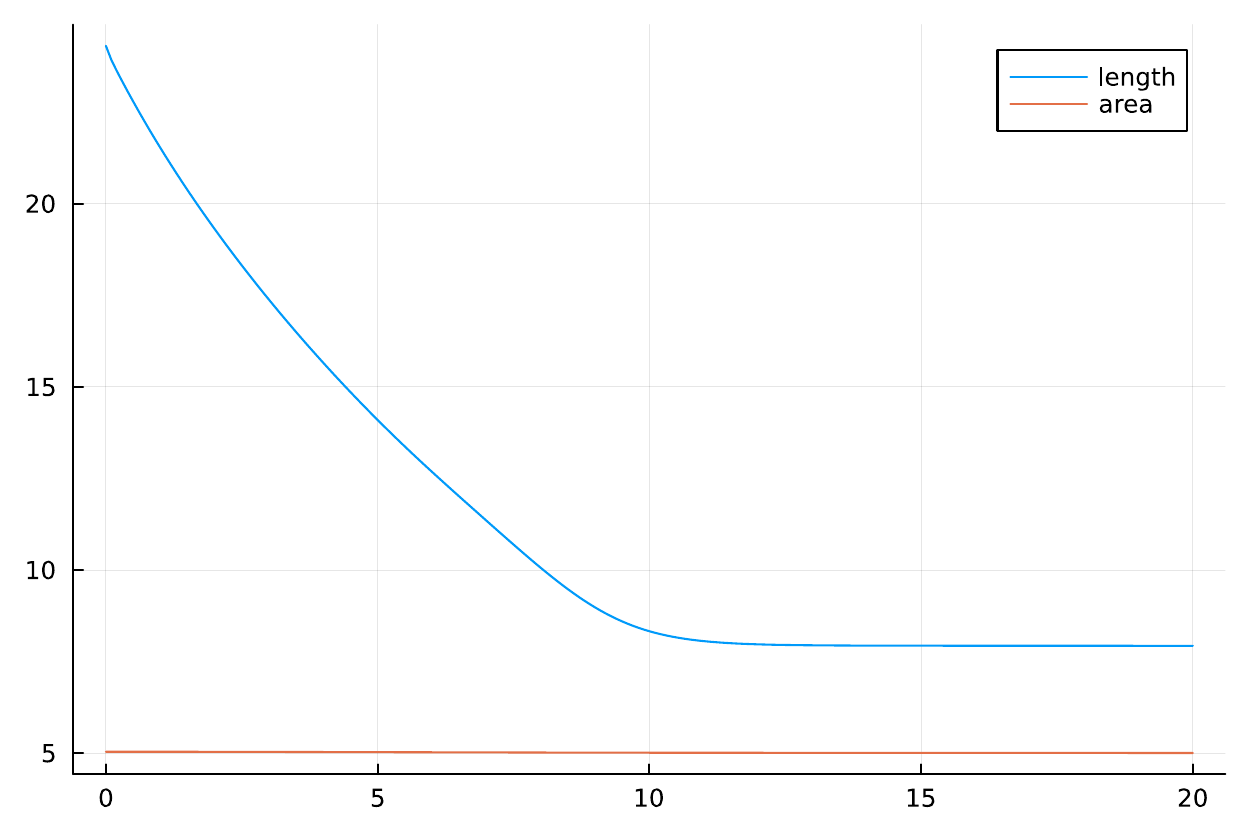}
        Time evolution of length and area
    \end{minipage}
    \caption{Numerical result with initial curve~\eqref{eq:bola}.}
    \label{fig:bola}
\end{figure}

The numerical results are displayed in Figure~\ref{fig:bola}.
As time progresses, the curve contracts gradually without rupture, eventually becomes convex, and converges to a circle with the same area as the initial shape.
The monotonically decreasing length is consistent with Proposition~\ref{prop:CS}, which states that the Langmuir model possesses the curve-shortening property.
Furthermore, since the underlying fluid is incompressible, the area enclosed by the curve is preserved throughout the evolution.
Therefore, the observed results are fully consistent with the theoretical properties of the model.
Similar behavior is also reported in~\cite{alexander2007domain}, suggesting that the proposed numerical scheme is working appropriately.


\bibliographystyle{siam}
\bibliography{Langmuir}
\end{document}